\tikzset{->-/.style={decoration={markings,mark=at position .5 with {\arrow{>}}},postaction={decorate}}}
\newcommand\myshade{85}
\colorlet{mylinkcolor}{violet}
\colorlet{mycitecolor}{red}
\colorlet{myurlcolor}{cyan}
\numberwithin{equation}{section}
\newtheorem{theorem}{Theorem}[section]
\newtheorem{proposition}[theorem]{Proposition}
\newtheorem{proposition-definition}[theorem]{Proposition-Definition}
\newtheorem{corollary}[theorem]{Corollary}
\newtheorem{lemma}[theorem]{Lemma}
\theoremstyle{definition}
\newtheorem{remark}[theorem]{Remark}
\newtheorem{definition}[theorem]{Definition}
\newenvironment{manualtheorem}[1]{%
  \manualtheoreminner
}{\endmanualtheoreminner}
\newcommand{\thick}{\mathsf{thick}}
\newcommand{\Hom}{\mathrm{Hom}}
\newcommand{\Int}{\mathrm{Int}}
\newcommand{\add}{\mathsf{add}}
\newcommand{\za}{\alpha}
\newcommand{\zb}{\beta}
\newcommand{\zD}{\Delta}
\newcommand{\zg}{\gamma}
\newcommand{\proj}{\operatorname{{\rm proj }}}
\newcommand{\calt}{\mathcal{T}}
\newcommand{\cald}{\mathcal{D}}
\newcommand{\calp}{\mathcal{P}}
\newcommand{\calz}{\mathcal{Z}}
\newcommand{\calk}{\mathcal{K}}
\renewcommand{\P}{\mathrm{P}}
\newcommand{\lp}{{}^{\perp}\calp[>\hspace{-3pt}0]}
\newcommand{\rp}{\calp[<\hspace{-3pt}0]^{\perp}}
\def\s{\stackrel}
\definecolor{dark-green}{RGB}{14,150,2}
\definecolor{red}{RGB}{250,0,0}
\newcommand{\gpoint}{\color{dark-green}{\circ}}
\newcommand{\rpoint}{\color{red}{\bullet}}
\newcommand{\sib}{\color{blue}}
\begin{document}

\title[A geometric realization of silting theory for gentle algebras]{A geometric realization of silting theory for gentle algebras}

\author{Wen Chang}
\address{School of Mathematics and Statistics, Shaanxi Normal University, Xi'an 710062, China}
\email{changwen161@163.com}

\author{Sibylle Schroll}
\address{Department of Mathematics, Universit\"at zu K\"oln, Weyertal 86-90, 50931 K\"oln, Germany and
Institutt for matematiske fag, NTNU, N-7491 Trondheim, Norway}
\email{schroll@math.uni-koeln.de}

\keywords{}
\thanks{The first author is supported by Shaanxi Province, Shaanxi Normal University and the NSF of China (Grant 11601295).
The second author was supported by the EPSRC Early Career Fellowship EP/P016294/1.}

\date{\today}

\subjclass[2010]{16E35, 
57M50}

\begin{abstract}
A gentle algebra gives rise to a  dissection of an oriented marked surface with boundary into polygons and  the bounded derived category of the gentle algebra has a geometric interpretation in terms of this surface. In this paper we study  silting theory  in  the bounded derived category of a gentle algebra in terms of its underlying surface. In particular, we show how silting mutation corresponds to the changing of graded arcs and that in some cases silting mutation results in the interpretation of the octahedral axioms in terms of  the flipping of diagonals in a quadrilateral as in the work of
 Dyckerhoff-Kapranov \cite{DK18} in the context of triangulated surfaces. We also show that silting reduction corresponds to the cutting of the underlying surface as is the case for the Calabi-Yau reduction of surface cluster categories as shown by Marsh-Palu \cite{MP14} and Qiu-Zhou \cite{QZ17}.
\end{abstract}

\maketitle
\setcounter{tocdepth}{1}

\tableofcontents

\section*{Introduction}\label{Introductions}

Gentle algebras are classical objects in the representation theory of associative algebras. They were introduced  in the 1980s  as a  generalization of iterated tilted algebras of type $A_n$ \cite{AH81}, and affine type $\widetilde A_n$ \cite{AS87}. Remarkably, they play a role in many other areas of mathematics such as in the context of dimer models \cite{B11, B12}, Lie algebras \cite{HK01}, cluster theory \cite{ABCJP10, BZ11}, discrete derived categories \cite{V03, BPP17} and in connection with homological mirror symmetry \cite{B16, HKK17, BD18, LP20}. In \cite{OPS18, LP20} it is shown that any gentle algebra gives rise to a dissection of an oriented marked surface with boundary, and the objects and morphisms in the derived category of this gentle algebra can been interpreted on the surface model. This geometric interpretation led to the construction of a complete derived invariant \cite{APS19, O19}, building on a geometric interpretation of a derived invariant by Avella-Alaminos and Geiss in \cite{OPS18, LP20}.

Since their inception, gentle algebras  have been a constant object of study and much about their representation theory is known. For example, the indecomposable objects and the Auslander-Reiten sequences in their module category are described in \cite{WW85,AS87,BR87}.
 A geometric model of the  module category of a gentle algebra is given in \cite{BC19}.
The indecomposable objects in their derived category are classified in terms of so-called homotopy string  and homotopy band objects in \cite{BM03} (the terminology homotopy strings and bands is due to \cite{Bob11}). A basis of the morphism space between indecomposable  objects is given  in   \cite{ALP16} in terms of string and band combinatorics. Gentle algebras also have  nice homological properties, for example, they are  Gorenstein \cite{GR05}, the class of gentle algebras is closed under  tilting-cotilting equivalences \cite{S99} and also under derived equivalences \cite{SZ03}.


Silting objects in  triangulated categories were introduced  in \cite{KV88}.
 They play an important role for triangle equivalences, as shown, for example,  in terms of  generalised Morita equivalences in \cite{K98}. Recently, there has been a renewed interest in silting objects  in connection with cluster theory and, in particular, with  $\tau$-tilting theory, see for example the surveys \cite{BY13,H19}.
An important development of silting theory, in particular, in connection or analogy with mutation in cluster theory is made  in \cite{AI12}. In this paper  the notion of silting mutation at silting subcategories of  triangulated categories is introduced, thereby giving a systematic way of constructing new silting subcategories from a given silting subcategory. Furthermore, in analogy with the notion of Calabi-Yau reduction in cluster categories, the notion of a silting reduction of a triangulated category by a pre-silting subcategory is introduced. The connection between Calabi-Yau reduction and silting reduction is made precise  in \cite{IY18, IY20}.

In this paper, we study the silting theory of the derived category of  gentle algebras in terms of their geometric models. More precisely, we show that in analogy with mutation of surface cluster categories,  the silting mutation at a silting subcategory generated by an indecomposable object, is given by changing exactly one graded curve in a dissection of the  surface into polygons. 
In terms of silting reduction we show that in analogy of with Calabi-Yau reduction of surface cluster categories,  the silting reduction at a pre-silting subcategory generated by an indecomposable object associated to a curve on the surface corresponds to cutting the surface along this curve. This connection of mutation in terms of exchanging curves  and reduction in terms of cutting surfaces establishes a geometric analogy  with cluster theory and triangulations of surfaces more generally: Mutation of (surface) cluster algebras corresponds to flipping arcs in an ideal triangulation of a surface \cite{FST08}, the octahedral axiom  in a triangulated category associated with a triangulated surface has been interpreted as flipping the diagonal in a quadrilateral on a triangulated surface \cite{DK18}, and Calabi-Yau reduction of cluster categories has an interpretation in terms of  cutting  surfaces \cite{MP14,QZ17}. Furthermore, flips on decorated marked disks are used in \cite{K08} to construct groupoid structures on pure braid groups and in \cite{Q16,QZ19,KQ20} flips on general decorated marked surfaces appear in the context of 3-Calabi-Yau triangulated categories. Finally, we note that a geometric surface model for discrete derived algebras is given in \cite{B18}, and  mutation is studied in terms of this geometric model.

We now summarise the main results of this paper. For this we briefly set up some notations, which are defined in more detail in Section \ref{Preliminaries}. For a gentle algebra $A$, let $(S,M, \Delta_A)$ be the associated surface of the bounded derived category $D^b(A)$ as constructed in \cite{OPS18} and where $S$ is a compact oriented surface, $M$ a set of marked points both in the boundary and in the interior of $S$ and $\Delta_A$ the admissible dissection of $(S,M)$ induced by $A$.
Then the set of isomorphism classes of indecomposable objects in the bounded homotopy category $K^b(\proj A)$ of complexes of finitely generated projective modules is in bijection with graded curves (up to homotopy) connecting marked points in the boundary, which we refer to as graded arcs, and homotopy classes of certain graded closed curves together with indecomposable representations over  $k[x,x^{-1}]$.
Denote by $\P_\gamma$ the object in $K^b(\proj A)$ corresponding to a graded arc $\gamma$ under this bijection. Recall from \cite{APS19} that any basic silting object in $K^b(\proj A)$ corresponds to an admissible dissection $\Delta$ of graded arcs, where the arcs satisfy certain grading conditions. We denote this silting object by $\P_\Delta$.  In Section \ref{Section:Silting mutations}, given a graded arc $\gamma$ in $\Delta$,  we define the left and right mutation $\mu^\pm_\gamma(\Delta)$ of $\Delta$, which is obtained from $\Delta$ by removing the graded arc $\gamma$ and replacing it with another uniquely defined graded arc in $(S,M)$. Then we show that $\mu^\pm_\gamma(\Delta)$ is again an admissible dissection of $(S,M)$, and that $\P_{\mu^\pm_\gamma(\Delta)}$ is  a silting object in $K^b(\proj A)$. More precisely, we show the following.

\begin{manualtheorem}{1}[Proposition \ref{proposition:compatibility}, Theorem \ref{theorem: compatibilitymutations}]\label{theorem: prelimimary1}
{\it Let $A$ be a gentle algebra with associated surface model $(S,M, \Delta_A)$ and let  $\P_{\zD}$ be a silting object in $ K^b(\proj A)$  with associated graded admissible dissection $\zD$.
 Then for each graded arc $\gamma$ in $\zD$, the object $ \P_{\mu^\pm_\gamma(\Delta)}$   is a silting object in $K^b(\proj A)$.

 Furthermore, $\P_{\mu^\pm_\gamma(\Delta)}$ is the silting mutation of $ \P_\zD$ at the indecomposable direct summand $\P_\gamma$, that is the following  diagram is commutative
    \begin{equation*}
\xymatrix{ \Delta \ar[d]_{}\ar[rr]^{\mu^\pm_{\gamma}}&&
\mu^\pm_{\gamma}\Delta \ar[d]_{}~\\
\P_{\Delta}\ar[u]_{}\ar[rr]^{\mu^\pm_{\P_\gamma}}&&
\P_{\mu^\pm_\gamma(\Delta)}\ar[u]_{}}
\end{equation*}
}
\end{manualtheorem}


As a consequence, we determine when the silting mutation of a tilting object is again a tilting object. We do this by using the combinatorics of the surface in Proposition \ref{proposition:tilting}, and then in Corollary \ref{corollary:tilting} we  restate the result in purely in algebraic terms without reference to the surface.

In order to describe the silting reduction at a pre-silting subcategory, let $\gamma$ be a graded arc in $(S,M,\zD_A)$ such that $\P_\gamma$ is  a pre-silting object in $K^b(\proj A)$. 
Given such an arc $\gamma$, we  cut the surface $(S,M)$ along $\gamma$ and denote by $(S_\gamma, M_\gamma)$ the corresponding cut surface (see Definition \ref{definition:cut surface}).
Now set $\calp = \add(\P_\gamma)$. Following \cite{AI12}, the silting reduction of the category $K^b(\proj A)$ with respect to $\calp$ is the Verdier quotient $\calz_\calp = K^b(\proj A)/\thick\calp$ which is again a triangulated category. Denote by $\overline{\calz_\calp}$  its orbit category with respect to the shift functor. 
In analogy with Calabi-Yau reduction of surface cluster algebras \cite{MP14, QZ17}, the following theorem shows that  $(S_\zg,M_\zg)$ is a geometric realization of  $\overline{\calz_\calp}$.

\begin{manualtheorem}{3}[Theorem \ref{theorem:cutsurface}]\label{thm:prelimilary}
{\it Let $\gamma$ be a graded arc in $(S,M,\zD_A)$ such that $\P_\gamma$ is  a pre-silting object in $K^b(\proj A)$ and set $\calp= \add(P_\gamma)$. 
Then the cut surface $(S_\gamma, M_\gamma)$ is a geometric model of the  orbit category $ \overline{\calz_\calp}$ of the silting reduction of  $K^b(\proj A)$ with respect to $\calp$.   More precisely,
\begin{enumerate}
\item the isomorphism classes of indecomposable objects in $\overline{\calz_\calp}$  are in bijective correspondence with the set of arcs and pairs consisting of a closed curve of winding number zero in $(S_\zg,M_\zg)$ and  a non-zero element in the base field $k$.
\vspace{.2cm}
\item
the dimensions of the morphism spaces in $\overline{\calz_\calp}$ are equal to the number of oriented intersections of the corresponding arcs and  closed curves in $(S_\zg,M_\zg)$.
\end{enumerate}}
\end{manualtheorem}

%

The paper is organised as follows. We begin by recalling some background on marked surfaces, derived categories of gentle algebras, and  silting theory in  Section \ref{Preliminaries}. In Section \ref{section:Distinguished triangles on the surface}, we introduce the notion of a distinguished triangle on the surface, and we prove two key lemmas, which will be frequently used throughout the paper.
In sections \ref{Section:Silting mutations} and   \ref{section:Silting reductions} we respectively study silting mutations and silting reductions in the derived category of gentle algebras as well as their geometric interpretations.

\section*{Acknowledgments}
Wen Chang would like to express his thanks to Yu Qiu and Yu Zhou for useful discussions on marked surfaces and to Dong Yang, Pu Zhang and Bin Zhu for discussions on triangulated categories. Sibylle Schroll would like to thank Claire Amiot and Pierre-Guy Plamandon for initiating the discussion of silting mutations on the surface and Aaron Chan and Hipolito Treffinger for interesting discussions on silting reduction.


\section{Preliminaries}\label{Preliminaries}

 In this paper, all algebras will be assumed to be over a base field $k$ which is algebraically closed. The algebraic closure of $k$ is not a strictly necessary condition, however, it ensures that an  indecomposable $k[x,x^{-1}]$-module  is uniquely determined by its dimension up to isomorphism. This simplifies the correspondence of the oriented intersection numbers and the dimension of the Hom-spaces as stated in subsection \ref{subsection: marked surfaces}. In particular, it allows us to view the `higher' band objects (that is those not sitting at the mouth of a homogeneous tube) as non-primitive graded closed curves.
Arrows in a quiver are composed from left to right as follows: for arrows $a$ and $b$ we write $ab$ for the path from the source of $a$ to the target of $b$.
We adopt the convention that maps are also composed  from left to right, that is if $f: X \to Y$ and $g: Y \to Z$ then $fg : X \to Z$. In general, we consider left modules.
We denote by $\mathbb{Z}$ the set of integer numbers, and by $\mathbb{Z}^*$ the set of non-zero integer numbers.

\subsection{Marked surfaces}\label{subsection: marked surfaces}
In this subsection we recall some concepts about marked surfaces from \cite{OPS18} and \cite{APS19}.
\begin{definition}\cite[Definition 1.7]{APS19}\label{definition:marked surface}
A pair~$(S,M)$ is called a \emph{marked surface} if
\begin{itemize}
 \item $S$ is an oriented surface with non-empty boundary ~$\partial S$;
 \item $M = M^{\gpoint} \cup M^{\rpoint} \cup P^{\rpoint}$ is a finite set of marked points,  where the elements in $M^{\gpoint} \cup M^{\rpoint}$ are in $\partial S$, and the elements in $P^{\rpoint}$, which will be called \emph{punctures}, are in the interior of $S$. Each connected component of~$\partial S$ is required to contain at least one marked point in $M^{\gpoint} \cup M^{\rpoint}$, and the points~$\gpoint$ and~$\rpoint$
       are alternating on  such a component.
       The elements of~$M^{\gpoint}$ and~$M^{\rpoint} \cup P^{\rpoint}$ will be respectively represented by symbols~$\gpoint$ and~$\rpoint$.
\end{itemize}
\end{definition}

On the surface, all curves are considered up to homotopy, and all  intersections of curves are required to be transversal.
 We will call the orientation of the surface the  clockwise orientation, and we call anti-clockwise orientation the opposite orientation. When drawing surfaces in the plane, we will do so that locally,
the orientation of the surface becomes the clockwise orientation of the plane.

\begin{definition}\cite[Definition 1.19]{OPS18}\label{definition:arcs}
Let $(S,M)$ be a marked surface.
\begin{itemize}
 \item An \emph{$\gpoint$-arc} is a non-contractible curve, with endpoints in~$M^{\gpoint}$.
 \item A \emph{loop} is an $\gpoint$-arc which starts and ends at the same $\gpoint$-point.
 \item An \emph{$\rpoint$-arc} is a non-contractible curve, with endpoints in $M^{\rpoint}\cup P^{\rpoint}$.
 \item  An \emph{infinite-arc} is a non-contractible infinite curve which either starts or ends at a $\gpoint$-marked point and wraps around a single puncture in $P^{\rpoint}$ infinitely many times in the anti-clockwise direction or is a curve wrapping such that on either end it wraps infinitely many times around a puncture in $P^{\rpoint}$ in the anti-clockwise direction.
 \item An \emph{admissible arc} is an $\gpoint$-arc or an infinite-arc.
 \item A \emph{closed curve} is a non-contractible curve in the interior of $S$ whose starting point and ending point are equal. A closed curve is called primitive if it is not a non-trivial power of a closed curve in the fundamental group of $S$.
     For a closed curve $\zg$ and a positive integer $n$, denote by $\zg^n$ the closed curve on $S$, which is the $n$-th power of $\zg$ in the fundamental group of $S$.
 \end{itemize}
\end{definition}

For admissible arcs and closed curves in a marked surface $(S,M)$, we now introduce the notion of oriented intersections and  oriented intersection numbers. We will use these to recall in subsection \ref{subsection:derived categories of gentle algebras} some results from \cite{OPS18} on the morphisms in the derived category of a gentle algebra. By abuse of notation we will say that an infinite arc wrapping infinitely many times around a $\rpoint$-puncture is an arc connecting to the corresponding $\rpoint$-point.

\begin{definition}\label{definition:oriented intersections}
For any two admissible arcs or closed curves $\za$ and $\zb$, we denote by $\Int_S(\za,\zb)$ the multiset of \emph{oriented intersections} from $\za$ to $\zb$ in $S$, consisting of

(1) intersections of $\za$ and $\zb$ in the interior of $S$;

(2)  $\rpoint$-punctures with both $\za$ and $\zb$ starting or ending at this puncture;

(3) $\gpoint$-marked points on the boundary, provided both $\za$ and $\zb$ start or end at this point and $\zb$ follows $\za$ in the anti-clockwise orientation at the $\gpoint$-marked points.

Note that in case (3) depending on whether $\za$ and $\zb$ are loops or not and depending on the  anticlockwise order at the $\gpoint$-marked point of the corresponding half-edges, the intersection of $\za$ and $\zb$ at this $\gpoint$-point contributes anything between 0  and  4  elements to $\Int_S(\za,\zb)$.

For example, if $\za$ is a loop while $\zb$ is not, then there are the following three possibilities  for configurations at a boundary intersection $\gpoint$-point $p$, where $_{\bullet}\za$ and $\za_{\bullet}$ represent the two half-edges of $\za$. In the first case (reading left to right), $p$ does not contribute to $\Int_S(\za,\zb)$. In the second case, $p$ contributes once to $\Int_S(\za,\zb)$. In the third case, $p$  contributes twice to $\Int_S(\za,\zb)$.

\begin{center}
{\begin{tikzpicture}[scale=0.5]
	\draw[green] (0,0) circle [radius=0.1];

	\draw[bend left](-.6,-.2)to(.6,-.2);
	\draw[-]  (-.33,-0.1)to(-.6,-.4);
	\draw[-]  (-.1,-.05)to(-.4,-.4);
	\draw[-]  (.1,-.05)to(-.2,-.4);
	\draw[-]  (.3,-.05)to(0,-.4);
	\draw[-]  (.45,-.1)to(0.2,-.4);

    \draw (0,-.8) node {$p$};

    \draw (-1.5,.7) node {$_{\bullet}\za$};
	\draw[](0,0)..controls (-1.2,1) .. (-1,2.8);

    \draw (1.5,.7) node {$\zb$};
	\draw[](0,0)..controls (1.2,1) .. (1,2.8);

	\draw[-]  (0,0)to(0,3.5);
    \draw (.5,1.2) node {$\za_{\bullet}$};
	\end{tikzpicture}}
\quad\quad
{\begin{tikzpicture}[scale=0.5]
	\draw[green] (0,0) circle [radius=0.1];

	\draw[bend left](-.6,-.2)to(.6,-.2);
	\draw[-]  (-.33,-0.1)to(-.6,-.4);
	\draw[-]  (-.1,-.05)to(-.4,-.4);
	\draw[-]  (.1,-.05)to(-.2,-.4);
	\draw[-]  (.3,-.05)to(0,-.4);
	\draw[-]  (.45,-.1)to(0.2,-.4);

    \draw (0,-.8) node {$p$};

    \draw (-1.5,.7) node {$_{\bullet}\za$};
	\draw[](0,0)..controls (-1.2,1) .. (-1,2.8);

    \draw (1.5,.7) node {$\za_{\bullet}$};
	\draw[](0,0)..controls (1.2,1) .. (1,2.8);

	\draw[-]  (0,0)to(0,3.5);
    \draw (.3,3) node {$\zb$};
	\end{tikzpicture}}
\quad\quad
{\begin{tikzpicture}[scale=0.5]
	\draw[green] (0,0) circle [radius=0.1];

	\draw[bend left](-.6,-.2)to(.6,-.2);
	\draw[-]  (-.33,-0.1)to(-.6,-.4);
	\draw[-]  (-.1,-.05)to(-.4,-.4);
	\draw[-]  (.1,-.05)to(-.2,-.4);
	\draw[-]  (.3,-.05)to(0,-.4);
	\draw[-]  (.45,-.1)to(0.2,-.4);

    \draw (0,-.8) node {$p$};

    \draw (-1.5,.7) node {$\zb$};
	\draw[](0,0)..controls (-1.2,1) .. (-1,2.8);

    \draw (1.5,.7) node {$\za_{\bullet}$};
	\draw[](0,0)..controls (1.2,1) .. (1,2.8);

	\draw[-]  (0,0)to(0,3.5);
    \draw (.5,1.2) node {$_{\bullet}\za$};
	\end{tikzpicture}}
\end{center}

Let $\sharp\Int_S(\za,\zb)$ be the number of elements in $\Int_S(\za,\zb)$.
We define the \emph{oriented intersection number} $|\Int_S(\za,\zb)|$ from $\za$ to $\zb$ in $S$ as follows.
$$|\Int_S(\za,\zb)|= \left\{\begin{array}{ll}
				\sharp\Int(\za,\zb) &~\textrm{if}~ \nexists~ {\rpoint} \in \Int_S(\za,\zb)~\textrm{and}~ \za\neq \zb~ \textrm{are not powers}\\
~&~ \textrm{of the same primitive closed curve};\\
                mn\cdot[2 \cdot\sharp\Int(\zg,\zg)+1] &~\textrm{if}~
\za=\zg^m ~\textrm{and}~ \zb=\zg^n~ \textrm{for a primitive closed}\\
~&~ \textrm{curve}~ \zg;\\
                2\cdot\sharp\Int(\za,\za)+1 &~\textrm{if}~ \za=\zb~ \textrm{is a $\gpoint$-arc which is not a loop};\\
                2\cdot\sharp\Int(\za,\za) &~\textrm{if}~ \za=\zb~ \textrm{is a loop};\\
				\infty &~\textrm{if}~ \exists~
{\rpoint} \in \Int_S(\za,\zb).
			\end{array}\right.$$
\end{definition}
Note that the oriented intersection number is infinite if and only if $\za$ and $\zb$ are both infinite arcs wrapping around a common  $\rpoint$-puncture.

\begin{definition}\cite[Definition 1.9]{APS19}\label{definition:addmissable dissections}
A collection of $\gpoint$-arcs~$\{\gamma_1, \ldots, \gamma_r\}$ is \emph{admissible} if the only possible intersections of these arcs are at the endpoints, and each subsurface enclosed by the arcs contains at least one $\rpoint$-point from $M^{\rpoint}\cup P^{\rpoint}$. A maximal admissible collection of~$\gpoint$-arcs is called an \emph{admissible $\gpoint$-dissection}.  The notion of~\emph{admissible $\rpoint$-dissection} is defined in a similar way.
\end{definition}

Denote by $g$ the genus of $S$ and by $b$ the number of connected components of $\partial S$. It is proved in \cite{APS19}, see also \cite{PPP18}, that an admissible collection of~$\gpoint$-arcs is an admissible $\gpoint$-dissection if and only if it contains exactly
 \( |M^{\gpoint}|+|P^{\rpoint}|+b+2g-2 \) arcs if and only if each subsurface enclosed by the arcs is a polygon which contains exactly one $\rpoint$-point from $M^{\rpoint}\cup P^{\rpoint}$.

Given an  admissible $\gpoint$-dissection $\Delta$, it is shown in \cite{APS19}, see also \cite{OPS18, PPP18}, that there exists a unique admissible $\rpoint$-dissection $\Delta^*$ (up to homotopy) such that
 each arc of $\Delta^*$ intersects exactly one arc of $\Delta$.
We call $\Delta^*$ the \emph{dual $\rpoint$-dissection} of $\Delta$.
For a fixed admissible $\gpoint$-dissection $\Delta$ and its dual $\rpoint$-dissection $\Delta^*$, unless otherwise stated, all curves on the surface are always assumed to be in minimal position with respect to both these dissections.

\begin{definition}\cite[Definition 2.10]{OPS18}\cite[Definition 2.4]{APS19}\label{definition:gradings}
A \emph{graded curve} $(\gamma,f)$ is an admissible arc or a closed curve $\gamma$, together with a function
   \[
    f: \gamma \cap \Delta^* \longrightarrow \mathbb{Z},
   \]
  where~$\gamma \cap \Delta^*$ is the totally ordered set of intersection points of~$\gamma$ with~$\Delta^*$, where the order is induced by the direction of $\gamma$.
  The function $f$ is defined as follows:
  If~$p$ and~$q$ are in~$\gamma \cap \Delta^*$ and~$q$ is the direct successor of~$p$,
  then~$\gamma$ enters a polygon enclosed by~$\rpoint$-arcs of~$\Delta^*$ via~$p$ and leaves it via~$q$.
  If the point~$\gpoint$ in this polygon is to the left of~$\gamma$, then~$f(q) = f(p)+1$; otherwise,~$f(q) = f(p)-1$. Furthermore, if $\gamma$ is a closed curve and if $\gamma \cap \Delta^* =
  \{p_0, \ldots, p_n\}$, where the index is considered module $n$, then $f$ needs to satisfy
  $f(p_0) = f(p_{n+1})$.
\end{definition}

Note that if $\zg$ is an admissible arc, then there always exists a grading, while if $\zg$ is a closed curve, then there may not exist a grading  of $\zg$ since there might not be a function $f$ satisfying $f(p_0) = f(p_{n+1})$.  In the geometric language of  winding numbers (see \cite[Section 1]{APS19} or \cite{HKK17, LP20}), a closed curve is gradable if and only if its winding number  with respect to the admissible dissection is zero. We also note that any grading on $\zg$ is completely determined by its value on a single element in $\gamma \cap \Delta^*$. If a grading $f$ on $\zg$ exists, then the map $f[n]: l \rightarrow f(l)-n$ with $n\in \mathbb{Z}$ is also a grading on $\zg$, and all gradings on $\zg$ are of this form.
We call  $[1]$ the \emph{shift} of the grading $f$.
As we will see in  Theorem \ref{theorem:object and map in derived category2} (1), the grading shift on curves corresponds to the shift functor in the derived category.

\begin{lemma}\label{lemma:pre}
Let $\zg$ be a primitive closed curve, and let $n$ be any positive integer.
Then $\zg$ is gradable if and only if its power $\zg^n$ is gradable.
Moreover, there is a one-to-one correspondence between the gradings $f$ on $\zg$ and the gradings $f^n$ on $\zg^n$ with $f^n[1]=f[1]^n$.
\end{lemma}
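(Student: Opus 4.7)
The plan is to exploit the purely local nature of the increment/decrement rule in Definition~\ref{definition:gradings}: whether $f(q) = f(p) \pm 1$ is decided inside a single polygon of $\Delta^*$, and this local data is identical on each of the $n$ traversals of $\gamma$ that together constitute $\gamma^n$. First I would fix an ordering $\gamma \cap \Delta^* = \{p_0, \ldots, p_{k-1}\}$ along the direction of $\gamma$, and then identify $\gamma^n \cap \Delta^*$ with $\{p_i^{(j)} : 0 \le i \le k-1,\ 1 \le j \le n\}$, ordered lexicographically by $(j,i)$, where $p_i^{(j)}$ records the $j$-th visit of $\gamma^n$ to the point $p_i$.

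The gradability statement is then handled through a winding integer $w(\gamma) \in \mathbb{Z}$: starting from any value at $p_0$ and applying the local $\pm 1$ rule step by step around one full loop, one returns to $p_0$ with a total offset $w(\gamma)$ that depends only on $\gamma$. By construction, $\gamma$ is gradable if and only if $w(\gamma) = 0$, and the same procedure applied to $\gamma^n$ yields $w(\gamma^n) = n\,w(\gamma)$, since the local $\pm 1$ contributions repeat identically across the $n$ traversals. Because $n$ is a positive integer, $n\,w(\gamma) = 0$ if and only if $w(\gamma) = 0$, which gives the first assertion.

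For the bijection, given a grading $f$ of $\gamma$, I would define $f^n(p_i^{(j)}) := f(p_i)$. This preserves the local rule within each traversal because $f$ does, and across a seam $p_{k-1}^{(j)} \to p_0^{(j+1)}$ it is guaranteed by the closed-curve identity for $f$; hence $f^n$ is a grading of $\gamma^n$. Conversely, any grading $g$ of $\gamma^n$ must satisfy $g(p_i^{(j+1)}) - g(p_i^{(j)}) = w(\gamma)$ for all $i,j$ by the local rule, and the closed-curve condition on $g$ then forces this common value to be zero; so $g$ descends to a well-defined grading of $\gamma$ via $f(p_i) := g(p_i^{(1)})$, and the two constructions are mutually inverse. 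The compatibility $f^n[1] = f[1]^n$ is then immediate from the definitions, since both sides send $p_i^{(j)}$ to $f(p_i) - 1$.

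The main obstacle is bookkeeping rather than conceptual. The $n$ distinct elements $p_i^{(1)}, \ldots, p_i^{(n)}$ of $\gamma^n \cap \Delta^*$ all sit over the same physical point of $S$, and one has to keep them carefully separated, particularly when verifying the closed-curve condition at the seams between successive traversals. Once this indexing is pinned down, the proof reduces to the two clean observations above: additivity of $w$ under powers, and the $j$-independence forced by the local rule combined with the closed-curve condition.
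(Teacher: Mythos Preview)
Your proposal is correct and follows essentially the same construction as the paper: both define $f^n(p_i^{(j)}) = f(p_i)$ and observe that this respects the local $\pm 1$ rule and the closed-curve condition. Your treatment is in fact more complete than the paper's, which only writes out the forward direction $f \mapsto f^n$ and asserts the bijection; your explicit use of the winding integer $w(\gamma)$ and the identity $w(\gamma^n) = n\,w(\gamma)$ supplies the converse direction (that any grading of $\gamma^n$ is $j$-independent and hence descends to $\gamma$) that the paper leaves implicit.
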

\begin{proof}
Denote by $\gamma \cap \Delta^* = \{p_0, \ldots, p_m\}$, where $p_i=\gamma \cap \gamma_i^*$ for some initial $\rpoint$-arc $\gamma_i^*$ in $\Delta^*, 0 \leq i \leq m$. Then we may assume $\gamma^n \cap \Delta^* = \{p_0^{1}, \ldots, p_m^{1}, \ldots, p_0^{n}, \ldots, p_m^{n}\}$, where $p_i^j=\gamma^n \cap \gamma_i^*$ for any $0\leq i\leq m, 1\leq j\leq n$.
Suppose $\zg$ is gradable and let $f: \gamma \cap \Delta^* \longrightarrow \mathbb{Z}$ be a grading on it. Let
   \[
    f^n: \gamma^n \cap \Delta^* \longrightarrow \mathbb{Z}
   \]
be the function such that $f^n(p_i^j)=f(p_i)$ for any $0\leq i\leq m, 1\leq j\leq n$. Then note that $f^n$ is a well-defined grading on $\zg^n$, and this gives a one-to-one correspondence between the gradings on $\zg$ and the gradings on $\zg^n$. Furthermore, it directly follows from the definitions that  $f^n[1]=f[1]^n$.
\end{proof}

\subsection{Derived categories}\label{subsection:derived categories of gentle algebras}
We recall in this section the derived category of a gentle algebra, and its geometric realization given in \cite{OPS18} using a graded marked surface.

For a finite dimensional algebra $A$, it is well known that the (bounded) derived category $\cald^b(A)$ is triangle equivalent to the homotopy category $K^{-,b}(\proj A)$ of complexes of projective $A$-modules bounded on the right and bounded in homology. In the following, we will not distinguish these two categories, and view the perfect derived category $K^b(\proj A)$ as a full subcategory of $\cald^b(A)$. The derived category of a gentle algebra is well-studied. In particular, the authors in \cite{BM03} classified the indecomposable objects in the category in terms of  \emph{(homotopy) string objects} and \emph{(homotopy) band objects}.
The morphisms between these objects are explicitly described in \cite{ALP16}.

More recently, a one-to-one correspondence between graded marked surfaces \sloppy $(S,M,\zD_A)$ and gentle algebras $A$ has been established  in \cite{OPS18,PPP18}, where $\zD_A$ is the admissible dissection uniquely determined by $A$.
A complete description of the indecomposable objects in $\cald^b(A)$ was given in terms of graded curves in \cite[Theorem 2.12]{OPS18}, and the morphisms are described as graded oriented intersections in \cite[Theorem 3.3]{OPS18}.  We adopt the notation from \cite{OPS18}, where we denote the indecomposable object corresponding to a graded curve $(\zg, f)$ by $\P_{(\zg,f)}$. The precise correspondence is recalled below. Note, however,  that in \cite{OPS18} all closed curves considered were assumed to be primitive. In contrast, in this paper, we allow closed curves which are not necessarily primitive. This has the advantage that we can directly relate intersections of curves corresponding to `higher' band objects with  the dimensions of their morphism spaces. With this in mind we recall the following results.

\begin{theorem}\label{theorem:object and map in derived category}\cite[Theorem 2.12]{OPS18}
Let $A$ be a gentle algebra associated with a graded marked surface
$(S,M,\Delta_A)$. Then
\begin{itemize}
 \item[(1)] the isomorphism classes of  indecomposable string objects $\P_{(\zg,f)}$ in $\cald^b(A)$ are in bijection with graded admissible arcs $(\zg,f)$ on $S$;
 \item[(2)] the isomorphism classes of indecomposable band objects $\P_{(\zg,f,\lambda)}$ in $\cald^b(A)$ are in bijection with triples $(\zg,f,\lambda)$, where $(\zg,f)$ is a graded closed curve on $S$ and $\lambda \in k^*$;
  \item[(3)] under the correspondences in (1) and (2), the indecomposable objects in the perfect category $K^b(\proj A)$ correspond to the graded $\gpoint$-arcs and graded closed curves.
   \end{itemize}
\end{theorem}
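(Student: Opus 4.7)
The plan is to reduce the statement to the classification result of \cite{OPS18} and then extend it to cover the non-primitive case, using Lemma~\ref{lemma:pre} as the bridge.

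First, I would recall that by \cite{BM03} the indecomposable objects of $\cald^b(A)$ for a gentle algebra $A$ fall into two disjoint families: homotopy string objects (parametrised by homotopy strings, i.e.\ walks in a certain combinatorial datum attached to $A$ up to inversion) and homotopy band objects (parametrised by a homotopy band $\zb$ up to cyclic rotation and inversion, together with an indecomposable $k[x,x^{-1}]$-module, which since $k$ is algebraically closed is determined by $\zl\in k^*$ and a Jordan block size $n\geq 1$). For part (1), I would invoke the dictionary of \cite{OPS18,PPP18}: the combinatorial data of $A$ is encoded by the admissible dissection $\zD_A$, so a homotopy string is geometrically a concatenation of arcs $\zg_i^*\in \zD_A^*$ read off as $\zg$ traverses the polygons of $\zD_A^*$. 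The grading $f$ of Definition~\ref{definition:gradings} is then precisely the placement of each crossing in a cohomological degree: the rule $f(q)=f(p)\pm 1$ records whether the next projective in the complex is obtained by going up or down in the string, i.e.\ via a formal inverse or a direct arrow. Two graded arcs related by orientation reversal give the same string object, while shifting the grading function corresponds exactly to the shift functor, so the bijection is well-defined.

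For part (2), I would follow the same geometric dictionary for bands: a homotopy band $\zb$ corresponds to a primitive closed curve $\zg$, and its gradings $f$ are the integer-valued functions on $\zg\cap \zD^*$ satisfying the cyclic compatibility $f(p_0)=f(p_{n+1})$, which by \cite[Section 1]{APS19} exist iff the winding number of $\zg$ vanishes. This is exactly the known primitive case of \cite[Theorem~2.12]{OPS18}. The new ingredient is that one must also account for the Jordan block parameter $n\geq 2$. Here I would use Lemma~\ref{lemma:pre}: the primitive graded closed curve $(\zg,f)$ with Jordan parameter $(\zl,n)$ is reinterpreted as the non-primitive graded closed curve $(\zg^n,f^n)$ with parameter $\zl$, and Lemma~\ref{lemma:pre} guarantees that the gradings transport bijectively. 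This yields the desired bijection with triples $(\zg,f,\zl)$ where $\zg$ is now allowed to be any (not necessarily primitive) graded closed curve.

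For part (3), I would characterise perfect objects within $\cald^b(A)$ via the classical criterion: a string object is perfect iff the corresponding homotopy string contains no infinite tail, which geometrically translates to $\zg$ having both endpoints on the boundary, i.e.\ $\zg$ being a $\gpoint$-arc rather than an infinite-arc; band objects are automatically perfect since their complexes are finite and periodic up to shift, and these correspond to graded closed curves. Combining these, the indecomposables in $K^b(\proj A)$ are precisely the graded $\gpoint$-arcs together with graded closed curves (with scalar parameter in $k^*$).

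The main obstacle is the extension to non-primitive curves in part (2). The subtlety is that while Lemma~\ref{lemma:pre} gives a tidy bijection of gradings, one must check that under the dictionary of \cite{OPS18} the complex associated to $\zg^n$ with scalar $\zl$ agrees up to isomorphism with the band object associated to $\zg$ with the indecomposable $k[x,x^{-1}]$-module given by the Jordan block $J_n(\zl)$; this requires unpacking the explicit complex constructed from a homotopy band to see that replicating the band $n$ times and using the scalar $\zl$ recovers the higher band object.
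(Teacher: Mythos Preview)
Your approach is essentially the same as the paper's: reduce (1) directly to \cite[Theorem~2.12]{OPS18}, handle (2) by starting from the primitive case in \cite{OPS18} and using Lemma~\ref{lemma:pre} together with the Jordan form over $k[x,x^{-1}]$ to absorb the dimension parameter into a power of the curve, and deduce (3) from the characterisation of perfect string objects as those not involving infinite arcs.

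One remark on the ``obstacle'' you flag at the end: it is not actually there. The paper does not construct an independent complex from the non-primitive curve $\zg^n$ and then compare it with the band object for $(\zg,J_n(\zl))$; rather, the symbol $\P_{(\zg^n,f^n,\zl)}$ is \emph{defined} to be $\P_{(\zg,f,M)}$ where $M$ is the $n$-dimensional indecomposable $k[x,x^{-1}]$-module with eigenvalue $\zl$. The content of part (2) is purely a reparametrisation of the index set, and the only thing to check is that this relabelling is bijective, which is exactly what Lemma~\ref{lemma:pre} (bijection of gradings) together with the algebraic closure of $k$ (uniqueness of the Jordan block) provide. So your unpacking of the explicit complex is unnecessary.
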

\begin{proof}
Statement (1) correspond to \cite[Theorem 2.12]{OPS18} (1).

For the statement (2), by \cite[Theorem 2.12]{OPS18} (2), the isomorphism classes of the indecomposable band objects $\P_{(\zg,f,M)}$ in $\cald^b(A)$ are in bijection with  triples $(\zg,f,M)$, where $(\zg,f)$ is a graded closed curve on $S$ with $\zg$ primitive and $M$ is an isomorphism class of indecomposable $k[x]$-module.  Note that since $k$ is algebraically closed, for any positive integer $n$ and any non-zero $\lambda \in k$, there exists, up to isomorphism, only one indecomposable $n$-dimensional $k[x,x^{-1}]$-module with eigenvalue $\lambda$. On the other hand, Lemma \ref{lemma:pre} shows that any grading $f$ over a primitive closed curve $\zg$ naturally induces a grading $f^n$ over $\zg^n$, and this gives a one-to-one correspondence  between the gradings on $\zg$ and the gradings on $\zg^n$.
Therefore any indecomposable band object $\P_{(\zg,f,M)}$ can be parameterized by a triple $(\zg^n, f^n, \lambda)$, where $(\zg^n, f^n)$ is a graded closed curve, $n$ is the dimension of the indecomposable $k[x,x^{-1}]$-module $M$ whose eigenvalue is $\lambda$.  So we have the stated bijection.

Statement (3) is true because an object in $\cald^b(A)$ is of infinite global dimension if and only if it is of the form $\P_{(\zg,f)}$ with $\zg$ an infinite arc.
\end{proof}

\begin{remark}
We adapt the convention that  if  a result or proof does not depend on the scalar $\lambda$, we will omit it  and we will write $\P_{(\zg,f)}$ instead of $\P_{(\zg,f,\lambda)}$.
\end{remark}

Now we recall how the oriented intersection number of graded curves relates  to  the dimension of the morphism space between the associated indecomposable objects.
Since the results in \cite{OPS18} were given for primitive closed curves, we give a short proof to show that they can be extended to the set-up in this paper, where we also consider non-primitive closed curves.

Before giving the precise statement of the correspondence of morphisms and intersections in the theorem below, let us recall the following from \cite{OPS18}. Each intersection of two curves on the boundary (i.e. at a $\gpoint$-marked point) gives rise to exactly one morphism between the corresponding indecomposable objects, while each intersection in the interior (not at a $\rpoint$-puncture) of $S$ gives rise to two morphisms.
More precisely, let $\P_{(\zg_1,f_1)}$ and $\P_{(\zg_2,f_2)}$ be two indecomposable objects in $\cald^b(A)$ corresponding to admissible arcs or primitive closed curves $\zg_1$ and $\zg_2$, where $f_1$ and $f_2$ are their respective gradings.

Assume first that $\zg_1$ and $\zg_2$ intersect at a $\gpoint$-point $p$ on the boundary as depicted in Figure~\ref{boundary intersection}, where for $i\in \{1,2\}$, $q_i$ is the intersection in $\gamma_i \cap \Delta_A^*$ which is nearest to $p$.
If $f_1(q_1) = f_2(q_2)$, then there is a morphism from $\P_{(\zg_1,f_1)}$ to $\P_{(\zg_2,f_2)}$. Note that there is  no morphism from $\P_{(\zg_2,f_2)}$ to $\P_{(\zg_1,f_1)}[j]$ for any $j\in \mathbb{Z}$ arising from this intersection at $p$.

	\begin{center}
	\begin{figure}[H]
	{\begin{tikzpicture}[scale=0.5]
	\draw[green] (0,0) circle [radius=0.1];
	\draw[](0.1,0)to(4.9,0);
	\draw[red](4.3,-0.5)to(4.6,0.5);
	\draw (2.5,.5) node {$\zg_2$};	
	\draw[](0.1,-0.05)to(4.93,-2.93);
	\draw[red](4.3,-0.5-2.6)to(4.8,0.5-2.9);
	\draw (2.5,-2.2) node {$\zg_1$};	
	\draw (4.5,-2.2) node {$q_1$};
	\draw (5,.5) node {$q_2$};	
	\draw (-1,0) node {$p$};			
	\draw[bend right](-.2,-.6)to(-.2,.6);
	\draw[-]  (-0.1,-.33)to(-.4,-.6);
	\draw[-]  (-.05,-.1)to(-.4,-.4);
	\draw[-]  (-.05,.1)to(-.4,-.2);
	\draw[-]  (-.05,.3)to(-.4,0);
	\draw[-]  (-.1,.45)to(-.4,0.2);

	\end{tikzpicture}}
	\caption{}~\label{boundary intersection}
	\end{figure}
	\end{center}

Assume now that $\zg_1$ and $\zg_2$ intersect at some point $p$ in the interior of the surface, which is not a $\rpoint$-puncture.  Suppose that we have the following local configuration in $S$ depicted in  Figure~\ref{interior intersection} below.
If $f_1(q_1) = f_2(q_2)$, then there
is one morphism from $\P_{(\zg_1,f_1)}$ to $\P_{(\zg_2,f_2)}$, and one morphism from $\P_{(\zg_2,f_2)}$ to $\P_{(\zg_1,f_1)}[1]$ (or to $\P_{(\zg_1,f_1)}[-1]$ depending on the position of the corresponding $\gpoint$-marked point).
	
	\begin{center}
	\begin{figure}[H]
		{\begin{tikzpicture}[scale=0.5]
			\draw[-] (0.15,0.15) -- (4.85,4.85);
			\draw[-] (0.15,4.85) -- (4.85,0.15);

			\draw[-,red] (3.5,0.8) -- (4.8,1);
			\draw[-,red] (3.9,4.8) -- (5.1,4.5);
			
			\draw (-.2,5.2) node {$\zg_1$};
			\draw (-.2,-.2) node {$\zg_2$};
			
			\draw (4.6,5.2) node {$q_2$};
			\draw (4.1,0.4) node {$q_1$};
			\draw (2.5,2) node {$p$};
			\end{tikzpicture}}
			\caption{}\label{interior intersection}
			\end{figure}
	\end{center}

\begin{theorem}\cite[Theorem B]{ALP16}\cite[Theorem 3.3]{OPS18}\label{theorem:object and map in derived category2}
Let $A$ be a gentle algebra associated with a graded marked surface
$(S,M,\Delta_A)$. Then
\begin{itemize}
  \item[(1)] the shift of any indecomposable object corresponds to the shift of the corresponding graded curve, that is
     $$\P_{(\zg,f)}[1]=\P_{(\zg,f[1])},$$
     where $(\zg,f)$ is a graded admissible arc or a graded closed curve.
  \item[(2)] for any two indecomposable objects $\P_{(\zg_1,f_1)}$ and $\P_{(\zg_2,f_2)}$, if we denote $$\underset{i=-\infty}{\overset{\infty}{\bigoplus}}
     \Hom_{\cald^b(A)}(\P_{(\zg_1,f_1)},\P_{(\zg_2,f_2)}[i])$$
  by
  $$\Hom_{\cald^b(A)}^{\bullet}(\P_{(\zg_1,f_1)},\P_{(\zg_2,f_2)}),$$ then
  $$\dim \Hom_{\cald^b(A)}^{\bullet}(\P_{(\zg_1,f_1)},\P_{(\zg_2,f_2)}) =|\Int_S(\zg_1,\zg_2)| .$$
   \end{itemize}
\end{theorem}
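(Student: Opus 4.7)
For part (1), I would argue directly from Definition~\ref{definition:gradings}: the explicit construction of the complex $\P_{(\zg,f)}$ (as a homotopy string or band object, following \cite{BM03,ALP16,OPS18}) places the indecomposable summand associated with a point $p\in \zg\cap\Delta^*$ in cohomological degree recorded by $f(p)$, so replacing $f$ by $f[1]$ (which shifts every value by $-1$) amounts to the cohomological shift of the complex. Unfolding the OPS construction makes $\P_{(\zg,f[1])}\cong \P_{(\zg,f)}[1]$ immediate, from which the result follows.

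For part (2), I would reduce to the results already in the literature and then upgrade them with Lemma~\ref{lemma:pre}. \emph{Case I:} both graded curves are arcs or primitive closed curves. Here the statement is exactly \cite[Theorem B]{ALP16}, translated into the surface language as in \cite[Theorem 3.3]{OPS18}, so there is nothing to prove. \emph{Case II:} at least one of $\zg_1,\zg_2$ is a non-primitive power, say $\zg_1=\zg^m$, $\zg_2=\zg^n$ for a common primitive closed curve $\zg$. By Lemma~\ref{lemma:pre} and the bijection in Theorem~\ref{theorem:object and map in derived category}(2), these objects are the classical band objects $B(\zg,f,M_{m,\lambda_1})$ and $B(\zg,f,M_{n,\lambda_2})$, where $M_{r,\lambda}$ is the unique $r$-dimensional indecomposable $k[x,x^{-1}]$-module with eigenvalue $\lambda$. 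The Hom-computation of \cite{ALP16} expresses $\Hom^\bullet(B(\zg,f,M),B(\zg,f,N))$ as a direct sum: a contribution of $2\,\dim\Hom_{k[x,x^{-1}]}(M,N)$ attached to each of the $\sharp\Int(\zg,\zg)$ self-crossings of $\zg$ (two-sided, reflecting the interior-intersection rule) plus one further copy of $\dim\Hom_{k[x,x^{-1}]}(M,N)$ coming from the wrap around the band. A direct computation then gives the required total $mn[2\sharp\Int(\zg,\zg)+1]$ when $\lambda_1=\lambda_2$ and $0$ otherwise.

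On the geometric side, I would verify in parallel that $\sharp\Int(\zg^m,\zg^n) = mn\sharp\Int(\zg,\zg)$ (each transversal self-crossing of $\zg$ produces $mn$ crossings of $\zg^m$ against $\zg^n$, obtained by pairing copies), so that the left-hand side of Definition~\ref{definition:oriented intersections} in this case indeed reads $mn[2\sharp\Int(\zg,\zg)+1]$. Matching eigenvalues then ensures agreement with the algebraic count. The \emph{main obstacle} I expect is the bookkeeping of this last identification: carefully tracking how the pairing of copies in $\zg^m$ versus $\zg^n$, together with the grading lift of Lemma~\ref{lemma:pre}, respects the grading hypothesis $f_1(q_1)=f_2(q_2)$ at each crossing and matches one-for-one with the basis of $\Hom^\bullet$ produced by the string/band calculus of \cite{ALP16}, and in particular verifying the extra $+mn$ contribution and the eigenvalue vanishing intrinsically from the topological picture.
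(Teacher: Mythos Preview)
Your plan for part~(1) is more direct than the paper's argument and is perfectly fine: the paper instead cites \cite[Theorem~3.3]{OPS18} for arcs and primitive closed curves and then, for a non-primitive $(\zg,f)=({\zg'}^n,{f'}^n)$, argues via the homogeneous tube containing $\P_{(\zg',f')}$ together with the identity $f'^n[1]=f'[1]^n$ from Lemma~\ref{lemma:pre}. Your construction-level argument avoids this detour.

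For part~(2), however, your case split is incomplete. Your Case~II begins ``at least one of $\zg_1,\zg_2$ is a non-primitive power'' but then immediately specializes to ``$\zg_1=\zg^m$, $\zg_2=\zg^n$ for a \emph{common} primitive closed curve $\zg$''. This omits two situations that are not covered by your Case~I either: (a) one of the curves is an admissible arc and the other is a non-primitive closed curve; (b) $\zg_1={\zg'_1}^m$ and $\zg_2={\zg'_2}^n$ are powers of \emph{distinct} primitive closed curves with $m>1$ or $n>1$. In both of these the relevant clause of Definition~\ref{definition:oriented intersections} is the first one, $|\Int_S(\zg_1,\zg_2)|=\sharp\Int(\zg_1,\zg_2)$, and one needs the elementary count $\sharp\Int(\zg_1,\zg_2)=mn\cdot\sharp\Int(\zg'_1,\zg'_2)$ together with the scaling formula $\dim\Hom_{\cald^b(A)}(\P_{(\zg_1,f_1)},\P_{(\zg_2,f_2)})=mn\cdot\dim\Hom_{\cald^b(A)}(\P_{(\zg'_1,f'_1)},\P_{(\zg'_2,f'_2)})$ from \cite[Theorem~B]{ALP16}. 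This is exactly the subcase ``$\zg'_1\neq\zg'_2$'' that the paper isolates and treats separately; without it your argument does not close.

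A smaller point: your eigenvalue discussion (``$0$ otherwise'') is extraneous to the statement as formulated, since the paper's convention (see the Remark after Theorem~\ref{theorem:object and map in derived category}) is to suppress $\lambda$ precisely when the result does not depend on it; you should not build a case distinction on it here.
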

\begin{proof}
Statement (1) corresponds \cite[Theorem 3.3]{OPS18} if $(\zg,f)$ is a graded admissible arc or a primitive graded closed curve.
Now let $(\zg,f)=({\zg'}^n,{f'}^n)$ for some primitive graded closed curve $(\zg',f')$. On the one hand, note that $\P_{(\zg,f)}$ and $\P_{(\zg',f')}$ are in the same homogenous tube of $\cald^b(A)$, thus $\P_{(\zg,f)}[1]$ and $\P_{(\zg',f')}[1]=\P_{(\zg',f'[1])}$ are in the same homogenous tube. On the other hand, $\P_{(\zg',f'[1])}$ and $\P_{({\zg'}^n,{f'[1]}^n)}$ are in the same homogenous tube. Furthermore, $f[1]=f'^n[1]={f'[1]}^n$ by Lemma \ref{lemma:pre}, so $\P_{({\zg'}^n,{f'}^n[1])}=\P_{(\zg,f[1])}$.
Therefore $\P_{({\zg},{f})}[1]$ and $\P_{(\zg,f[1])}$ are in the same homogenous tube with the same height. So they coincide, that is $\P_{({\zg},{f})}[1]=\P_{(\zg,f[1])}$.

For  statement (2), we have the following two cases.

Case 1. If both $\zg_1$ and $\zg_2$ are admissible arcs or primitive closed curves, then the statement corresponds to \cite[Theorem 3.3]{OPS18} (1), noticing that when $\zg=\zg_1=\zg_2$, an interior self-intersection  gives rise to two maps from $\P_{(\zg,f_\zg)}$ to $\P_{(\zg,f_\zg)}[\mathbb{Z}]$ for any grading $f_\zg$, and the identity map on $\P_{(\zg,f_\zg)}$ does not correspond an intersection.

Case 2. Suppose that $\zg_1$ and $\zg_2$ are admissible arcs or gradable closed curves, where at least one is a gradable non-primitive closed curve. Without loss of generality assume that $\zg_1$ is such that  $\zg_1={\zg'_1}^m$, for ${\zg'_1}$  a primitive closed curve and $m >1$ and let  $\zg_2={\zg'_2}^n$, where  ${\zg'_2}$ is an admissible arc or a primitive closed curve and $n \geq 1$.
Note that here for convenience, when $\zg_2$ is an admissible arc, we also write it as ${\zg'_2}^n$, where $\zg'_2=\zg_2$ and $n=1$.
Then we have the following two subcases.

If ${\zg'}_1\neq {\zg'}_2$, then on the one hand, $$|\Int_S(\zg_1,\zg_2)|=mn\cdot |\Int_S(\zg'_1,\zg'_2)|,$$
by the definition of the oriented intersection number and the fact that ${\zg'}_1\neq {\zg'}_2$. On the other hand, \cite[Theorem B]{ALP16} says that we have
$$\dim \Hom_{\cald^b(A)}(\P_{(\zg_1,f_1)},\P_{(\zg_2,f_2)})=
mn \cdot \dim \Hom_{\cald^b(A)}(\P_{(\zg'_1,f'_1)},\P_{(\zg'_2,f'_2)}),$$
where $f'_1$ and $f'_2$ are the gradings on $\zg'_1$ and $\zg'_2$ respectively such that $f_1={f'_1}^m$ and $f_2={f'_2}^m$. Therefore we have the wanted equality.

Let ${\zg'}_1={\zg'}_2$, then the statement corresponds to\cite[Theorem 3.3]{OPS18} (2) if $m=n=1$. Otherwise, note that in Definition \ref{definition:oriented intersections}, we define $|\Int_S(\zg_1,\zg_2)|$ as $mn\cdot |\Int_S(\zg'_1,\zg'_2)+1|$, which equals to
$$mn \cdot \dim \Hom_{\cald^b(A)}(\P_{(\zg'_1,f'_1)},\P_{(\zg'_2,f'_2)}).$$
Then by combining the equality
$$\dim \Hom_{\cald^b(A)}(\P_{(\zg_1,f_1)},\P_{(\zg_2,f_2)})=
mn \cdot \dim \Hom_{\cald^b(A)}(\P_{(\zg'_1,f'_1)},\P_{(\zg'_2,f'_2)}),$$
from \cite[Theorem B]{ALP16}, we have the wanted equality.

To sum up, for all the cases, we have proved
  $$\dim \Hom_{\cald^b(A)}^{\bullet}(\P_{(\zg_1,f_1)},\P_{(\zg_2,f_2)}) =|\Int_S(\zg_1,\zg_2)| .$$
\end{proof}

\begin{remark}\label{remark:object and map in derived category2}
Given two graded curves, then adding the identity map if it exists (that is in the case of endomorphisms), all the maps arising from the graded oriented intersections, give a  $k$-basis of the morphism space between the corresponding indecomposable objects, if the curves are not powers of the same primitive closed curve. In contrast, if two curves are powers of the same primitive closed curve, then this does not hold anymore. For example, if $(\zg, f)$ is graded closed curve with $\zg$ primitive and there is no self-intersection on $\zg$, then there exists no intersections between $\zg$ and any power $\zg^n$. However, by \cite[Theorem B]{ALP16}, the dimensions of the morphism spaces between $\P_{(\zg,f)}$ and $\P_{(\zg^n,f^n)}$ are both $n$.
\end{remark}

\subsection{Silting objects}\label{subsection: silting objects}
In the following sections, we recall some background on the silting theory of a general triangulated category and for the bounded derived categories of gentle algebras, in particular.

Let $\calt$ be a triangulated category.
We call a full subcategory $\calp$ in $\calt$ \emph{pre-silting} if $\Hom_{\calt}(\calp,\calp[i])=0$ for all $i>0$. It is \emph{silting} if in addition $\calt=\thick\calp$. An object $P$ of $\calt$ is said to be \emph{pre-silting} if $\add P$ is a pre-silting subcategory and \emph{silting} if $\add P$ is a silting subcategory. We always assume that pre-silting objects as well as silting objects are basic.

Note that for any gentle algebra $A$, the category $K^b(\proj A)$ always has silting objects, while the derived category $\cald^b(A)$ contains silting objects if and only if $gl.dim A < \infty$, that is, $\cald^b(A)$ contains silting objects exactly if it is  triangle equivalent to $K^b(\proj A)$. The silting objects in $K^b(\proj A)$ are completely described as follows.
\begin{theorem}\cite[Theorem 3.2]{APS19}\label{theo::siltingObjects}
 Let $(S,M,\Delta_A)$ be a graded marked surface.
 Let $X$ be a basic silting object in $K^b(\proj A)$.
 Then $X$ is isomorphic to a direct sum $\bigoplus_{i=1}^n \P_{(\gamma_i, f_i)}$,
 where $\zD=\{\gamma_1, \ldots, \gamma_n\}$ is an admissible~$\gpoint$-dissection of~$(S,M)$.
\end{theorem}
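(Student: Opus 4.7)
The plan is to separate out the three structural requirements of an admissible $\gpoint$-dissection one at a time: the indecomposable summands of $X$ come from graded $\gpoint$-arcs (not from closed curves), the associated arcs are admissible (no interior crossings), and they occur in the correct number.

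First I would decompose $X$ via Krull-Schmidt as $X\cong \bigoplus_{i=1}^n X_i$ with the $X_i$ pairwise non-isomorphic indecomposables. By Theorem \ref{theorem:object and map in derived category}(3), each $X_i$ is either a string object $\P_{(\gamma_i,f_i)}$ for a graded $\gpoint$-arc, or a band object $\P_{(\gamma_i,f_i,\lambda_i)}$ for a graded closed curve. To rule out band summands, fix a graded closed curve $(\gamma,f)$ and observe that the family $\{\P_{(\gamma,f,\mu)} : \mu\in k^*\}$ consists of infinitely many pairwise non-isomorphic indecomposables of $K^b(\proj A)$ (since $k$ is algebraically closed and hence infinite), all sitting in the same homogeneous tube of the AR-quiver. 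A thick subcategory generated by the finitely many summands of $X$ meets this connected component of the AR-quiver in at most finitely many such objects, so if $X$ contained a band summand then $\thick X \neq K^b(\proj A)$, contradicting silting. Hence each $X_i$ is of the form $\P_{(\gamma_i,f_i)}$ for a graded $\gpoint$-arc.

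Next I would show that the collection $\{\gamma_1,\ldots,\gamma_n\}$ has no interior intersections. For each pair $(i,j)$, allowing $i=j$, the pre-silting hypothesis gives $\Hom(\P_{(\gamma_i,f_i)},\P_{(\gamma_j,f_j)}[k])=0$ for every $k>0$, while Theorem \ref{theorem:object and map in derived category2}(2) equates $\dim\Hom^{\bullet}(\P_{(\gamma_i,f_i)},\P_{(\gamma_j,f_j)})$ with $|\Int_S(\gamma_i,\gamma_j)|$. Each interior transversal crossing of $\gamma_i$ and $\gamma_j$ contributes a pair of oriented intersections, one in each direction, yielding a pair of morphisms whose degrees $d, d'$ sum to $+1$; geometrically, the two oriented intersections at the same point correspond to a full anti-clockwise rotation at the crossing and therefore differ by the shift $[1]$ in the derived category, as encoded in Theorem \ref{theorem:object and map in derived category2}. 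Consequently at least one of $d, d'$ is strictly positive, contradicting pre-silting. Hence the $\gamma_i$ have no interior self-intersections and, for $i\neq j$, no interior intersections with each other, so they form an admissible collection of $\gpoint$-arcs in the sense of Definition \ref{definition:addmissable dissections}.

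Finally, a cardinality count closes the argument. The number of indecomposable summands of any basic silting object in $K^b(\proj A)$ equals $\mathrm{rank}\,K_0(K^b(\proj A))$, which is the number of indecomposable projective $A$-modules and, by the correspondence between gentle algebras and graded marked surfaces, equals $|M^{\gpoint}|+|P^{\rpoint}|+b+2g-2$. This is precisely the cardinality of any admissible $\gpoint$-dissection of $(S,M)$ as recalled after Definition \ref{definition:addmissable dissections}, so an admissible collection of $\gpoint$-arcs of this maximal size is automatically an admissible $\gpoint$-dissection, and $\Delta:=\{\gamma_1,\ldots,\gamma_n\}$ is the desired dissection.

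The most delicate point is the geometric sign computation in the third paragraph: one must verify that the degrees of the two oriented morphisms at an interior crossing genuinely sum to $+1$ rather than $-1$. The ``$[1]$ or $[-1]$'' ambiguity in Theorem \ref{theorem:object and map in derived category2} depends on the local position of the relevant $\gpoint$-marked points, and needs to be pinned down by a case analysis of the possible configurations at a crossing. With the wrong sign one could a priori find a configuration with $d=0$ and $d'=-1$ compatible with pre-silting, leaving the obstruction argument incomplete; this is where the orientation conventions of the Opper-Plamondon-Schroll surface model do the essential work.
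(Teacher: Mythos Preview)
The paper does not prove this theorem; it is quoted verbatim from \cite[Theorem~3.2]{APS19} without argument. So there is no proof in the present paper to compare to, and your proposal stands on its own.

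Your outline is the natural one, but the step ruling out band summands is wrong as written. You claim that ``a thick subcategory generated by the finitely many summands of $X$ meets this connected component of the AR-quiver in at most finitely many such objects''. This is false: $K^b(\proj A)=\thick(A)$ is itself thick-generated by finitely many objects and contains every band object, so finite generation of $\thick X$ places no bound on how many band objects it contains and no contradiction results. (There is also a smaller slip: the objects $\P_{(\gamma,f,\mu)}$ for distinct $\mu\in k^*$ lie at the mouths of pairwise \emph{different} homogeneous tubes, not in a single one.) A correct argument is via self-extensions: a band object $B=\P_{(\gamma,f,\lambda)}$ sits at the mouth of a homogeneous tube of the AR-quiver of $K^b(\proj A)$ (cf.\ \cite{Bob11}), so $\tau B\cong B$ and the almost split triangle $\tau B\to E\to B\to\tau B[1]$ yields a nonzero map $B\to B[1]$; hence $\Hom(B,B[1])\neq 0$ and $B$ cannot be pre-silting.

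There is a second, smaller gap. Definition~\ref{definition:addmissable dissections} requires both that the arcs meet only at endpoints \emph{and} that each complementary region contain a $\rpoint$-point; you verify only the first. The count after Definition~\ref{definition:addmissable dissections} upgrades an \emph{admissible} collection of the correct size to a dissection, so you still owe the second condition. One way to close this: a complementary polygon with no $\rpoint$-point and at least three sides contains a distinguished triangle on the surface in the sense of Proposition-Definition~\ref{prop-def-distinguished triangle on the surface}, and Lemma~\ref{lemma:triangles as triangles} then produces a nonzero degree-$1$ morphism between two of its sides, contradicting pre-silting; a bigon with no $\rpoint$-point would force two of the $\gamma_i$ to be homotopic, contradicting basicness of $X$. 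Your final caveat about the sign at interior crossings is well taken but resolves favourably: the two morphisms at an interior crossing always have degrees summing to $+1$, so at least one is strictly positive.
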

Note that for an arbitrary admissible~$\gpoint$-dissection $\zD$,  there currently is no characterisation of when there exists a grading $f$ such that $(\zD,f)$ gives rise to a silting object. So we introduce the following.
\begin{definition}\label{definition:silting dissection}
Let $(S,M,\Delta_A)$ be a graded marked surface. Let $\zD=\{\zg_1,\cdot\cdot\cdot,\zg_n\}$ be an admissible $\gpoint$-dissection and let $f=\{f_{\zg_1},\cdot\cdot\cdot,f_{\zg_n}\}$ be a set of gradings of the arcs in $\zD$. We call
$(\zD,f)$ a \emph{silting dissection} if $$\P_{(\zD,f)}=\bigoplus\limits_{(\zg,f_\zg)\in (\zD,f)}\P_{(\zg,f_\zg)}$$ is a silting object in $K^b(\proj A)$.
\end{definition}

\subsection{Silting mutations}\label{subsection:Mutations of silting objects}
It is shown in \cite{AI12} that it is always possible to mutate silting objects. In the following we recall the definition of (irreducible) silting mutations.
\begin{proposition-definition}\cite{AI12}\label{Definition:mutation of silting objects}
Let $M$ be a (basic) silting object in a triangulated category $\calt$. Let $X$ be an indecomposable summand of $M$: $M=M^\prime\oplus X$.
We define $N_X$ as the mapping cone in the distinguished triangle
\begin{equation}\label{exchange triangle}
X\s{a}\longrightarrow M_X\longrightarrow {N_X}\s{}\longrightarrow {M[1]},
\end{equation}
where $a$ is a minimal left $add(M^\prime)$-approximation. We call $\mu^+_X(M)=M^\prime\oplus N_X$ the \emph{left silting mutation} of $M$ at $X$. The triangle \eqref{exchange triangle} is called the \emph{exchange triangle} of the left silting mutation. The \emph{right silting mutation} $\mu^-_X(M)$ and its exchange triangle are defined dually. Then $\mu^+_X(M)$ and $\mu^-_X(M)$ are both silting objects in $\calt$.
\end{proposition-definition}

\subsection{Silting reductions}\label{subsection:}
Let $\calt$ be a triangulated category with a pre-silting subcategory $\calp$.
We call the Verdier quotient $\calt/\thick\calp$ the \emph{silting reduction} of $\calt$ with respect to $\calp$, where $\thick\calp$ is the smallest subcategory of $\calt$ containing $\calp$, which is closed under direct summands.
The following result shows that under some mild conditions we can realize this Verdier quotient as a subfactor category of $\calt$ introduced in \cite{IY08}. For this, in the notation above, we define the following subcategories of $\calt$
 $$\lp=\{M\in \calt: \Hom_\calt(M,P[i])=0 ~\textrm{for}~\forall~ i > 0, ~\textrm{and}~\forall~ P \in \calp\},$$ $$\rp=\{M\in \calt: \Hom_\calt(P[i],M)=0 ~\textrm{for}~\forall~ i < 0, ~\textrm{and}~\forall~ P \in \calp\}.$$

\begin{theorem}\cite[Theorem 4.2]{IY08} \cite[Lemma 3.5, Theorem 3.6 ]{IY18} \label{theorem:equivalence}
Let $\calt$ be a triangulated category and let $\calp$ be  a pre-silting subcategory of $\calt$. Set
$$\mbox{$\calz=\lp  \cap\rp$}.$$
Then the following hold.

(1) Denote by $\calz_{\calp}$ the factor category of $\calz$ with respect to the subcategory $\calp$.Then as a subfactor category of $\calt$, $\calz_{\calp}$ has the structure of a triangulated category
with respect to the following shift functor and triangles:
\begin{itemize}
\item For $X\in\calz$, we take a triangle
\[\xymatrix{X\ar[r]^{\iota_X}&P_X\ar[r]& X\langle1\rangle\ar[r]& X[1]}\]
with a (fixed) left $\calp$-approximation $\iota_X$.
Then $\langle1\rangle$ gives a well-defined auto-equivalence of ${\calz_\calp}$,
which is the \emph{shift functor} of ${\calz_\calp}$.
\item For a triangle $X\xrightarrow{f} Y\xrightarrow{g} Z\xrightarrow{h}X[1]$
with $X,Y,Z\in\calz$, take the following commutative diagram of triangles:
\begin{equation*}\label{define a triangle}
\xymatrix{
X\ar[r]^f\ar@{=}[d]&Y\ar[r]^g\ar[d]&Z\ar[r]^h\ar[d]^a&X[1]\ar@{=}[d]\\
X\ar[r]^{\iota_X}&P_X\ar[r]&X\langle1\rangle\ar[r]&X[1]
}\end{equation*}
Then we have a complex $X\xrightarrow{\overline{f}}Y\xrightarrow{\overline{g}}Z
\xrightarrow{\overline{a}}X\langle1\rangle$. We define \emph{triangles} in
${\calz_\calp}$ as the complexes which are isomorphic to complexes obtained in this way.
\end{itemize}

(2) We assume the following conditions are satisfied:

(P1) $\calp$ is a functorially finite subcategory of $\calt$;

(P2) $\Hom_\calt(X,\calp[i])=0=\Hom_\calt(\calp,X[i]) ~\textrm{for}~\forall~X\in \calt~\textrm{and}~ i \gg 0$.

Let $\rho\colon\calt\to\calt/\thick\calp$ be the canonical projection functor.
Then the composition $\calz\subset\calt\xrightarrow{\rho}\calt/\thick\calp$ of natural functors induces an equivalence of triangulated categories:
\[\bar{\rho}\colon{\calz_\calp}\stackrel{\simeq}{\longrightarrow}\calt/\thick\calp.\]
\end{theorem}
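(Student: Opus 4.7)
The plan is to follow the Iyama--Yoshino scheme, splitting the argument into the construction of the triangulated structure on $\calz_\calp$ and its identification with the Verdier quotient $\calt/\thick\calp$.

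For part (1), the first step is to show that the shift $\langle 1\rangle$ is well-defined. For each $X\in\calz$, fix a left $\calp$-approximation $\iota_X\colon X\to P_X$ and complete it to a triangle $X\to P_X\to X\langle 1\rangle\to X[1]$. The key claim is that $X\langle 1\rangle$ again lies in $\calz=\lp\cap\rp$; this follows from the long exact sequence of $\Hom_\calt(-,\calp[i])$ and $\Hom_\calt(\calp[i],-)$ applied to this triangle, combined with $X,P_X\in\calz$ and the pre-silting condition $\Hom_\calt(\calp,\calp[i])=0$ for $i>0$. A standard argument shows the resulting endofunctor is independent, modulo the ideal $[\calp]$, of the choice of approximation, and one constructs a quasi-inverse $\langle -1\rangle$ dually from right $\calp$-approximations. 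The triangles in $\calz_\calp$ are then defined by the commutative ladder given in the statement, and the axioms (TR1)--(TR3) reduce to formal diagram chases; the octahedral axiom (TR4) is the technical heart, verified by lifting an octahedron from $\calt$ and checking that all factorizations through $\calp$ become zero in $\calz_\calp$.

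For part (2), the composition $\calz\hookrightarrow\calt\xrightarrow{\rho}\calt/\thick\calp$ kills morphisms factoring through $\calp$, hence factors through $\calz_\calp$ giving $\bar\rho$. Essential surjectivity uses conditions (P1) and (P2): given any $X\in\calt$, (P1) provides $\calp$-approximations, and (P2) bounds the length of the iterated approximations needed to replace $X$ by an object in $\lp$ and then in $\rp$, so one lands in $\calz$ after finitely many steps while remaining isomorphic to $X$ in the quotient. For fully faithfulness, one represents a morphism in the quotient by a roof $X\xleftarrow{s} X'\to Y$ where the cone of $s$ lies in $\thick\calp$, and uses the perpendicular conditions defining $\calz$ to show that this roof is equivalent to an honest morphism in $\calz_\calp$; here the vanishing of $\Hom_\calt(\calz,\calp[>0])$ and $\Hom_\calt(\calp[>0],\calz)$ is what lets one lift and dismount the roofs.

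The main obstacle I expect is the final compatibility check: that the triangles defined combinatorially in part (1), via the approximation triangle, correspond under $\bar\rho$ precisely to the distinguished triangles of the Verdier quotient $\calt/\thick\calp$. The difficulty lies in reconciling the intrinsic shift $\langle 1\rangle$ on $\calz_\calp$ with $[1]$ of $\calt/\thick\calp$: the approximation triangle $X\to P_X\to X\langle 1\rangle\to X[1]$ becomes an isomorphism $X\langle 1\rangle\cong X[1]$ in the quotient (since $P_X\in\thick\calp$), but transporting this isomorphism naturally through all relevant diagrams, and matching octahedra on both sides, is precisely what requires the full strength of (P1)--(P2) and careful use of the subfactor construction. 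This is the content of \cite[Lemma 3.5, Theorem 3.6]{IY18}, and I would invoke it directly rather than reprove it from scratch.
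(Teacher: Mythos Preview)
The paper does not give its own proof of this statement: it is recalled verbatim from \cite[Theorem 4.2]{IY08} and \cite[Lemma 3.5, Theorem 3.6]{IY18} as background in the preliminaries, with no argument supplied. Your proposal, which ultimately invokes these references directly, is therefore aligned with the paper; the sketch you provide of the Iyama--Yoshino mechanism is accurate and more than the paper itself offers.
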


Note that the two conditions (P1) and (P2) in the above theorem are satisfied for any pre-silting subcategory $\calp$ (arising from a pre-silting object) in the perfect derived category $\calt=K^b(\proj A)$ of a gentle algebra $A$.

For a triangulated category $\calt$, we denote by $\overline{\calt}$ its \emph{orbit category}
 with respect to the shift functor. Denote by $\overline{X}$ an object in $\overline{\calt}$  with  representative $X$. Then for any two objects $\overline{X}$ and $\overline{Y}$,
we have
$$\Hom_{\overline{\calt}}(\overline{X},\overline{Y})=\Hom_{\calt}^{\bullet}(X,Y):= \underset{i=-\infty}{\overset{\infty}{\bigoplus}}\Hom_{\calt}(X,Y[i]),$$
where $X$ and $Y$ are any fixed representatives of $\overline{X}$ and $\overline{Y}$ respectively. Note that $\overline{\calt}$ is an additive category whose morphism space might be infinite dimensional. However, if $\calt = K^b(\proj A)$ then $\Hom_{\overline{\calt}}(\overline{X},\overline{Y})$ is finite dimensional for any $\overline{X},\overline{Y} \in \overline{\calt}$.

\section{Distinguished triangles on the surface}\label{section:Distinguished triangles on the surface}

Let $A$ be a gentle algebra associated with a graded marked surface $(S,M,\zD_A)$.
We will denote by $\cald^b(A)$ its bounded derived category.
Throughout this paper, we will keep these notation.
We begin by  introducing the notion of a distinguished triangle on the surface, which will play a central role throughout the remainder of the paper.

\begin{proposition-definition}[distinguished triangle on the surface]\label{prop-def-distinguished triangle on the surface}
Let $\za$ and $\zb$ be two $\gpoint$-arcs with a common endpoint $q_1$. Let $\zg$ be the smoothing of $\za$ and $\zb$ at $q_1$, where $\zb$ and $\zg$ share a common endpoint $q_2$, and $\zg$ and $\za$ share a common endpoint $q_3$, see Figure \ref{Figure:distinguished triangle on the surface}. Then $\za$ is the smoothing of $\zb$ and $\zg$ at $q_2$, and $\zb$ is the smoothing of $\zg$ and $\za$ at $q_3$.

We call the (generalized) triangle formed by $\za$, $\zb$ and $\zg$ a \emph{distinguished triangle} on the marked surface and we denote it by $\triangle (\za,\zb,\zg)$.

\begin{figure}[H]
\begin{center}
{\begin{tikzpicture}[scale=0.7]
\draw[green] (0,0) circle [radius=0.2];
\draw[green] (0,5) circle [radius=0.2];
\draw[green] (5,0) circle [radius=0.2];

\draw[-] (0,0.2) -- (0,4.8);
\draw[-] (0.2,0) -- (4.8,0);
\draw[-] (0.15,4.85) -- (4.85,0.15);

\draw[-,red] (4.2,-0.2) -- (3.8,.3);
\draw[-,red] (3.8,0.8) -- (4.8,1);

\draw[-,red] (.4,-0.3) -- (.8,.3);
\draw[-,red] (-.4,0.8) -- (.4,.8);

\draw[-,red] (.2,4.3) -- (.9,4.7);
\draw[-,red] (-.4,4.2) -- (.4,4);

\draw (2,3.5) node {$\za$};
\draw (2.5,-.5) node {$\zb$};
\draw (-.5,2.5) node {$\zg$};

\draw (-.5,5.5) node {$q_3$};
\draw (-.5,-.5) node {$q_2$};
\draw (5.5,-.5) node {$q_1$};

\draw (4.3,1.4) node {$s^\za_{q_1}$};
\draw (3.7,-.6) node {$s^\zb_{q_1}$};
\draw (0.8,-.6) node {$s^\zb_{q_2}$};
\draw (-.4,1.3) node {$s^\zg_{q_2}$};
\draw (-.4,3.6) node {$s^\zg_{q_3}$};
\draw (1.25,4.3) node {$s^\za_{q_3}$};
\end{tikzpicture}}

\end{center}
\begin{center}
\caption{distinguished triangle on the surface}\label{Figure:distinguished triangle on the surface}
\end{center}
\end{figure}
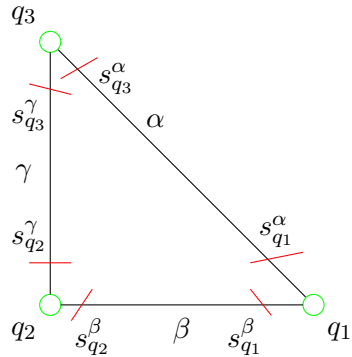

\end{proposition-definition}
\begin{proof}
 Since $\zg$ is the smoothing of $\za$ and $\zb$, there exists neither a  $\rpoint$-puncture nor a boundary component in the interior of $\triangle (\za,\zb,\zg)$. The arc obtained by smoothing $\zb$ and $\zg$ at $q_2$ is homotopic to $\za$. Similarly $\zb$ is homotopic to the smoothing of $\zg$ and $\za$ at  $q_3$.
\end{proof}

\begin{remark}\label{remark:distinguished triangle}
(1) We will refer to a usual triangle on the surface as a real triangle, that is, a triangle such that the sides are all simple and intersect only in marked $\gpoint$-points, and the curve obtained from smoothing the crossings of the sides is contractible. Then a real triangle is a distinguished triangle. However, note that the sides of a distinguished triangle on the surface might self-intersect or intersect with each other in other points than the $q_i, 1\leq i \leq 3$, and that the $q_i$ may coincide with each other. So a distinguished triangle is not necessarily a real triangle.

(2) The name distinguished triangle on the surface is motivated by the following observation: We will see in Lemmas \ref{lemma:equalities} and  \ref{lemma:triangles as triangles}  that for any distinguished triangle on the surface there are gradings on the arcs corresponding to the sides of the distinguished triangle such that the distinguished triangle on the surface gives rise  to a distinguished triangle in the derived category.
\end{remark}

For the remainder of  the paper, we fix the following notation.
Let $\zg$ be an $\gpoint$-arc with endpoints $q_1$ and $q_2$, denote by $s^\zg_{q_i}$ the intersection in $\gamma \cap \Delta_A^*$ which is nearest to $q_i$, see the examples in Figure \ref{Figure:distinguished triangle on the surface}. Note,  in order for  the notation to be well-defined in the case of a loop $\zg$, we treat the unique endpoint of a loop as two distinct endpoints.

\begin{lemma}\label{lemma:equalities}
Assume  that we have a distinguished triangle on the surface as in Figure \ref{Figure:distinguished triangle on the surface}. Let $f_\za$, $f_\zb$ and $f_\zg$ be any gradings on $\alpha, \beta $ and $\gamma$, respectively. Then
\[
f_\za(s^\za_{q_3})-f_\za(s^\za_{q_1})+f_\zb(s^\zb_{q_1})
-f_\zb(s^\zb_{q_2})+f_\zg(s^\zg_{q_2})-f_\zg(s^\zg_{q_3})=1.
\]
\end{lemma}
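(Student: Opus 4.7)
The plan is to interpret the left-hand side as a topological invariant depending only on the three arcs, and to compute it via an Euler--characteristic argument on the disk bounded by the distinguished triangle.

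First, I would note that the expression is independent of the chosen gradings: any two gradings on an arc $\zeta$ differ by a global integer shift which cancels inside each paired difference $f_\zeta(s^\zeta_{p_2})-f_\zeta(s^\zeta_{p_1})$. Unpacking the recursive definition in Definition~\ref{definition:gradings}, each such difference equals the signed count of polygons of $\Delta^*$ that $\zeta$ traverses between its first intersection with $\Delta^*$ (nearest $p_1$) and its last (nearest $p_2$), where a polygon contributes $+1$ when its unique $\gpoint$-marked point lies on the left of $\zeta$ in the direction of traversal and $-1$ otherwise. Summing the three terms, the LHS becomes the total signed count along the closed loop $\eta:=\alpha\cdot\gamma^{-1}\cdot\beta^{-1}$ tracing the triangle from $q_1\to q_3\to q_2\to q_1$.

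Next I would use that $\eta$ bounds the closed region $D$ cut out by the distinguished triangle, which by Proposition-Definition~\ref{prop-def-distinguished triangle on the surface} contains neither $\rpoint$-punctures nor boundary components in its interior and is thus a topological disk with $\chi(D)=1$. Every $\rpoint$-arc of $\Delta^*$ meeting $D$ therefore restricts to a chord with both endpoints on $\partial D=\alpha\cup\beta\cup\gamma$, and these chords decompose $D$ into sub-regions each contained in a single polygon of $\Delta^*$. A discrete Gauss--Bonnet / Euler-characteristic computation on this planar cell decomposition should then identify the signed count above with $\chi(D)=1$.

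The main obstacle is making the Gauss--Bonnet computation precise, in particular handling the three corners $q_1,q_2,q_3$ at which $\eta$ transitions between arcs and where the polygon $P_{q_i}$ of $\Delta^*$ containing $q_i$ is only partially swept out by $\eta$. The smoothing property characterising a distinguished triangle is exactly what should ensure that these partial corner contributions combine cleanly so that the net signed count is precisely $\chi(D)=1$, with no leftover correction. A cleaner alternative would be induction on the total number of transverse intersections of $\alpha,\beta,\gamma$ with $\Delta^*$ inside $D$: the base case when the three arcs are in minimal position with $\Delta^*$ reduces to a finite case analysis at the three corners, and the inductive step consists of sliding one arc across a single $\rpoint$-arc of $\Delta^*$ and verifying that both sides of the identity change by the same amount.
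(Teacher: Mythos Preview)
Your approach is genuinely different from the paper's. The paper gives a short, completely local computation: it fixes attention at the corner $q_1$, lets $\delta,\delta'\in\Delta_A^*$ be the $\rpoint$-arcs giving $s^\alpha_{q_1}$ and $s^\beta_{q_1}$, and splits into the cases $\delta\neq\delta'$ and $\delta=\delta'$. In each case it writes down two or three explicit equalities between grading differences along $\alpha,\beta$ and the corresponding differences along $\gamma$ (using that $\gamma$ is the smoothing), plus one $\pm1$ coming from the position of a $\gpoint$-point, and adds them up. No topology beyond the definition of grading is invoked.

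Your global Euler--characteristic idea is appealing, but as it stands there is a gap rather than merely a different route. The sentence ``the LHS becomes the total signed count along the closed loop $\eta$'' is not quite correct: each term $f_\zeta(s^\zeta_{p_2})-f_\zeta(s^\zeta_{p_1})$ only sums the $\pm1$ contributions from polygons traversed \emph{strictly between} the first and last crossings of $\zeta$ with $\Delta^*$; the polygon of $\Delta^*$ containing each corner $q_i$ is not counted by any of the three differences. So the LHS is the signed count along $\eta$ \emph{minus} three corner terms, and the identity you want is that this discrepancy is exactly $1$. Your Gauss--Bonnet assertion that the full signed count equals $\chi(D)$ is therefore not what is needed, and in any case is stated without argument (note also that $\eta$ passes through the $\gpoint$-points $q_i$, so it is not a closed curve in the sense of Definition~\ref{definition:gradings} and the usual winding-number machinery does not apply directly). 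The actual content of the lemma lives entirely in those corner contributions---which is precisely the ``main obstacle'' you defer. Your proposed induction would, when the base case is written out, amount to the same local case analysis the paper performs at $q_1$; so either way you end up doing the paper's computation.
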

\begin{proof}
Assume that $\delta,\delta'\in \Delta_A^*$ are $\rpoint$-arcs such that $s^\za_{q_1}=\za\cap\delta$ and $s^\zb_{q_1}=\zb\cap\delta'$. Then there are two cases.

Case I. $\delta\neq \delta'$. Since $\zg$ is the smoothing of $\za$ and $\zb$ at $q_1$, $\zg$  intersects  $\delta$ and $\delta'$ successively. Denote the respective intersection points  by $p$ and $q$.
Since $\zg$ is the smoothing of $\za$ and $\zb$, we have the following equalities

\begin{align*}\label{eq:disttriang}
f_\za(s^\za_{q_3})-f_\za(s^\za_{q_1}) & =  f_\zg(s^\zg_{q_3})-f_\zg(p); \\
f_\zb(s^\zb_{q_2})-f_\zb(s^\zb_{q_1}) & =
f_\zg(s^\zg_{q_2})-f_\zg(q).
\end{align*}

On the other hand, going from $q$ to $p$ along $\zg$, we note that $q_1$ is at the right side of $\zg$. So we have
\[
f_\zg(p)=
f_\zg(q)-1.
\]

Combining these equalities, we get the desired equation.

Case II. $\delta=\delta'$. In this case, $\zg$ does not intersect with $\delta$ close to $q_1$, since by assumption $\zg$ is in minimal position. Denote by $p=\za\cap \sigma$ (resp. $q=\zb\cap \varsigma$) the first intersection  with respect to $q_1$  in $\za\cap \Delta_A^*$ (resp. $\zb\cap \Delta_A^*$) such that the corresponding  arc  in $\Delta_A^*$ also intersects $\gamma$, see Figure \ref{figure:lemma}.
Then $\zg$ successively intersects with $\sigma$ and $\varsigma$, and we denote the intersections by $p'$ and $q'$ respectively.
Similar to the first case, we have equalities
\[
f_\za(s^\za_{q_3})-f_\za(p)=f_\zg(s^\zg_{q_3})-f_\zg(p');
\]
\[
f_\zb(s^\zb_{q_2})-f_\zb(q)=
f_\zg(s^\zg_{q_2})-f_\zg(q').
\]

 Now we denote by $p^0, p^1,\cdots, p^{m+1}$ the ordered intersections in $\za\cap \zD_A^*$ between $s^\za_{q_1}$ and $p$, where $p^0=s^\za_{q_1}, p^{m+1}=p$, and $p^i=\za\cap \sigma^i, 0\leq i\leq m+1$, for some $\rpoint$-arcs $\sigma^i\in \zD_A^*$. Similarly, we denote by $q^0, q^1,\cdots, q^{n+1}$ the ordered intersections in $\zb\cap \zD_A^*$ between $s^\zb_{q_1}$ and $q$, where $q^0=s^\zb_{q_1}, q^{n+1}=q$, and $q^i=\zb\cap \varsigma^i, 0\leq i\leq n+1$. Then by the construction of $\sigma$  and $\varsigma$, we have $m=n$ and $\sigma^i=\varsigma^i$ for any $0\leq i\leq m$. We denote these arcs as $\delta_i$, where in particular $\delta=\Delta_A$.

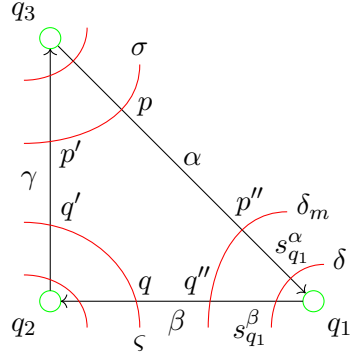
\begin{figure}[H]
\begin{center}
{\begin{tikzpicture}[scale=0.7]
\draw[green] (0,0) circle [radius=0.2];
\draw[green] (0,5) circle [radius=0.2];
\draw[green] (5,0) circle [radius=0.2];

\draw[->] (0,0.2) -- (0,4.8);
\draw[<-] (0.2,0) -- (4.8,0);
\draw[->] (0.15,4.85) -- (4.85,0.15);

\draw (2.7,2.7) node {$\za$};
\draw (2.4,-.4) node {$\zb$};
\draw (-.4,2.3) node {$\zg$};

\draw (1.7,4.8) node {$\sigma$};
\draw (1.7,-.8) node {$\varsigma$};
\draw (5.5,.8) node {$\delta$};
\draw (5,1.8) node {$\delta_m$};

\draw (2.8,.4) node {$q''$};
\draw (1.8,.3) node {$q$};
\draw (3.8,1.9) node {$p''$};
\draw (4.6,1) node {$s_{q_1}^\za$};
\draw (3.8,-.5) node {$s_{q_1}^\zb$};
\draw (1.8,3.7) node {$p$};
\draw (.4,1.8) node {$q'$};
\draw (.4,2.8) node {$p'$};

\draw (-.5,5.5) node {$q_3$};
\draw (-.5,-.5) node {$q_2$};
\draw (5.5,-.5) node {$q_1$};

\draw[red](.7,-.5) to [out=90,in=0] (-.5,.5);
\draw[red](1.7,-.5) to [out=90,in=0] (-.5,1.5);

\draw[red](4.2,-.5) to [out=90,in=180] (5.2,.7);
\draw[red](3,-.5) to [out=90,in=180] (4.5,1.7);

\draw[red](-.5,4.2) to [out=0,in=270] (.7,5.2);
\draw[red](-.5,3) to [out=0,in=270] (1.7,4.5);
\end{tikzpicture}}

\end{center}
\begin{center}
\caption{The second case in Lemma \ref{lemma:equalities} }\label{figure:lemma}
\end{center}
\end{figure}

 In the following, when we talk about the (left or right) position of a $\gpoint$-point with respect to the arcs, we always go from $q_1$ to $q_2$ to $q_3$ to $q_1$.   Note that any two arcs $\delta_i, \delta_{i+1}, 0 \leq i \leq m-1$ are sides of a polygon in $\Delta_A^*$, and if a $\gpoint$-point is on the left (resp. right) side of $\za$, then it is on the right (resp. left) side of $\zb$, since there exist no $\gpoint$-points in the interior of a distinguished triangle on the surface. So we have
\[
f_\za(s^\za_{q_1})-f_\za(p'')=f_\zb(s^\zb_{q_1})-f_\zb(q''),
\]
where $p''=\za\cap \delta_{m}$ and $q''=\zb\cap \delta_{m}$.

On the other hand, $\delta_{m}$, $\sigma$ and $\varsigma$ are sides of a polygon in $\Delta_A^*$.
Now we have three subcases with regards to the position of the $\gpoint$-point $t$ in this polygon.

 Subcase 1. $t$ is on the left side of $\za$ and on the right side of $\zb$ and $\zg$.  Then we have the following equalities:
\[
f_\za(p'')=
f_\za(p)+1;
f_\zb(q)=
f_\zb(q'')-1;
f_\zg(p')=
f_\zg(q')-1.
\]

Subcase 2. $t$ is on the left side of $\zb$ and on  the right side of $\za$ and $\zg$.  Then we have:
\[
f_\za(p'')=
f_\za(p)-1;
f_\zb(q)=
f_\zb(q'')+1;
f_\zg(p')=
f_\zg(q')-1.
\]

Subcase 3. $t$ is on the left side of $\zg$, and on  the right side of $\za$ and $\zb$.  Then we have :
\[
f_\za(p'')=
f_\za(p)-1;
f_\zb(q)=
f_\zb(q'')-1;
f_\zg(p')=
f_\zg(q')+1.
\]

    Finally,  the result follows from  combining the above equalities in each subcase with the three  equalities above.
\end{proof}

The following lemma is a re-statement of Theorem 4.1 in \cite{OPS18} in the context of distinguished triangles on the surface.
The key point of Lemma~\ref{lemma:triangles as triangles} is that the compatibility of the gradings in a distinguished triangle on the surface proved in Lemma~\ref{lemma:equalities} gives rise to three maps in the derived category which form a distinguished triangle in the derived category. This is a key idea underlying many of the proofs in this paper.

\begin{lemma}[distinguished triangles on the surface as distinguished triangles]\label{lemma:triangles as triangles}
Assume we have a distinguished triangle on the surface as in Figure \ref{Figure:distinguished triangle on the surface}. Let $f_\za$, $f_\zb$ and $f_\zg$ be gradings of $\alpha, \beta$ and $\gamma$ respectively such that $f_\za(s^\za_{q_1})=f_\zb(s^\zb_{q_1})$ and $f_\zb(s^\zb_{q_2})=f_\zg(s^\zg_{q_2})$. Then

(1) there are three maps

$$\P_{(\za,f_\za)}\s{a}\longrightarrow \P_{(\zb,f_\zb)},~ \P_{(\zb,f_\zb)}\s{b}\longrightarrow \P_{(\zg,f_\zg)}~ \mbox{ and }~ \P_{(\zg,f_\zg)}\s{c}\longrightarrow {\P_{(\za,f_\za)}[1]} $$
in $\cald^b(A)$ arising from the intersections of the arcs $\za, \zb $ and $\zg$ at $q_1, q_2$ and $q_3$ respectively;

(2) the maps in (1) give rise to the following distinguished triangle in $\cald^b(A)$
\begin{equation}\label{triangles as triangles-exchange triangle}
\P_{(\za,f_\za)}\s{a}\longrightarrow \P_{(\zb,f_\zb)}\s{b}\longrightarrow \P_{(\zg,f_\zg)}\s{c}\longrightarrow {\P_{(\za,f_\za)}[1]}.
\end{equation}
\end{lemma}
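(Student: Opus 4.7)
The approach is to first establish the existence of the three morphisms in (1) from the local smoothing data together with the boundary-intersection rule for morphisms recalled in Subsection \ref{subsection:derived categories of gentle algebras}, and then to derive (2) by identifying the triangle with the standard mapping-cone triangle attached to $a$ via Theorem 4.1 of \cite{OPS18}.

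For part (1), the intersection of $\za$ and $\zb$ at $q_1$ produces an oriented intersection in $\Int_S(\za,\zb)$ because $\zg$, being the smoothing of $\za$ and $\zb$ at $q_1$, records precisely the half-edge configuration in which $\zb$ follows $\za$ in the anti-clockwise direction around $q_1$. The grading hypothesis $f_\za(s^\za_{q_1})=f_\zb(s^\zb_{q_1})$ is then the matching condition displayed just before Theorem \ref{theorem:object and map in derived category2}, so $a$ exists; the map $b$ at $q_2$ is produced identically. For $c$, I invoke Lemma \ref{lemma:equalities}: substituting the two given equalities into the identity there collapses four of the six terms and yields
\[
f_\zg(s^\zg_{q_3}) = f_\za(s^\za_{q_3}) - 1 = f_\za[1](s^\za_{q_3}).
\]
Since $\P_{(\za,f_\za)}[1] = \P_{(\za,f_\za[1])}$ by Theorem \ref{theorem:object and map in derived category2}(1), the intersection at $q_3$ between $\zg$ and $\za$ (with the shifted grading on $\za$) now satisfies the grading condition, and produces the map $c$ as required.

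For part (2), the plan is to apply \cite[Theorem 4.1]{OPS18}, which asserts that the mapping cone of a morphism between two string objects coming from a boundary intersection is the indecomposable attached to the smoothed arc at that intersection, equipped with a canonically determined grading. Applied to the morphism $a$ built at $q_1$, this produces a distinguished triangle
\[
\P_{(\za,f_\za)} \xrightarrow{a} \P_{(\zb,f_\zb)} \longrightarrow \P_{(\zg,f')} \longrightarrow \P_{(\za,f_\za)}[1]
\]
for some grading $f'$ on $\zg$. The grading $f'$ produced by the cone construction is uniquely pinned down by the requirement that the second map of the triangle arises from the intersection at $q_2$, i.e.\ $f'(s^\zg_{q_2})=f_\zb(s^\zb_{q_2})$; by our hypothesis this forces $f'=f_\zg$, using the uniqueness remark after Definition \ref{definition:gradings} (a grading on an arc is determined by its value at a single point of $\zg\cap\zD_A^*$). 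Finally, both the connecting morphism of the cone triangle and the candidate $c$ lie in the one-dimensional component of $\Hom^{\bullet}(\P_{(\zg,f_\zg)},\P_{(\za,f_\za)}[1])$ singled out by the oriented intersection at $q_3$ (Theorem \ref{theorem:object and map in derived category2}(2)), so they agree up to a non-zero scalar, and rescaling $c$ produces the desired triangle \eqref{triangles as triangles-exchange triangle}.

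The main obstacle I anticipate lies in the degenerate configurations noted in Remark \ref{remark:distinguished triangle}, where some of the arcs may be loops, coincide with each other, or share further crossings beyond the three corners; in such cases the picture used to read off the anti-clockwise order at a given $q_i$ becomes subtle, and one has to separately track each pair of half-edges meeting at the marked point to confirm that the oriented intersection produced by the smoothing is indeed the one selecting the expected morphism. Once the half-edges have been correctly identified at each corner, however, the smoothing-versus-cone identification from \cite[Theorem 4.1]{OPS18} applies verbatim and the compatibility of gradings supplied by Lemma \ref{lemma:equalities} is precisely what makes the three morphisms assemble into a distinguished triangle.
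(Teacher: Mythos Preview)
Your proposal is correct and follows essentially the same approach as the paper: the first two maps come directly from the grading hypotheses, Lemma \ref{lemma:equalities} supplies $f_\zg(s^\zg_{q_3})=f_\za(s^\za_{q_3})-1$ for the third map, and \cite[Theorem 4.1]{OPS18} gives the distinguished triangle. The paper's proof is terser, invoking \cite[Theorem 4.1]{OPS18} in a single sentence without your explicit identification of the cone grading $f'$ with $f_\zg$ or the matching of the connecting morphism with $c$; your additional detail on these points is a welcome elaboration rather than a different route.
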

\begin{proof}
The first two maps in (1) follow from the hypotheses on the grading, that is from the fact that  $f_\za(s^\za_{q_1})=f_\zb(s^\zb_{q_1})$ and $f_\zb(s^\zb_{q_2})=f_\zg(s^\zg_{q_2})$.
By Lemma \ref{lemma:equalities} the equality $f_\zg(s^\zg_{q_3})=f_\za(s^\za_{q_3})-1$ holds and thus the intersection of $\zg$ and $\za$ at $q_3$ induces the third map. It then follows from \cite [Theorem 4.1]{OPS18} that the triangle \eqref{triangles as triangles-exchange triangle} is distinguished.
\end{proof}

Although  distinguished triangles on the surfaces are in general not   real triangles on the surface, the relations between the diagonals in a quadrilateral formed by two distinguished triangles on the surfaces are like the `flip' of diagonals in (real)  ideal triangulations, see  Figure {\sib \ref{figure:flip}}, where $\zg$ and $\zg_+$ are the diagonals and all triangles are distinguished. The following proposition shows that this kind of flip on the marked surface corresponds to  an instance of  the octahedral axiom (and also to a homotopy push-out/pull-back) in the derived category. A similar correspondence is also established in  \cite{DK18} by Dyckerhoff and Kapranov  for triangulated categories with  2-periodic dg-enhancements associated with a surface model.

\begin{proposition}[flips as  homotopy push-outs/pull-backs]\label{proposition:flip and push-out}
Let $\zg$, $\zg_i$ and $\za_i$, $i=1,2$, be five $\gpoint$-arcs. Assume that they form two distinguished triangles $\triangle(\zg_i,\za_i,\zg)$, $i=1,2$, on the surfaces as in Figure \ref{figure:flip}.

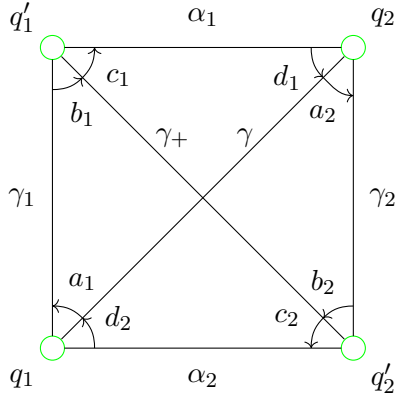
\begin{figure}[H]
\begin{center}
{\begin{tikzpicture}[scale=0.8]
\draw[green] (0,0) circle [radius=0.2];
\draw[green] (0,5) circle [radius=0.2];
\draw[green] (5,5) circle [radius=0.2];
\draw[green] (5,0) circle [radius=0.2];

\draw[-] (0,0.2) -- (0,4.8);
\draw[-] (5,0.2) -- (5,4.8);

\draw[-] (0.2,0) -- (4.8,0);
\draw[-] (0.2,5) -- (4.8,5);

\draw[-] (0.15,0.15) -- (4.85,4.85);
\draw[-] (0.15,4.85) -- (4.85,0.15);

\draw (3.2,3.5) node {$\zg$};
\draw (2,3.5) node {$\zg_+$};

\draw (2.5,5.5) node {$\za_1$};
\draw (2.5,-.5) node {$\za_2$};

\draw (5.5,2.5) node {$\zg_2$};
\draw (-.5,2.5) node {$\zg_1$};

\draw (-.5,5.5) node {$q^\prime_1$};
\draw (-.5,-.5) node {$q_1$};
\draw (5.5,-.5) node {$q^\prime_2$};
\draw (5.5,5.5) node {$q_2$};

\draw (0.5,1.1) node {$a_1$};
\draw (4.5,3.9) node {$a_2$};
\draw (0.5,3.9) node {$b_1$};
\draw (4.5,1.1) node {$b_2$};

\draw (1.1,0.5) node {$d_2$};
\draw (3.9,4.5) node {$d_1$};
\draw (3.9,0.5) node {$c_2$};
\draw (1.1,4.5) node {$c_1$};

\draw[->] (0.5,0.5) to [out=135,in=0] (0,0.7);
\draw[->] (4.5,4.5) to [out=315,in=180] (5,4.2);

\draw[<-] (0.5,4.5) to [out=225,in=0] (0,4.3);
\draw[->] (5,0.7) to [out=180,in=45] (4.5,.5);

\draw[->](4.5,.5) to [out=225,in=90] (4.3,0);
\draw[->](.5,4.5) to [out=45,in=270] (0.7,5);

\draw[->](.7,0) to [out=90,in=-45] (.5,.5);
\draw[->](4.3,5) to [out=270,in=135] (4.5,4.5);

\end{tikzpicture}}
\end{center}
\begin{center}
\caption{Flipping the diagonals in a quadrilateral formed by two distinguished triangles $\triangle(\zg_1,\za_1,\zg)$ and $\triangle(\zg_2,\za_2,\zg)$ on the surface corresponds to a homotopy push-out/pull-back in the derived category.} \label{figure:flip}
\end{center}
\end{figure}

Let $f_\zg$ be any grading of $\zg$. Then the following hold.

(1) The arc obtained by smoothing the crossing of $\zg_1$ and $\za_2$ at $q_1$ is homotopic to the arc obtained by smoothing the crossing of $\za_1$ and $\zg_2$ at $q_2$. Then, denoting this arc by $\zg_+$, the triangles $\triangle(\za_1,\zg_2,\zg_+)$ and $\triangle(\za_2,\zg_1,\zg_+)$ are both distinguished triangles on the surfaces.

(2) For $i=1,2$, there exist unique gradings $f_{\zg_+}$, $f_{\zg_i}$ and $f_{\za_i}$ of the arcs $\zg_+$, $\zg_i$ and $\za_i$ respectively,  such that the respective intersections at $q_i$ and $ q^\prime_i$ yield the following maps in the category $\cald^b(A)$
$$\left\{\begin{array}{rcll}
				a_i:& \P_{(\zg,f_\zg)} & \longrightarrow & \P_{(\zg_i,f_{\zg_i})} \\
				b_i:& \P_{(\zg_i,f_{\zg_i})} & \longrightarrow & \P_{(\zg_+,f_{\zg_+})}  \\
				c_i:&\P_{(\zg_+,f_{\zg_+})} & \longrightarrow & \P_{(\za_i,f_{\za_i})} \\
				d_i:& \P_{(\za_i,f_{\za_i})} & \longrightarrow & \P_{(\zg,f_\zg)}[1],
			\end{array}\right.$$
which are parts of the four distinguished triangles in Figure \ref{figure:push-out}, corresponding to the four distinguished triangles on the surfaces in Figure \ref{figure:flip}.

(3)  Every square in Figure \ref{figure:push-out} is commutative, in particular, $a_1\circ b_1=a_2\circ b_2$ and $(c\circ d :=) c_1\circ d_1=c_2\circ d_2$, and the following triangle is a distinguished triangle in $\cald^b(A)$,
\begin{equation}\label{eq:distinguished triangle}
\P_{(\zg,f_\zg)}\s{(a_1,a_2)}\longrightarrow \P_{(\zg_1,f_{\zg_1})}\oplus \P_{(\zg_2,f_{\zg_2})}\s{\left(
                       \begin{smallmatrix}
                         b_1 \\
                         -b_2\\
                       \end{smallmatrix}
                     \right)}\longrightarrow \P_{(\zg_+,f_{\zg_+})}\s{c\circ d}\longrightarrow {\P_{(\zg,f_\zg)}[1]}.
\end{equation}

(4)  The object $\P_{(\zg_+,f_{\zg_+})}$ is the homotopy push-out of the maps
$$a_i:\P_{(\zg,f_{\zg})} \longrightarrow \P_{(\zg_i,f_{\zg_i})}, i=1,2.$$
The object $\P_{(\zg,f_{\zg})}$ is the homotopy pull-back of the maps
$$b_i: \P_{(\zg_i,f_{\zg_i})} \longrightarrow \P_{(\zg_+,f_{\zg_+})}, i=1,2.$$

\begin{figure}[H]
\begin{center}
{\begin{tikzpicture}[scale=1.6]
\draw (1,0) node {$\P_{(\za_2,f_{\za_2})}$};
\draw (3,0) node {$\P_{(\za_2,f_{\za_2})}$};

\draw (1,1) node {$\P_{(\zg_2,f_{\zg_2})}$};
\draw (3,1) node {$\P_{(\zg_+,f_{\zg_+})}$};
\draw (5,1) node {$\P_{(\za_1,f_{\za_1})}$};
\draw (5,0) node {$\P_{(\zg,f_{\zg})}[1]$};
\draw (-1,1) node {$\P_{(\za_1,f_{\za_1})}[-1]$};

\draw (1,2) node {$\P_{(\zg,f_{\zg})}$};
\draw (3,2) node {$\P_{(\zg_1,f_{\zg_1})}$};
\draw (5,2) node {$\P_{(\za_1,f_{\za_1})}$};
\draw (-1,2) node {$\P_{(\za_1,f_{\za_1})}[-1]$};

\draw (1,3) node {$\P_{(\za_2,f_{\za_2})}[-1]$};
\draw (3,3) node {$\P_{(\za_2,f_{\za_2})}[-1]$};

\draw (2,2.12) node {$a_1$};
\draw (2,1.12) node {$b_2$};

\draw (4,2.12) node {$b_1c_1$};
\draw (4,1.12) node {$c_1$};
\draw (4,0.12) node {$d_2$};

\draw (1.22,.5) node {$b_2c_2$};
\draw (1.12,1.5) node {$a_2$};
\draw (3.12,1.5) node {$b_1$};
\draw (3.12,.5) node {$c_2$};
\draw (5.12,.5) node {$d_1$};
\draw (1.35,2.5) node {\tiny$-d_2[-1]$};
\draw (0.1,2.12) node {\tiny$-d_1[-1]$};
\draw (3.6,2.5) node {\tiny$-d_2[-1]\circ a_1$};
\draw (0.1,0.8) node {\tiny$-d_1[-1]\circ a_2$};
\draw[-] (-1.05,1.2) -- (-1.05,1.8);
\draw[-] (-1,1.2) -- (-1,1.8);

\draw[-] (5.05,1.2) -- (5.05,1.8);
\draw[-] (5,1.2) -- (5,1.8);

\draw[-] (1.5,-0.02) -- (2.4,-0.02);
\draw[-] (1.5,0.03) -- (2.4,0.03);

\draw[-] (1.7,3.02) -- (2.3,3.02);
\draw[-] (1.7,2.97) -- (2.3,2.97);

\draw[->] (1.4,2) -- (2.5,2);
\draw[->] (1.4,1) -- (2.5,1);

\draw[->] (-0.3,2) -- (.5,2);
\draw[->] (-0.3,1) -- (.5,1);

\draw[->] (3.5,2) -- (4.5,2);
\draw[->] (3.5,1) -- (4.5,1);
\draw[->] (3.5,0) -- (4.5,0);

\draw[->] (1,0.7) -- (1,0.2);
\draw[->] (1,1.7) -- (1,1.2);
\draw[->] (1,2.7) -- (1,2.2);

\draw[->] (3,0.7) -- (3,0.2);
\draw[->] (3,1.7) -- (3,1.2);
\draw[->] (3,2.7) -- (3,2.2);
\draw[->] (5,0.7) -- (5,0.2);
\end{tikzpicture}}
\end{center}
\begin{center}
\caption{Flipping the diagonals in a quadrilateral formed by two distinguished triangles $\triangle(\zg_1,\za_1,\zg)$ and $\triangle(\zg_2,\za_2,\zg)$ on the surface corresponds to a  homotopy push-out/pull-back in the derived category.}\label{figure:push-out}
\end{center}
\end{figure}

\end{proposition}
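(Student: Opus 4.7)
For part (1), the argument is topological: the union of the two given distinguished triangles on the surface traces out a (possibly immersed) quadrilateral with sides $\zg_1, \za_1, \zg_2, \za_2$ whose interior contains neither punctures nor boundary components, since neither constituent triangle does. Both of the claimed smoothings---at $q_1$ and at $q_2$---represent the opposite diagonal of this quadrilateral and are therefore homotopic; let $\zg_+$ be their common class. That $\triangle(\za_1,\zg_2,\zg_+)$ and $\triangle(\za_2,\zg_1,\zg_+)$ are then distinguished triangles on the surface follows directly from Proposition-Definition \ref{prop-def-distinguished triangle on the surface}.

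For part (2), I would apply Lemma \ref{lemma:triangles as triangles} to each of the four distinguished triangles on the surface, starting from $f_\zg$. For the triangles $\triangle(\zg_i,\za_i,\zg)$, the two matching conditions on grading values uniquely determine $f_{\zg_i}$ and $f_{\za_i}$ and, after rotating the resulting triangle, produce the distinguished triangles on the top row and right column of Figure \ref{figure:push-out}. A second application of the lemma to each $\triangle(\za_i,\zg_j,\zg_+)$ (with $\{i,j\}=\{1,2\}$) then yields a candidate grading on $\zg_+$ from each side. The essential point is that these two candidates agree: their discrepancy can be expressed, via Lemma \ref{lemma:equalities} applied around each of the four distinguished triangles, as a telescoping sum over intersections on $\za_1, \zg_2, \za_2, \zg_1$ and on $\zg$, in which each contribution is cancelled by its partner from a neighbouring triangle, so the total vanishes.

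For part (3), I would apply the octahedral axiom to the composition
\[
\P_{(\zg,f_\zg)} \xrightarrow{a_1} \P_{(\zg_1,f_{\zg_1})} \xrightarrow{b_1} \P_{(\zg_+,f_{\zg_+})},
\]
using the distinguished triangles produced in part (2) from $\triangle(\zg_1,\za_1,\zg)$ and $\triangle(\za_2,\zg_1,\zg_+)$. The octahedron simultaneously supplies commutativity of the squares involving these two triangles in Figure \ref{figure:push-out} and a distinguished triangle completing the composite $b_1 \circ a_1$, whose third term is forced to be $\P_{(\za_2,f_{\za_2})}[-1]$. The remaining check is that this octahedral completion matches the distinguished triangle obtained from $\triangle(\zg_2,\za_2,\zg)$ combined with $\triangle(\za_1,\zg_2,\zg_+)$; by Theorem \ref{theorem:object and map in derived category2} the Hom-spaces $\Hom(\P_{(\zg,f_\zg)}, \P_{(\zg_+,f_{\zg_+})})$ and $\Hom(\P_{(\zg_+,f_{\zg_+})}, \P_{(\zg,f_\zg)}[1])$ are each one-dimensional, spanned by the unique interior intersection $\zg \cap \zg_+$. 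This forces $a_1 b_1$ and $a_2 b_2$ (respectively $c_1 d_1$ and $c_2 d_2$) to coincide up to nonzero scalars, which can be absorbed into the (non-canonical) choice of the intersection morphisms $a_i, b_i, c_i, d_i$. The resulting diagonal of the octahedron is \eqref{eq:distinguished triangle}.

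Part (4) is then immediate: the distinguished triangle \eqref{eq:distinguished triangle} is, by definition, a homotopy pushout/pullback presentation, exhibiting $\P_{(\zg_+,f_{\zg_+})}$ as the homotopy pushout of $a_1, a_2$ and $\P_{(\zg,f_\zg)}$ as the homotopy pullback of $b_1, b_2$. The main obstacle I anticipate is the coherence check on gradings in part (2) together with the scalar-matching step in part (3); both reduce to essentially local bookkeeping, controlled by Lemmas \ref{lemma:equalities} and \ref{lemma:triangles as triangles} and by Theorem \ref{theorem:object and map in derived category2}.
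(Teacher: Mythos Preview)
Your treatment of parts (1), (2), and (4) matches the paper's argument closely: the paper also reduces (1) to iterated smoothings and Proposition--Definition~\ref{prop-def-distinguished triangle on the surface}, constructs the gradings in (2) exactly by fixing $f_{\zg_i}$ and $f_{\za_i}$ from $f_\zg$ and then checking via Lemma~\ref{lemma:equalities} that the two induced gradings on $\zg_+$ coincide, and deduces (4) immediately from the distinguished triangle \eqref{eq:distinguished triangle}.

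The divergence, and the gap, is in part (3). The paper does \emph{not} prove \eqref{eq:distinguished triangle} via the octahedral axiom; it invokes \cite[Theorem~4.1]{OPS18} (see also \cite{CPS19}), which directly identifies the mapping cone of $(a_1,a_2)$ with $\P_{(\zg_+,f_{\zg_+})}$ in terms of the surface combinatorics. The commutativity $a_1b_1=a_2b_2$ and $c_1d_1=c_2d_2$ is then read off as a consequence. Your route instead needs commutativity of the central square as \emph{input} to the octahedral argument, and you try to force it by asserting that $\Hom(\P_{(\zg,f_\zg)},\P_{(\zg_+,f_{\zg_+})})$ is one-dimensional, spanned by ``the unique interior intersection $\zg\cap\zg_+$''. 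This is not justified in the generality of the proposition: as the paper stresses in Remark~\ref{remark:distinguished triangle}(1), a distinguished triangle on the surface need not be a real (embedded) triangle, so the sides of the quadrilateral---and hence the two diagonals $\zg$ and $\zg_+$---may well have additional interior intersections or shared endpoints. In that case the relevant Hom-space can have dimension greater than one, and your scalar-matching step breaks down. (Even in the embedded case you would separately need that both $a_1b_1$ and $a_2b_2$ are \emph{nonzero}, which again is part of what \cite[Theorem~4.1]{OPS18} provides.)

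In short: either cite \cite[Theorem~4.1]{OPS18} for \eqref{eq:distinguished triangle} as the paper does, or supply an independent proof that $a_1b_1=a_2b_2$ which does not rely on $\zg$ and $\zg_+$ meeting only once.
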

\begin{proof}
(1)  Since $\triangle(\zg_1,\za_1,\zg)$ is a distinguished triangle on the surface, $\zg_1$ is the smoothing of $\zg$ and $\za_1$ at $q_2$.  So the smoothing of $\zg_1$ and $\za_2$ at $q_1$ can be obtained in two steps.  First by smoothing $\za_1$ and $\zg$ at $q_2$, and then smoothing the resulting arc with $\za_2$ at $q_1$. Similarly, the smoothing of $\za_1$ and $\zg_2$ at $q_2$ can be obtained in two steps. Namely,  by first smoothing $\zg$ and $\za_2$ at $q_1$, and then smoothing the resulting arc with $\za_1$ at $q_2$.
That is, they are both the smoothing of the arcs $\za_1$, $\zg$ and $\za_2$ at the two points $q_2$ and $q_1$, up to the order of making the smoothing, which does not change the final result.
So both successive smoothings of crossings give rise to the arc $\zg_+$.
Furthermore, by  construction and by Proposition-Definition \ref{prop-def-distinguished triangle on the surface}, the triangles $\triangle(\za_1,\zg_2,\zg_+)$ and $\triangle(\za_2,\zg_1,\zg_+)$ are both distinguished triangles on the surface.

(2) For $i=1,2$, let $f_{\zg_i}$ and $f_{\za_i}$ be the gradings of the arcs $\zg_i$ and $\za_i$ respectively such that $f_{\zg_i}(s^{\zg_i}_{q_i})=f_{\zg}(s^{\zg}_{q_i})$ and $f_{\za_i}(s^{\za_i}_{q^\prime_i})=f_{\zg_i}(s^{\zg_i}_{q^\prime_i})$.
Since $\triangle (\zg,\zg_i,\za_i), i=1,2$ are distinguished triangles on the surface, by lemma \ref{lemma:triangles as triangles}, we have $f_{\za_1}(s^{\za_1}_{q_2})=f_{\zg}(s^{\zg}_{q_2})-1$ and $f_{\za_2}(s^{\za_2}_{q_1})=f_{\zg}(s^{\zg}_{q_1})-1$.
Let $f_{\zg_+}$ be the grading of $\zg_+$ such that $f_{\zg_+}(s^{\zg_+}_{q^\prime_1})=f_{\zg_1}(s^{\zg_1}_{q^\prime_1})
(=f_{\za_1}(s^{\za_1}_{q^\prime_1}))$.
To sum up, we have the following equalities
\begin{gather}
f_{\zg_+}(s^{\zg_+}_{q'_1})=f_{\za_1}(s^{\za_1}_{q'_1}),
f_{\za_1}(s^{\za_1}_{q_2})=f_{\zg_2}(s^{\zg_2}_{q_2})-1;\label{eq:r1}\\
f_{\zg_1}(s^{\zg_1}_{q'_1})=f_{\zg_+}(s^{\zg_+}_{q'_1}),
f_{\za_2}(s^{\za_2}_{q_1})=f_{\zg_1}(s^{\zg_1}_{q_1})-1.\label{eq:r2}
\end{gather}

Since $\triangle (\zg_+,\za_1,\zg_2)$ and $\triangle (\zg_1,\zg_+,\za_1)$ are distinguished triangles on the surfaces, then by the above equalities and Lemma \ref{lemma:equalities}, we have
\begin{equation}
f_{\zg_+}(s^{\zg_+}_{q^\prime_2})=f_{\zg_2}(s^{\zg_2}_{q^\prime_2}).
\end{equation}
We then have the following maps in $\cald^b(A)$ arising from the intersections $q_i,q^\prime_i, i=1,2$,
$$\left\{\begin{array}{rcll}
				a_i:& \P_{(\zg,f_\zg)} & \longrightarrow & \P_{(\zg_i,f_{\zg_i})} \\
				b_i:& \P_{(\zg_i,f_{\zg_i})} & \longrightarrow & \P_{(\zg_+,f_{\zg_+})}  \\
				c_i:&\P_{(\zg_+,f_{\zg_+})} & \longrightarrow & \P_{(\za_i,f_{\za_i})} \\
				d_i:& \P_{(\za_i,f_{\za_i})} & \longrightarrow & \P_{(\zg,f_\zg)}[1].
			\end{array}\right.$$
Clearly, these gradings $f_{\zg_+}$, $f_{\zg_i}$ and $f_{\za_i}$ are unique with respect to the given grading $f_\zg$.

 Then by Lemma~\ref{lemma:triangles as triangles}, the four distinguished triangles $\triangle(\zg_i,\za_i,\zg)$, $i=1,2$, $\triangle(\za_1,\zg_2,\zg_+)$ and $\triangle(\za_2,\zg_1,\zg_+)$ on the surface give rise to the four distinguished triangles in $\cald^b(A)$ in the diagram in Figure \ref{figure:push-out}.

(3) The fact that the triangle is distinguished follows from \cite[Theorem 4.1]{OPS18} (see also \cite{CPS19}). In particular, we have equalities $a_1\circ b_1=a_2\circ b_2$ and $c_1\circ d_1=c_2\circ d_2$, and every square in Figure \ref{figure:push-out} are commutative.

(4)  From above, the following square is commutative, and it gives rise to the distinguished triangle \eqref{eq:distinguished triangle}, so it is a homotopy cartesian square.
\begin{center}
\begin{tikzpicture}[scale=1.6]
\draw (1,1) node {$\P_{(\zg_2,f_{\zg_2})}$};
\draw (3,1) node {$\P_{(\zg_+,f_{\zg_+})}$};
\draw (1,2) node {$\P_{(\zg,f_{\zg})}$};
\draw (3,2) node {$\P_{(\zg_1,f_{\zg_1})}$};

\draw (2,2.12) node {$a_1$};
\draw (2,1.12) node {$b_2$};
\draw (1.12,1.5) node {$a_2$};
\draw (3.12,1.5) node {$b_1$};

\draw[->] (1.4,2) -- (2.5,2);
\draw[->] (1.4,1) -- (2.5,1);

\draw[->] (1,1.7) -- (1,1.2);
\draw[->] (3,1.7) -- (3,1.2);
\end{tikzpicture}
\end{center}
 Therefore $\P_{(\zg_+,f_{\zg_+})}$ is the homotopy push-out of $a_1$ and $a_2$, and $\P_{(\zg,f_{\zg})}$ is the homotopy pull-back of $b_1$ and $b_2$.
\end{proof}

\section{Silting mutation}\label{Section:Silting mutations}
In this section, we define the mutation of a silting dissection over a graded marked surface $(S,M,\zD_A)$, and we show that it gives rise to  a geometric interpretation of the mutation of silting objects in $K^{b}(\proj A)$.

\subsection{Mutation of silting dissections}\label{Mutation of silting dissections}

Let $(\zg,f_\zg)$ be a graded arc in a silting dissection $(\zD,f)$ (c.f. Definition \ref{definition:silting dissection}) over the graded marked surface. Let $q_1$ and $q_2$ be the endpoints of $\zg$, which may  coincide. For $i=1, 2$, denote by $\zg_i\neq \zg$ the $\gpoint$-arc in $\zD$ with endpoint $q_i$, which is the first anti-clockwise $\gpoint$-arc in $\zD$  following $\zg$ based at $q_i$. Note that $\zg_i$ may not exist. Then we have three possible local configurations, see Figure \ref{Figure:Possible positions of a pre-silting arc}, where we only draw the arcs $\gamma_i$ such that $f_{\zg_i}(s_{q_{i}}^{\zg_i})=f_{\zg}(s_{q_{i}}^{\zg})$.
More precisely, picture (I) means that for both $i=1, 2$, $\zg_i\neq \zg$ and $f_{\zg_i}(s_{q_{i}}^{\zg_i})=f_{\zg}(s_{q_{i}}^{\zg})$. Picture (II) means that there exists exactly one $i$ such that $\zg_i\neq \zg$ and $f_{\zg_i}(s_{q_{i}}^{\zg_i})=f_{\zg}(s_{q_{i}}^{\zg})$. Picture (III) means that there exists no $i$ such that $\zg_i\neq \zg$ and $f_{\zg_i}(s_{q_{i}}^{\zg_i})=f_{\zg}(s_{q_{i}}^{\zg})$.
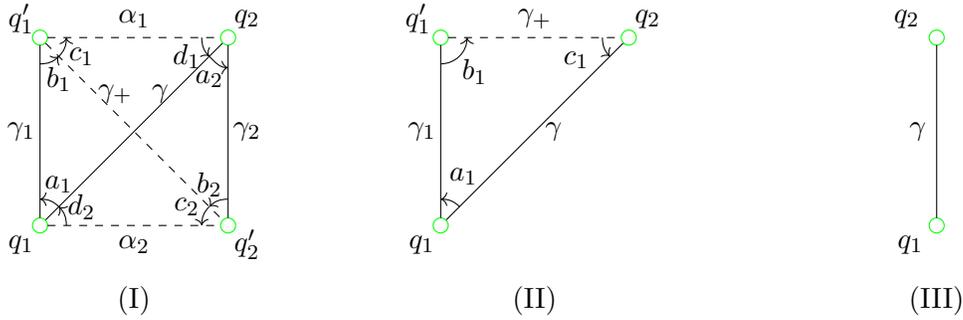
\begin{figure}[H]
\begin{center}
{\begin{tikzpicture}[scale=0.5]
\draw[green] (0,0) circle [radius=0.2];
\draw[green] (0,5) circle [radius=0.2];
\draw[green] (5,5) circle [radius=0.2];
\draw[green] (5,0) circle [radius=0.2];

\draw[-] (0,0.2) -- (0,4.8);
\draw[-] (5,0.2) -- (5,4.8);

\draw[dashed,-] (0.2,0) -- (4.8,0);
\draw[dashed,-] (0.2,5) -- (4.8,5);

\draw[-] (0.15,0.15) -- (4.85,4.85);
\draw[dashed,-] (0.15,4.85) -- (4.85,0.15);

\draw (3.2,3.5) node {$\zg$};
\draw (2,3.5) node {$\zg_+$};

\draw (2.5,5.5) node {$\za_1$};
\draw (2.5,-.5) node {$\za_2$};

\draw (5.5,2.5) node {$\zg_2$};
\draw (-.5,2.5) node {$\zg_1$};

\draw (-.5,5.5) node {$q^\prime_1$};
\draw (-.5,-.5) node {$q_1$};
\draw (5.5,-.5) node {$q^\prime_2$};
\draw (5.5,5.5) node {$q_2$};

\draw (0.5,1.1) node {$a_1$};
\draw (4.5,3.9) node {$a_2$};
\draw (0.5,3.9) node {$b_1$};
\draw (4.5,1.1) node {$b_2$};

\draw (1.1,0.5) node {$d_2$};
\draw (3.9,4.5) node {$d_1$};
\draw (3.9,0.5) node {$c_2$};
\draw (1.1,4.5) node {$c_1$};

\draw[->] (0.5,0.5) to [out=135,in=0] (0,0.7);
\draw[->] (4.5,4.5) to [out=315,in=180] (5,4.2);

\draw[<-] (0.5,4.5) to [out=225,in=0] (0,4.3);
\draw[->] (5,0.7) to [out=180,in=45] (4.5,.5);

\draw[->](4.5,.5) to [out=225,in=90] (4.3,0);
\draw[->](.5,4.5) to [out=45,in=270] (0.7,5);

\draw[->](.7,0) to [out=90,in=-45] (.5,.5);
\draw[->](4.3,5) to [out=270,in=135] (4.5,4.5);
\draw (2.5,-2) node {(I)};
\end{tikzpicture}}
\qquad\qquad
{\begin{tikzpicture}[scale=0.5]
\draw[green] (0,0) circle [radius=0.2];
\draw[green] (0,5) circle [radius=0.2];
\draw[green] (5,5) circle [radius=0.2];

\draw[-] (0,0.2) -- (0,4.8);

\draw[-,dashed] (0.2,5) -- (4.8,5);

\draw[-] (0.15,0.15) -- (4.85,4.85);

\draw (3,2.5) node {$\zg$};
\draw (-.5,2.5) node {$\zg_1$};
\draw (2.5,5.5) node {$\zg_+$};

\draw (-.6,5.5) node {$q^\prime_1$};
\draw (-.5,-.5) node {$q_1$};
\draw (5.5,5.5) node {$q_2$};

\draw (0.6,1.3) node {$a_1$};
\draw[->] (0.5,0.5) to [out=135,in=0] (0,0.7);

\draw[->] (0,4.3) to [out=0,in=270] (0.7,5);
\draw (0.9,4.1) node {$b_1$};

\draw[->](4.3,5) to [out=270,in=135] (4.5,4.5);
\draw (3.6,4.4) node {$c_1$};

\draw (2.5,-2) node {(II)};
\end{tikzpicture}}
\qquad\qquad
{\begin{tikzpicture}[scale=0.5]
\draw[green] (2.5,0) circle [radius=0.2];
\draw[green] (2.5,5) circle [radius=0.2];

\draw[-] (2.5,0.2) -- (2.5,4.8);

\draw (2,2.5) node {$\zg$};
\draw (6,6) node {};
\draw (-1,-1) node {};
\draw (1.7,5.5) node {$q_2$};
\draw (1.8,-.5) node {$q_1$};

\draw (2.5,-2) node {(III)};
\end{tikzpicture}}
\end{center}
\begin{center}
\caption{Possible positions of a pre-silting arc $(\zg,f_\zg)$ illustrating the left silting mutation. }\label{Figure:Possible positions of a pre-silting arc}
\end{center}
\end{figure}

We now describe the  other arcs appearing in Figure \ref{Figure:Possible positions of a pre-silting arc}.
In picture (I), $\za_i$ is the smoothing of $\zg_i$ and $\zg$ at the intersection $q_i$, and $\zg_+$ is the smoothing of $\zg_1$ and $\za_2$ at $q_1$, or equivalently, the smoothing of $\zg_2$ and $\za_1$ at $q_2$ (see Proposition \ref{proposition:flip and push-out}(1)). In picture (II), $\zg_+$ is the smoothing of $\zg_1$ and $\zg$ at $q_1$. So in both pictures, all the triangles are distinguished triangles on the surface.

Dually, we may consider the $\gpoint$-arcs $\zg_i$ in $\zD$ based at $q_i$ which directly follow $\zg$ in the clockwise direction. In this case there are also three possible local configurations which we describe in Figure \ref{Figure:Possible positions of a pre-silting arc-left}.

\begin{figure}[H]
\begin{center}
{\begin{tikzpicture}[scale=0.5]
\draw[green] (0,0) circle [radius=0.2];
\draw[green] (0,5) circle [radius=0.2];
\draw[green] (5,5) circle [radius=0.2];
\draw[green] (5,0) circle [radius=0.2];

\draw[-] (0,0.2) -- (0,4.8);
\draw[-] (5,0.2) -- (5,4.8);

\draw[dashed,-] (0.2,0) -- (4.8,0);
\draw[dashed,-] (0.2,5) -- (4.8,5);

\draw[dashed,-] (0.15,0.15) -- (4.85,4.85);
\draw[-] (0.15,4.85) -- (4.85,0.15);

\draw (3,3.5) node {$\zg_-$};
\draw (2,3.5) node {$\zg$};

\draw (2.5,5.5) node {$\za_2$};
\draw (2.5,-.5) node {$\za_1$};

\draw (5.5,2.5) node {$\zg_2$};
\draw (-.5,2.5) node {$\zg_1$};

\draw (-.5,5.5) node {$q_2$};
\draw (-.5,-.5) node {$q^\prime_1$};
\draw (5.5,-.5) node {$q_1$};
\draw (5.5,5.5) node {$q^\prime_2$};






\draw (2.5,-2) node {(I$^\prime$)};
\end{tikzpicture}}
\qquad\qquad
{\begin{tikzpicture}[scale=0.5]
\draw[green] (0,0) circle [radius=0.2];
\draw[green] (0,5) circle [radius=0.2];
\draw[green] (5,0) circle [radius=0.2];

\draw[-] (0,0.2) -- (0,4.8);

\draw[-,dashed] (0.2,0) -- (4.8,0);

\draw[-] (0.15,4.85) -- (4.85,0.15);

\draw (2,3.5) node {$\zg$};

\draw (2.5,-.5) node {$\zg_-$};

\draw (-.5,2.5) node {$\zg_1$};

\draw (-.6,5.5) node {$q_2$};
\draw (-.5,-.5) node {$q^\prime_1$};
\draw (5.5,-.5) node {$q_1$};

\draw (2.5,-2) node {($\text{II}^\prime$)};
\end{tikzpicture}}
\qquad\qquad
{\begin{tikzpicture}[scale=0.4]
\draw[green] (2.5,0) circle [radius=0.2];
\draw[green] (2.5,5) circle [radius=0.2];

\draw[-] (2.5,0.2) -- (2.5,4.8);

\draw (2,2.5) node {$\zg$};
\draw (6,6) node {};
\draw (-1,-1) node {};
\draw (1.7,5.5) node {$q_2$};
\draw (1.8,-.5) node {$q_1$};

\draw (2.5,-2) node {(III$^\prime$)};
\end{tikzpicture}}
\end{center}
\begin{center}
\caption{Possible positions of a pre-silting arc $(\zg,f_\zg)$ illustrating the right silting mutation.}\label{Figure:Possible positions of a pre-silting arc-left}
\end{center}
\end{figure}
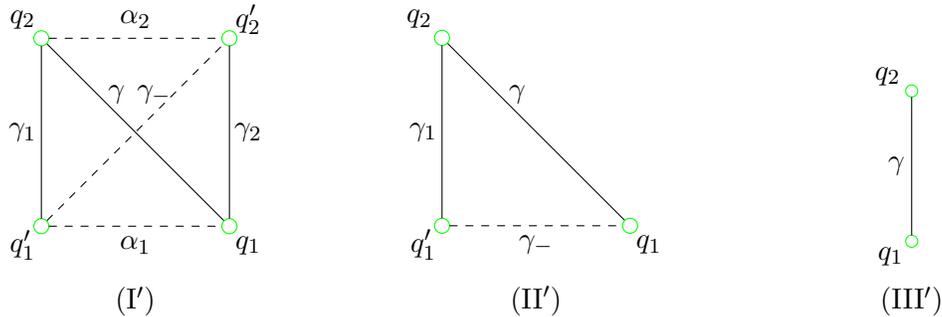

\begin{definition}[Mutation of silting dissections]\label{definition:mutation of silting dissections}
For the three cases in Figure \ref{Figure:Possible positions of a pre-silting arc}, we define the left mutation $\mu^+_{(\zg,f_\zg)}((\Delta,f))$  of the silting dissection $(\zD,f)$ at the graded arc $(\zg,f_\zg)$ as follows:  $$\mu^+_{(\zg,f_\zg)}((\Delta,f))= (\zD^+_\zg,f^+_\zg):=\{\mu^+_{(\zg,f_\zg)}((\za,f_\za)), (\za,f_\za)\in (\Delta,f)\},$$
where
$$\mu^+_{(\zg,f_\zg)}((\za,f_\za))=\left\{\begin{array}{ll}
						(\zg_+,f_{\zg_+}) & \textrm{if}~ (\za,f_\za)=(\zg,f_\zg)~; \\
                        (\za,f_\za) & \textrm{otherwise}~. \\
					\end{array}\right.$$
with the following conventions:

(1) For $(\zg,f_\zg)$ in (I) and (II), the arc $\zg_+$ is as shown in Figure \ref{Figure:Possible positions of a pre-silting arc} (I) and (II) respectively, and the grading $f_{\zg_+}$ is such that $f_{\zg_+}(s_{q'_{1}}^{\zg_+})=f_{\zg_1}(s_{q'_{1}}^{\zg_1})$.

(2) For $(\zg,f_\zg)$ in (III), let $\zg_+=\zg$, and set the grading $f_{\zg_+}=f_{\zg}[1]$.

Dually, we define the right mutation $\mu^-_{(\zg,f_\zg)}((\Delta,f))$.
\end{definition}
\begin{remark}
Note that the setting in case (I) in the above definition coincides with the setting in Proposition \ref{proposition:flip and push-out}. So we have maps $a_i, b_i, c_i,$ and $d_i$ as in Proposition \ref{proposition:flip and push-out} (2). And we also have a distinguished triangle \eqref{eq:distinguished triangle} formed by these maps as shown in the third part of the proposition. On the other hand, the setting in case (II) coincides with the setting in Lemma \ref{lemma:triangles as triangles}. So we have maps $a_i, b_i$ and $c_i$ and a distinguished triangle formed by them as stated in the lemma.
\end{remark}

\begin{proposition}\label{proposition:compatibility}
The mutations $\mu^+_{(\zg,f_\zg)}((\Delta,f))$ and $\mu^-_{(\zg,f_\zg)}((\Delta,f))$ of a silting dissection $(\zD,f)$ are again silting dissections.
\end{proposition}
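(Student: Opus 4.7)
The plan is to verify the two defining requirements of a silting dissection for $\mu^+_{(\zg, f_\zg)}((\zD, f))$, the case of $\mu^-$ being formally dual: namely, that the underlying collection of arcs $\zD^+_\zg$ is an admissible $\gpoint$-dissection of $(S,M)$, and that the associated graded object $\P_{(\zD^+_\zg, f^+_\zg)}$ is silting in $K^b(\proj A)$.

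The first point is purely combinatorial-geometric and I would address it case by case. In case (III) the underlying dissection is not altered. In case (I), $\zg$ and $\zg_+$ are the two diagonals of the (possibly degenerate) quadrilateral bounded by $\zg_1, \za_1, \zg_2, \za_2$, so interchanging them preserves both the total number of arcs and the property that each enclosed region contains exactly one $\rpoint$-point from $M^{\rpoint}\cup P^{\rpoint}$. Case (II) is analogous, using the distinguished triangle $\triangle(\zg_1, \zg, \zg_+)$ in place of a quadrilateral; here $\zg_+$ is obtained by smoothing inside a region of $\zD$, so no new admissibility obstructions can arise.

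For the silting property, the strategy is to identify $\P_{(\zD^+_\zg, f^+_\zg)}$ as the left silting mutation of $\P_{(\zD, f)}$ at the summand $\P_{(\zg, f_\zg)}$ in the sense of Proposition-Definition \ref{Definition:mutation of silting objects}; the conclusion then follows from the general theory of \cite{AI12}. The candidate exchange triangles are supplied by Section \ref{section:Distinguished triangles on the surface}: the triangle \eqref{eq:distinguished triangle} of Proposition \ref{proposition:flip and push-out} in case (I), the triangle \eqref{triangles as triangles-exchange triangle} of Lemma \ref{lemma:triangles as triangles} in case (II), and the trivial triangle $\P_{(\zg, f_\zg)}\longrightarrow 0 \longrightarrow \P_{(\zg, f_\zg[1])}\longrightarrow \P_{(\zg, f_\zg)}[1]$ in case (III), whose cone $\P_{(\zg, f_\zg[1])}$ is exactly $\P_{(\zg_+, f_{\zg_+})}$ by the convention adopted in Definition \ref{definition:mutation of silting dissections}.

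The main obstacle is to check that the middle arrow in each of these triangles is a minimal left $\add(M')$-approximation of $\P_{(\zg, f_\zg)}$, where $M' = \bigoplus_{\za \in \zD,\, \za\neq \zg} \P_{(\za, f_\za)}$. Since arcs of the dissection $\zD$ do not meet in the interior of $S$, Theorem \ref{theorem:object and map in derived category2}(2) implies that every morphism from $\P_{(\zg, f_\zg)}$ to an indecomposable summand $\P_{(\za, f_\za)}$ of $M'$ arises from a boundary $\gpoint$-intersection at one of the endpoints $q_1, q_2$, and the condition that such a morphism lies in degree zero selects precisely the first anticlockwise $\gpoint$-successor of $\zg$ at $q_i$ whose grading equals $f_\zg(s^\zg_{q_i})$. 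By the very definition of the three cases, these successors are exactly $\zg_1$ and/or $\zg_2$ (and none in case (III)), so the degree-zero morphisms from $\P_{(\zg, f_\zg)}$ into $\add M'$ are spanned by the maps $a_i$ entering the proposed approximation; factorisation is therefore automatic, and minimality follows from the indecomposability of each $\P_{(\zg_i, f_{\zg_i})}$ together with the fact that none of the $a_i$ is a split epimorphism. Combined with the exchange triangles above and the theory of \cite{AI12}, this yields that $\P_{(\zD^+_\zg, f^+_\zg)}$ is silting, completing the proof.
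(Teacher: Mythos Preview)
Your approach is correct but takes a genuinely different route from the paper's. The paper establishes the silting property of $\P_{(\zD^+_\zg, f^+_\zg)}$ by a direct, elementary verification that $\Hom(\P_{(\za,f_\za)}, \P_{(\zb,f_\zb)}[m]) = 0$ for every $m > 0$ and every pair of graded arcs in the mutated dissection, running through Cases (I)--(III) with several subcases and using only the grading equalities from Lemma~\ref{lemma:equalities} and Proposition~\ref{proposition:flip and push-out} together with the hypothesis that $(\zD,f)$ is silting. You instead show that the first map in the candidate exchange triangle is a left $\add(M')$-approximation and then invoke \cite{AI12}; this is precisely the content of the subsequent Theorem~\ref{theorem: compatibilitymutations}, so your argument effectively merges Proposition~\ref{proposition:compatibility} and Theorem~\ref{theorem: compatibilitymutations} into a single step. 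Your route is more conceptual and avoids the repetitive case analysis; the paper's route is self-contained (it does not need the general mutation theory for this proposition) and keeps the geometric statement logically prior to, and independent of, the compatibility statement.

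Two points in your approximation paragraph deserve tightening, though neither breaks the argument. First, the degree-zero morphisms from $\P_{(\zg, f_\zg)}$ into $\add M'$ are not literally \emph{spanned} by the $a_i$: there may well be further arcs of $\zD$ meeting $\zg$ at $q_i$ with matching grading. What is true---and what you need---is that every such basis morphism \emph{factors through} the relevant $a_i$, because $\zg_i$ is by construction the first anticlockwise successor of $\zg$ at $q_i$; this factorisation is exactly what the paper spells out in its proof of Theorem~\ref{theorem: compatibilitymutations}. Second, your minimality justification is not quite right, but minimality is in fact unnecessary for your purposes: the result from \cite{AI12} that $M' \oplus N$ is silting holds for any left $\add(M')$-approximation with cone $N$, and here the cone is already identified as $\P_{(\zg_+, f_{\zg_+})}$ by the explicit triangles of Section~\ref{section:Distinguished triangles on the surface}.
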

\begin{proof}
We only prove the case for the left mutation $\mu^+_{(\zg,f_\zg)}((\Delta,f))$, the proof for the right mutation $\mu^-_{(\zg,f_\zg)}((\Delta,f))$ is analogous.

By the construction of $\zD^+_\zg$, in all three cases (I), (II), and (III), the only possible intersections between the arcs in $\zD^+_\zg$ are at the endpoints.  The quadrilateral obtained from smoothing the endpoint crossings of the arcs $\alpha_1, \gamma_2, \alpha_2, $ and $ \gamma_1 $ in case (I) is such that the collection $\zD^+_\zg$ does not cut out any subsurface not containing a $\rpoint$-marked points in their interior. Thus changing $\gamma$ to $\gamma_+$ in $\zD$ implies that  $\zD^+_\zg$ is again an admissible collection. Similarly, in case (II)  the collection $\zD^+_\zg$ is admissible and in case (III) we have $\zD^+_\zg = \zD$.   Furthermore, since  $\zD^+_\zg$ and $\zD$  have the same number of arcs, $\zD^+_\zg$ is maximal, and therefore it is an admissible $\gpoint$-dissection.

Now to prove that $\mu^+_{(\zg,f_\zg)}((\Delta,f))$ is a silting dissection, we only need to show that $\Hom_{\cald^b(A)}(\P_{(\za,f_\za)},\P_{(\zb,f_\zb)}[m])=0$, for any positive integer $m$ and any two graded arcs $(\za,f_\za)$, $(\zb,f_\zb)$ in   $\mu^+_{(\zg,f_\zg)}((\Delta,f))$. This is clearly true when neither $(\za,f_\za)$ nor $(\zb,f_\zb)$ coincide with $(\zg_+,f_{\zg_+})$, since then $(\za,f_\za)$ and $(\zb,f_\zb)$  both are in the silting dissection $(\zD,f)$. Now let one of $(\za,f_\za)$ and $(\zb,f_\zb)$ coincide with $(\zg_+,f_{\zg_+})$. We assume $(\zg_+,f_{\zg_+})=(\za,f_\za)$ in the following.  The case  $(\zg_+,f_{\zg_+})=(\zb,f_\zb)$ is proved analogously.

Case 1. For  $\zg$ as in Figure \ref{Figure:Possible positions of a pre-silting arc} (I), suppose that there exists a graded arc $(\zb,f_\zb)$ in $\mu^+_{(\zg,f_\zg)}((\Delta,f))$ and a non-zero map $d$ from $\P_{(\zg_+,f_{\zg_+})}$ to $\P_{(\zb,f_\zb)}[m]$. Then the map $d$ should arise from the endpoints of $\zg_+$, i.e., $q^\prime_1$ or $q^\prime_2$.
Note that
if $(\zb,f_\zb)=(\zg_+,f_{\zg_+})$, then $\zg_+$ is a loop. So $q^\prime_1  =q^\prime_2$, and
$f_{\zg_+}(s^{\zg_+}_{q^\prime_1})=f_{\zg_+}(s^{\zg_+}_{q^\prime_2})-m$
or $f_{\zg_+}(s^{\zg_+}_{q^\prime_1})=f_{\zg_+}(s^{\zg_+}_{q^\prime_2})+m.$
On the other hand, by the proof of the Proposition \ref{proposition:flip and push-out} (2), we have
$$f_{\zg_1}(s^{\zg_1}_{q^\prime_1})=f_{\zg_+}(s^{\zg_+}_{q^\prime_1}) \mbox{ and }
f_{\zg_2}(s^{\zg_2}_{q^\prime_2})=f_{\zg_+}(s^{\zg_+}_{q^\prime_2}).$$
Thus
$f_{\zg_1}(s^{\zg_1}_{q^\prime_1})=f_{\zg_2}(s^{\zg_2}_{q^\prime_2})-m$
or $f_{\zg_1}(s^{\zg_1}_{q^\prime_1})=f_{\zg_2}(s^{\zg_2}_{q^\prime_2})+m$. This implies that  there will be  a non-zero map from $\P_{(\zg_1,f_{\zg_1})}$ to $\P_{(\zg_2,f_{\zg_2})}[m]$ or from $\P_{(\zg_2,f_{\zg_2})}$ to $\P_{(\zg_1,f_{\zg_1})}[m]$ arising from the intersection $q^\prime_1=q^\prime_2$. A contradiction since both $(\zg_1,f_{\zg_1})$ and $(\zg_2,f_{\zg_2})$ belong to the silting dissection $(\zD,f)$. Thus we assume that $\zb\neq \zg_+$ and we consider the following two subcases.

Subcase 1.1. Suppose that the map $d$ arises from the intersection at $q^\prime_1$. Then at $q^\prime_1$ the arcs  $\zg_1$, $\zg_+$, $\zb$ are ordered in the anticlockwise order as $\zg_1$, $\zg_+$, $\zb$.
On the other hand, note that there is a non-zero map $b_1$ from $\P_{(\zg_1,f_{\zg_1})}$ to $\P_{(\zg_+,f_{\zg_+})}$ arising from the intersection $q^\prime_1$. So we have $b_1\circ d$ from $\P_{(\zg_1,f_{\zg_1})}$ to $\P_{(\zb,f_\zb)}[m]$, which is a non-zero map arising from $q^\prime_1$. But then we have a contradiction, noticing that both graded arcs $(\zg_1,f_{\zg_1})$ and $(\zb,f_{\zb})$ belong to the silting dissection $(\zD,f)$.

Subcase 1.2. If the map $d$ arises from $q^\prime_2$, then the proof is similar to the first subcase.

To sum up, for both subcases, we have proved $\Hom_{\cald^b(A)}(\P_{(\zg_+,f_{\zg_+})},\P_{(\zb,f_\zb)}[m])=0$, for any $(\zb,f_\zb)$ in $(\zD,f)$.

Case 2. For  $\zg$ as in Figure \ref{Figure:Possible positions of a pre-silting arc} (II), suppose that there exists a graded arc $(\zb,f_\zb)$ in $\mu^+_{(\zg,f_\zg)}((\Delta,f))$ and a non-zero map $d$ from $\P_{(\zg_+,f_{\zg_+})}$ to $\P_{(\zb,f_\zb)}[m]$. Similar to the first case, $\zb$ is different from $\zg_+$, and we have again two subcases.

Subcase 2.1. If the map $d$ arises from $q^\prime_1$, then the proof is exactly the same as the proof of subcase 1.1.

Subcase 2.2. If the map $d$ arises from an intersection at $q_2$, then at $q_2$ we have in anticlockwise order $\zg_+$, $\zg$, and then  $\zb$. On the other hand, note that there is a non-zero map $c_1$ from $\P_{(\zg_+,f_{\zg_+})}$ to $\P_{(\zg,f_{\zg})}[1]$ arising from the intersection $q_2$.
So we have a map $e$ from $\P_{(\zg,f_{\zg})}$ to $\P_{(\zb,f_{\zb})}[m-1]$ such that $d=c_1\circ e[1]$. If $m=1$, then the existence of $e$ implies $f_{\zb}(s_{q_{2}}^{\zb})$ should equal to $f_{\zg}(s_{q_{2}}^{\zg})$. And since $\zb\neq\zg_+$, so $\zb$ should be drawn in the picture (see the beginning of this subsection for the rules that what kind of arcs should be drawn in the picture). A contradiction to the assumption of $\zg$ in Figure \ref{Figure:Possible positions of a pre-silting arc} (II).
So $m \geq 2$. But then $e$ is a non-zero map from $\P_{(\zg,f_{\zg})}$ to $\P_{(\zb,f_{\zb})}[m-1]$ with $m-1 > 0$, which contradicts to the fact that $(\zg,f_\zg)$ and $(\zb,f_\zb)$ both belong to the silting dissection $(\zD,f)$.

To sum up, we have proved  $\Hom_{\cald^b(A)}(\P_{(\zg_+,f_{\zg_+})},\P_{(\zb,f_\zb)}[m])=0$, for any $(\zb,f_\zb)$ in $(\zD,f)$ for the second case.

Case 3. For the $\zg$ in Figure \ref{Figure:Possible positions of a pre-silting arc} (III), note that $\mu^+_{(\zg,f_\zg)}((\Delta,f))=(\Delta,f)\setminus \{(\zg,f_\zg)\}\sqcup \{(\zg,f_\zg[1])\}$, so $\Hom_{\cald^b(A)}(\P_{(\zg_+,f_{\zg_+})},\P_{(\zb,f_\zb)}[m])=0$ for any $(\zb,f_\zb)\in(\zD,f)$ and $m\geq 2$. By the assumption of $\zg$ in the Figure \ref{Figure:Possible positions of a pre-silting arc} (III), there also exists no non-zero map from $\P_{(\zg,f_{\zg})}$ to $\P_{(\zb,f_{\zb})}$ for any $(\zb,f_\zb)$ in $(\zD,f)$, and thus no non-zero map from $\P_{(\zg_+,f_{\zg_+})}$ ($=\P_{(\zg,f_{\zg})}[1]$) to $\P_{(\zb,f_{\zb})}[1]$.

So $\Hom_{\cald^b(A)}(\P_{(\zg_+,f_{\zg_+})},\P_{(\zb,f_\zb)}[m])=0$ and thus $\Hom_{\cald^b(A)}(\P_{(\za,f_\za)},\P_{(\zb,f_\zb)}[m])=0$ for any $(\za,f_\za)$ and $(\zb,f_\zb)$ in $(\zD,f)$.
\end{proof}

\subsection{The compatibility of mutations}\label{The compatibility of mutations}
Now we have two kinds of mutations, the mutation of silting objects in the derived category $K^b(\proj A)$ and the mutation of silting dissections in the corresponding geometric model $(S,M,\Delta_A)$. The two mutations fit into the following diagram,
\begin{equation}\label{equation:compatibilitymutation}
\xymatrix{(\Delta,f)\ar[dd]_{}\ar[rrr]^{\mu^+_{(\zg,f_\zg)}}&&&
\mu^+_{(\zg,f_\zg)}(\Delta,f)\ar[dd]_{}~\\
&&&\\
\P_{(\Delta,f)}\ar[uu]_{}\ar[rrr]^{\mu^+_{\P_{(\zg,f_\zg)}}}&&&
{\P_{\mu^+_{(\zg,f_\zg)}(\Delta,f)}}\ar[uu]_{}}
\end{equation}
where $(\zD,f)$ is a silting dissection containing a graded arc $(\zg,f_{\zg})$, $\mu^+_{(\zg,f_\zg)}$ is the left mutation at $(\zg, f_\zg)$ of the silting dissection defined in Definition \ref{definition:mutation of silting dissections}, $\mu^+_{\P_{(\zg,f_\zg)}}$ is the left silting mutation of the silting object defined in Definition \ref{Definition:mutation of silting objects}, and the vertical correspondence is established in \cite{APS19,O19} with $\P_{(\zg,f_\zg)}$ the pre-silting object associated to $(\zg,f_\zg)$.

We have a  similar diagram with respect to  the right mutation of silting dissections and the right mutation of the corresponding silting objects in the derived category.

The following theorem shows that these two  mutations are compatible with each other. We only state the compatibility of the left mutations. The statement of the compatibility of the right mutations then is the dual statement of  Theorem~\ref{theorem: compatibilitymutations}.

\begin{theorem}[compatibility of mutations]\label{theorem: compatibilitymutations}
 Let $(\zD,f)$ be a silting dissection, and let $(\zg,f_{\zg})$ be a graded arc in it. Denote by $\P_{(\zD,f)}=\bigoplus_{j=1}^n \P_{(\gamma_j, f_{j})}$ the silting object corresponding to $(\zD,f)$. Then
the left mutation of $(\zD,f)$ is compatible with the left mutation of $\P_{(\zD,f)}$, that is, the diagram \eqref{equation:compatibilitymutation} is commutative. Moreover, the exchange triangle of the left silting mutation is given by

(1) the distinguished triangle arising from the homotopy push-out of the maps $a_1$ and $a_2$ given in Proposition \ref{proposition:flip and push-out}(3) for case (I):
\begin{equation*}
\P_{(\zg,f_\zg)}\s{(a_1,a_2)}\longrightarrow \P_{(\zg_1,f_{\zg_1})}\oplus \P_{(\zg_2,f_{\zg_2})}\s{\left(
                       \begin{smallmatrix}
                         b_1 \\
                         -b_2\\
                       \end{smallmatrix}
                     \right)}\longrightarrow \P_{(\zg_+,f_{\zg_+})}\s{c\circ d}\longrightarrow {\P_{(\zg,f_\zg)}[1]};
                     \end{equation*}

(2) the distinguished triangle arising from the map $a_1$ for case (II):
\begin{equation*}
\P_{(\zg,f_\zg)}\s{a_1}\longrightarrow \P_{(\zg_1,f_{\zg_1})}\s{b_1}\longrightarrow \P_{(\zg_+,f_{\zg_+})}\s{c_1}\longrightarrow {\P_{(\zg,f_\zg)}[1]};
\end{equation*}

(3) the trivial distinguished triangle starting from $\P_{(\zg,f_\zg)}$ for case  (III) :
\begin{equation*}
\P_{(\zg,f_\zg)}\s{0}\longrightarrow 0\s{0}\longrightarrow {\P_{(\zg,f_\zg)}[1]}\s{id}\longrightarrow {\P_{(\zg,f_\zg)}[1]}.
\end{equation*}
\end{theorem}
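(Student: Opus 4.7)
\medskip

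\noindent\textbf{Proof proposal.} My plan is to reduce the theorem, in each of the three cases, to checking two properties of the candidate triangle stated in the theorem: (a) its leftmost map is a minimal left $\add(M')$-approximation of $\P_{(\gamma,f_\gamma)}$, where $M' = \P_{(\Delta,f)}/\P_{(\gamma,f_\gamma)}$, and (b) its cone is $\P_{(\gamma_+,f_{\gamma_+})}$. By Proposition-Definition~\ref{Definition:mutation of silting objects}, properties (a) and (b) together identify the candidate triangle with the exchange triangle of $\mu^+_{\P_{(\gamma,f_\gamma)}}(\P_{(\Delta,f)})$, and consequently give the commutativity of diagram~\eqref{equation:compatibilitymutation}. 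The distinguished triangle structure itself is furnished by Proposition~\ref{proposition:flip and push-out} in case (I), by Lemma~\ref{lemma:triangles as triangles} in case (II), and is trivial in case (III); and property (b) is then immediate from these same results together with Theorem~\ref{theorem:object and map in derived category2}(1) for case (III), since $\P_{(\gamma,f_\gamma)}[1]=\P_{(\gamma,f_\gamma[1])}=\P_{(\gamma_+,f_{\gamma_+})}$.

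Hence the content of the proof is the approximation property (a). The first step is the following observation: because $\gamma$ lies in an admissible dissection, $\gamma$ has no interior crossings with any other arc $\delta\in\Delta\setminus\{\gamma\}$, so by Theorem~\ref{theorem:object and map in derived category2}(2) every non-zero morphism $\P_{(\gamma,f_\gamma)}\to\P_{(\delta,f_\delta)}$ is a linear combination of morphisms coming from oriented intersections at the common endpoints of $\gamma$ and $\delta$. I will treat the endpoints $q_1$ and $q_2$ separately, at each one looking at the anticlockwise ordering of arcs of $\Delta$ incident to the endpoint, starting from $\gamma$. For case (III) I claim that no arc $\delta\in\Delta\setminus\{\gamma\}$ at $q_i$ satisfies $f_\delta(s^\delta_{q_i})=f_\gamma(s^\gamma_{q_i})$; indeed, if $\gamma_i$ is the first anticlockwise arc and by hypothesis $f_{\gamma_i}(s^{\gamma_i}_{q_i})-f_\gamma(s^\gamma_{q_i})=k\neq 0$, then any such $\delta$ further anticlockwise with matching grading would force, at the same point $q_i$, a non-zero morphism $\P_{(\gamma,f_\gamma)}\to\P_{(\gamma_i,f_{\gamma_i})}[k]$ or $\P_{(\gamma_i,f_{\gamma_i})}\to\P_{(\delta,f_\delta)}[-k]$, one of which has positive shift, contradicting the silting condition on $(\Delta,f)$. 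Hence in case (III) we have $\Hom(\P_{(\gamma,f_\gamma)},M')=0$, so the minimal left $\add(M')$-approximation is the zero map to $0$, as required.

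For cases (I) and (II) the key ingredient is the factorization lemma: at a common $\gpoint$-endpoint $q_i$, any morphism $\P_{(\gamma,f_\gamma)}\to\P_{(\delta,f_\delta)}$ arising from an intersection at $q_i$ with $\delta$ further anticlockwise than $\gamma_i$ factors as the composition $\P_{(\gamma,f_\gamma)}\xrightarrow{a_i}\P_{(\gamma_i,f_{\gamma_i})}\to\P_{(\delta,f_\delta)}$, where the second factor is the morphism associated to the intersection of $\gamma_i$ and $\delta$ at $q_i$ (which lies in $\add(M')$, so is irrelevant to approximation but justifies the factorization). Granted this, any morphism $\P_{(\gamma,f_\gamma)}\to N\in\add(M')$ is a sum of morphisms through the $a_i$, proving the approximation property; minimality is then immediate since each $a_i$ is non-zero into an indecomposable summand and the $\P_{(\gamma_i,f_{\gamma_i})}$ are pairwise distinct summands of $M'$. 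For case (I) this then yields the exchange triangle via the homotopy push-out in Proposition~\ref{proposition:flip and push-out}(3)--(4), and for case (II) via Lemma~\ref{lemma:triangles as triangles}. The main obstacle I anticipate is a clean proof of the factorization lemma; I will establish it by combining the basis description of morphisms via graded oriented intersections (Remark~\ref{remark:object and map in derived category2}) with the fact that on string objects, composition of morphisms induced by concatenation of graded arcs at a shared $\gpoint$-point corresponds to concatenation on the surface, as recorded in \cite{ALP16,OPS18}; this reduces the factorization to a purely combinatorial identification of graded intersection morphisms at $q_i$.
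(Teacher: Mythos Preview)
Your proposal is correct and follows essentially the same route as the paper: in each case you verify that the displayed triangle is distinguished (via Proposition~\ref{proposition:flip and push-out}, Lemma~\ref{lemma:triangles as triangles}, or trivially), that its first map is a left $\add(M')$-approximation (using that all morphisms arise from endpoint intersections and factor through the immediate anticlockwise neighbour $a_i$, or vanish when the gradings mismatch by the silting inequalities), and that its cone is $\P_{(\gamma_+,f_{\gamma_+})}$. The paper treats the factorization through $a_i$ as immediate rather than as a separate ``factorization lemma'', and handles case~(II) exactly as you suggest, by combining the case~(I) factorization at $q_1$ with the case~(III) vanishing argument at $q_2$; your concern about justifying the factorization is somewhat overstated, since it follows directly from the explicit basis description of morphisms at a common $\gpoint$-point.
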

\begin{proof}
We begin by proving part (1). From Proposition \ref{proposition:flip and push-out} (3), we have the distinguished triangles in the  statement. Since in an admissible $\gpoint$-dissection the only intersections of the arcs are at the endpoints, the maps between indecomposable objects in $\P_{(\zD,f)}$ that arise from these endpoint intersections form a  basis of the corresponding morphism spaces.
Thus for any graded arc $(\zb,f_\zb)\in (\zD,f)$ with a non-zero morphism space $\Hom_{\cald^b(A)}(\P_{(\zg,f_\zg)},\P_{(\zb,f_\zb)})$,  $\zb$ has endpoints  $q_1$ or $q_2$ in Figure~\ref{Figure:Possible positions of a pre-silting arc} with $f_\zb(s^{\zb}_{q_1})=f_\zg(s^{\zg}_{q_1})$ or $f_\zb(s^{\zb}_{q_2})=f_\zg({s^{\zg}_{q_2}})$ respectively. Moreover, when $\zb\neq \zg$,
since $\zg_i$ is the first $\gpoint$-arc in $\zD$ anti-clockwise following $\zg$ based at $q_i$ with $f_{\zg_i}(s^{\zg_i}_{q_i})=f_\zg(s^{\zg}_{q_i})$ for $i=1, 2$, each basis map in
$\Hom_{\cald^b(A)}(\P_{(\zg,f_\zg)},\P_{(\zb,f_\zb)})$ (which arises from $q_i$) factors through $a_i$, and thus any map in
$\Hom_{\cald^b(A)}(\P_{(\zg,f_\zg)},\P_{(\zb,f_\zb)})$ factors through $(a_1,a_2)$.
On the other hand, since $\zg_i\neq \zg$, we have $\P_{(\zg_1,f_{\zg_1})}\oplus\P_{(\zg_2,f_{\zg_2})}\in add(\P_{(\zD,f)}\setminus \P_\zg)$.
Therefore, $(a_1,a_2)$ is the left $add(\P_{(\zD,f)}\setminus \P_\zg)$-approximation of $\P_{(\zg,f_\zg)}$, and the corresponding triangle  is the left exchange triangle of the silting mutation of $\P_{(\zD,f)}$ at $\P_{(\zg,f_\zg)}$.

Part (2) follows from a similar argument as in part (1) combined with the argument in the proof of part (3) below.  In this case, $a_1$ is the left $add(\P_{(\zD,f)}\setminus \P_\zg)$-approximation of $\P_{(\zg,f_\zg)}$, and the triangle in part (2) is the exchange triangle of the left silting mutation.

To prove part (3), we show that the morphism space  $\Hom_{\cald^b(A)}(\P_{(\zg,f_\zg)},\P_{(\zb,f_\zb)})$ is zero for any graded arc $(\zb,f_\zb) \in (\zD,f)$ different from $(\zg,f_\zg)$, and thus the triangle is the exchange triangle of the left silting mutation.

For contradiction, assume that $\Hom_{\cald^b(A)}(\P_{(\zg,f_\zg)},\P_{(\zb,f_\zb)})\neq 0$
for some graded arc $(\zb,f_\zb)\in (\zD,f)$, then by the same argument as in part (1),  $\zb$ has endpoints $q_1$ or $q_2$. Without loss of generality, assume that $\zb$ and $\zg$ intersect at $q_1$ and that this intersection gives rise to a non-zero map in  $\Hom_{\cald^b(A)}(\P_{(\zg,f_\zg)},\P_{(\zb,f_\zb)})$. Then $f_{\zg}(s_{q_1}^{\zg})= f_{\zb}(s_{q_1}^{\zb})$. Note that the existence of $\zb$ implies the existence of $\zg_1$ (see how to define $\zg_1$ at the beginning of subsection \ref{Mutation of silting dissections}). Furthermore at $q_1$ we have in  anti-clockwise order $\zg$ followed by $\zg_1$ and then $\zb$. Since $(\zD,f)$ is a silting dissection, we have $f_\zg(s_{q_1}^{\zg})\geq f_{\zg_1}(s_{q_1}^{\zg_1})$ and
$f_{\zg_1}(s_{q_1}^{\zg_1})\geq f_{\zb}(s_{q_1}^{\zb})$.
On the other hand, by assumption the grading of $\gamma_1$ is such that there is no map from $\P_{(\zg,f_\zg)}$ to $\P_{(\zg_1,f_{\zg_1})}$, that is $f_\zg(s_{q_1}^{\zg})\neq f_{\zg_1}(s_{q_1}^{\zg_1})$. Therefore $f_\zg(s_{q_1}^{\zg}) > f_{\zb}(s_{q_1}^{\zb})$. A contradiction.
\end{proof}

The following corollary can directly be  derived from the above theorem.
\begin{corollary}
 The exchange triangle of a left or a right silting mutation has at most two middle terms.
\end{corollary}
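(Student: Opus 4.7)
The plan is to obtain the corollary as an immediate consequence of Theorem \ref{theorem: compatibilitymutations}, which already displays the exchange triangle of every left silting mutation in one of three explicit forms. It suffices to treat the left mutation, since the right mutation is dual (one replaces Figure \ref{Figure:Possible positions of a pre-silting arc} by Figure \ref{Figure:Possible positions of a pre-silting arc-left} throughout and reverses the roles of $\mu^{+}$ and $\mu^{-}$). Given a silting dissection $(\zD,f)$ and a graded arc $(\zg,f_\zg)\in(\zD,f)$, the local configuration of $\zg$ in $(\zD,f)$ falls into exactly one of the three cases (I), (II), (III) of Figure \ref{Figure:Possible positions of a pre-silting arc}, according to how many of the two candidate neighbouring arcs $\zg_1,\zg_2$ exist and carry gradings compatible with $f_\zg$ in the sense specified at the beginning of Subsection \ref{Mutation of silting dissections}.

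The key step is then simply to read off the middle term of the exchange triangle in each of the three cases listed in Theorem \ref{theorem: compatibilitymutations}. In case (I) the middle term is $\P_{(\zg_1,f_{\zg_1})}\oplus\P_{(\zg_2,f_{\zg_2})}$, a direct sum of two indecomposables; in case (II) it is the single indecomposable $\P_{(\zg_1,f_{\zg_1})}$; and in case (III) it is zero. In every case the middle term decomposes into at most two indecomposable summands, which is exactly the assertion of the corollary. There is no real obstacle, since the case distinction and the identification of the middle terms were already carried out in the proof of the preceding theorem; the corollary merely records the uniform bound that emerges from that case analysis.
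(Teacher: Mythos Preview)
Your proof is correct and follows exactly the approach the paper intends: the paper states that the corollary ``can directly be derived from the above theorem'' without further elaboration, and your argument simply unpacks this by reading off the middle terms in each of the three cases of Theorem~\ref{theorem: compatibilitymutations}.
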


In general, a silting mutation of a tilting object is not necessarily another tilting object and it is difficult to give a characterisation of when it is and when it is not. However, in the case of gentle algebras, using the surface model, we can  characterise precisely when a silting mutation of a tilting object is again a tilting object.  We only state the result for the left silting mutation case, the case for right silting mutation is dual. In the below we call a silting dissection $(\zD, f)$ such that the corresponding object in $K^b(\proj A)$ is tilting, a \emph{tilting dissection}.

\begin{proposition}\label{proposition:tilting}
Let $(\zD, f)$ be a tilting dissection and let $(\zg, f_\zg) \in (\zD, f)$.
Then  the left silting mutation $\mu^+_{(\zg,f_\zg)}(\Delta,f)$ is a tilting dissection if and only if
one of the following two conditions holds
\begin{enumerate}
\item the graded arc $(\zg, f_\zg)$ is as in Case I of Figure \ref{Figure:Possible positions of a pre-silting arc};
\item the graded arc $(\zg, f_\zg)$ is as in Case II of Figure \ref{Figure:Possible positions of a pre-silting arc} and no other arc in $\zD$ starts or ends at $q_2$.
\end{enumerate}

If $\zg$ is as in Case III of Figure \ref{Figure:Possible positions of a pre-silting arc} then $\mu^+_{(\zg,f_\zg)}(\Delta,f)$ is a silting dissection which is not tilting.
\end{proposition}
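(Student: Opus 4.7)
The plan is to reformulate tilting in geometric terms and then analyse each case of Figure \ref{Figure:Possible positions of a pre-silting arc}. By Theorem \ref{theorem:object and map in derived category2} together with the fact that an admissible $\gpoint$-dissection has no interior intersections, every $\Hom$-space between indecomposable summands of $\P_{(\Delta,f)}$ is detected by endpoint intersections, and the degree of the morphism attached to such an intersection at a $\gpoint$-marked point $q$ equals the difference of the gradings at $q$. Hence $(\Delta,f)$ is a tilting dissection if and only if at every $\gpoint$-marked point $q$ the value $f_\beta(s_q^\beta)$ is the same for every arc $\beta\in\Delta$ ending at $q$; this is the criterion I would check after each mutation.

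For Case I, Proposition \ref{proposition:flip and push-out}(2) directly gives $f_{\gamma_+}(s_{q'_i}^{\gamma_+})=f_{\gamma_i}(s_{q'_i}^{\gamma_i})$ for $i=1,2$. Since $(\Delta,f)$ is tilting, every arc of $\Delta$ ending at $q'_i$ already matches $\gamma_i$ at $q'_i$, so $\gamma_+$ matches them too; at $q_1,q_2$ only $\gamma$ is removed and the remaining gradings are unchanged. Hence the mutation is tilting.

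For Case II, the tilting hypothesis forces extra structure: if $\gamma_2$ existed then tilting of $\Delta$ would yield $f_{\gamma_2}(s_{q_2}^{\gamma_2})=f_\gamma(s_{q_2}^\gamma)$, placing us in Case I. So $\gamma$ must be the last arc anti-clockwise at $q_2$, although arcs lying clockwise of $\gamma$ at $q_2$ may still be present. The key computation is to apply Lemma \ref{lemma:equalities} to the distinguished triangle on the surface $\triangle(\gamma,\gamma_1,\gamma_+)$, labelled with $\alpha=\gamma$ and $\beta=\gamma_1$ (so that $\gamma_+$ plays the role of the Lemma's $\gamma$ and the shared endpoints in Proposition-Definition \ref{prop-def-distinguished triangle on the surface} become $q_2^{\textrm{PD}}=q'_1$, $q_3^{\textrm{PD}}=q_2$). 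Substituting the matching conditions $f_\gamma(s_{q_1}^\gamma)=f_{\gamma_1}(s_{q_1}^{\gamma_1})$ (from Case II) and $f_{\gamma_+}(s_{q'_1}^{\gamma_+})=f_{\gamma_1}(s_{q'_1}^{\gamma_1})$ (from Definition \ref{definition:mutation of silting dissections}), the equation of Lemma \ref{lemma:equalities} collapses to
\[
f_{\gamma_+}(s_{q_2}^{\gamma_+})=f_\gamma(s_{q_2}^\gamma)-1.
\]
Matching at $q'_1$ works as in Case I. At $q_2$, if no other arc ends there, the matching criterion is vacuous and the mutation is tilting. Otherwise any arc $\delta\in\Delta$ ending at $q_2$ has grading $f_\gamma(s_{q_2}^\gamma)$ by tilting of $\Delta$, which differs from $f_{\gamma_+}$'s new value by $1$; the endpoint intersection then produces a nonzero morphism $\P_\delta\to\P_{\gamma_+}[\pm 1]$ and breaks tilting.

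For Case III, the mutation replaces $(\gamma,f_\gamma)$ by $(\gamma,f_\gamma[1])$, so the new grading value at each endpoint $q_i$ is $f_\gamma(s_{q_i}^\gamma)-1$. Any other arc $\delta$ ending at $q_1$ or $q_2$ must, by tilting of $\Delta$, have grading $f_\gamma(s_{q_i}^\gamma)$, which now fails to match $\gamma_+$ by exactly $1$; the corresponding endpoint intersection yields a nonzero morphism in a nonzero degree, so the mutation cannot be tilting. The main obstacle is the careful application of Lemma \ref{lemma:equalities} in Case II with the correct labelling $(\alpha,\beta)=(\gamma,\gamma_1)$, because only this choice is compatible with the grading convention of Definition \ref{definition:mutation of silting dissections} and produces the shift by $-1$ at $q_2$ that simultaneously powers the positive result (when $q_2$ is isolated) and the obstruction (when it is not).
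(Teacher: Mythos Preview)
Your proof is correct and takes essentially the same approach as the paper's: both begin with the endpoint-grading criterion for tilting and then check Cases I--III of Figure~\ref{Figure:Possible positions of a pre-silting arc}. In Case II you are in fact more explicit than the paper --- you derive the key identity $f_{\gamma_+}(s_{q_2}^{\gamma_+})=f_\gamma(s_{q_2}^{\gamma})-1$ from Lemma~\ref{lemma:equalities}, whereas the paper simply states it --- and your observation that the tilting hypothesis forces $\gamma_2$ not to exist is a useful clarification the paper leaves implicit.
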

\begin{proof}
First note that a silting dissection $(\zD,f)$ is a tilting dissection if and only if the gradings of any two graded arcs $(\za,f_\za)$ and $(\zb,f_\zb)$ in $(\zD,f)$ are compatible at any common endpoint $q$, that is if $f_{\za}(s_{q}^{\za})=f_{\zb}(s_{q}^{\zb})$.

If $\zg$ is as in Figure \ref{Figure:Possible positions of a pre-silting arc} (I), then by the proof of Theorem \ref{theorem: compatibilitymutations}, we have the equalities $f_{\zg_+}(s_{q'_1}^{\zg_+})=f_{\zg_1}(s_{q'_1}^{\zg_1})$ and $f_{\zg_+}(s_{q'_2}^{\zg_+})=f_{\zg_2}(s_{q'_2}^{\zg_2})$. On the other hand, the gradings of the remaining arcs in $\zD\setminus \{\zg\}$ are not changed. So in the silting dissection $\mu^+_{(\zg,f_\zg)}(\Delta,f)$, the gradings of any two arcs are compatible at the common endpoints. Therefore it is a tilting dissection.

Let $\zg$ be an arc as in Figure \ref{Figure:Possible positions of a pre-silting arc} (II). If, other than $\zg$,  there exists no  arc in $\zD$ starting or ending at  $q_2$  then $\zg_+$ is the only arc in $\mu^+_{(\zg,f_\zg)}(\Delta,f)$ starting or ending at  $q_2$. On the other hand, we have the equality $f_{\zg_+}(s_{q'_1}^{\zg_+})=f_{\zg_1}(s_{q'_1}^{\zg_1})$. Thus the gradings of the arcs are compatible in $\mu^+_{(\zg,f_\zg)}(\Delta,f)$. So  it is a tilting dissection.  If on the other hand, there exists another arc $\zb$ in $\zD$ starting or ending at $q_2$ then we have $f_{\zg_+}(s_{q_2}^{\zg_+})=f_{\zg}(s_{q_2}^{\zg})-1
=f_{\zb}(s_{q_2}^{\zb})-1$ in $\mu^+_{(\zg,f_\zg)}(\Delta,f)$. That is, the gradings of $\zg_+$ and $\zb$ in $\mu^+_{(\zg,f_\zg)}(\Delta,f)$ are not compatible at $q_2$. So the new  silting dissection is not a tilting dissection.

If $\zg$ is as in Figure \ref{Figure:Possible positions of a pre-silting arc} (III), then $(\zg,f_\zg[1])$ belongs to the new dissection $\mu^+_{(\zg,f_\zg)}(\Delta,f)$ and it is clear that  this is not a tilting dissection.
\end{proof}

We now translate the above geometric statement into a purely algebraic characterisation of when the left silting mutation of a tilting object is again  tilting. The case for the right silting mutation of a tilting object can be dually stated.

\begin{corollary}\label{corollary:tilting}
Let $A$ be a gentle algebra. Let
$$\P_{(\zD,f)}=\bigoplus_{(\zg, f_\zg) \in (\zD, f)} \P_{(\zg, f_{\zg})}$$
be a basic tilting object in $K^b(\proj A)$.
Then the left silting mutation  $$\P_{\mu^+_{(\zg,f_\zg)}(\Delta,f)}= \bigoplus_{(\zg', f_{\zg'}) \in (\zD, f), \zg' \neq \zg} \P_{(\zg', f_{\zg'})} \oplus \P_{(\zg_+, f_{\zg_+})}  $$ of  $\P_{(\zD,f)}$ at $\P_{(\zg,f_\zg)}$ is a tilting object if and only if one of the following two conditions  hold
\begin{enumerate}
\item the exchange triangle giving rise to $\P_{\mu^+_{(\zg,f_\zg)}(\Delta,f)}$ has two middle terms. That is, it is of the form
\begin{equation*}
\P_{(\zg,f_\zg)}\longrightarrow \P_{(\zg_1,f_{\zg_1})}\oplus \P_{(\zg_2,f_{\zg_2})}\longrightarrow \P_{(\zg_+,f_{\zg_+})}\longrightarrow {\P_{(\zg,f_\zg)}[1]};
                     \end{equation*}
 for  $\P_{(\zg_1, f_{\zg_1})}$ and $\P_{(\zg_2, f_{\zg_2})}$ indecomposable summands of $\P_{(\zD,f)}$.
\item the exchange triangle giving rise to $\P_{\mu^+_{(\zg,f_\zg)}(\Delta,f)}$ has one middle term, that is,  it is of the form
\begin{equation*}
\P_{(\zg,f_\zg)}\s{a}\longrightarrow \P_{(\zg_1,f_{\zg_1})}\longrightarrow \P_{(\zg_+,f_{\zg_+})}\longrightarrow {\P_{(\zg,f_\zg)}[1]},
\end{equation*}  for some indecomposable summand $ \P_{(\zg_1, f_{\zg_1})}$ of $\P_{(\zD,f)}$ and there exists no non-zero morphism from any indecomposable summand in $\P_{(\zD,f)}\setminus \P_{(\zg,f_\zg)}$ to $\P_{(\zg,f_\zg)}$, or if such map $b$ exists, then $ba\neq 0$.
\end{enumerate}

If the exchange triangle giving rise to $\P_{\mu^+_{(\zg,f_\zg)}(\Delta,f)}$ has zero middle term, that is, if it is of the form
\begin{equation*}
\P_{(\zg,f_\zg)}\s{0}\longrightarrow 0\s{0}\longrightarrow {\P_{(\zg,f_\zg)}[1]}\s{id}\longrightarrow {\P_{(\zg,f_\zg)}[1]},
\end{equation*}
then $\P_{\mu^+_{(\zg,f_\zg)}(\Delta,f)}$ is not a tilting object.
\end{corollary}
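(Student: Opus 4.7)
The plan is to deduce the corollary from the geometric characterization in Proposition \ref{proposition:tilting}, using Theorem \ref{theorem: compatibilitymutations} as the translation bridge. First I would invoke Theorem \ref{theorem: compatibilitymutations} to identify the three local configurations of Figure \ref{Figure:Possible positions of a pre-silting arc} with the three possible numbers of middle terms in the left exchange triangle: Case I yields two middle terms, Case II yields one, and Case III yields zero. Combined with Proposition \ref{proposition:tilting}, this immediately gives condition (1) and the final non-tilting assertion for the zero-middle-term case. The content of the proof is then to show that, in the one-middle-term setting corresponding to Case II, the geometric condition ``no other arc in $\zD$ starts or ends at $q_2$'' is equivalent to the algebraic condition stated in part (2).

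For this equivalence I would first show that in any tilting dissection $(\zD,f)$, two graded arcs meeting at a common boundary endpoint must have equal grading values at that endpoint; otherwise the boundary intersection would contribute a non-zero element to $\Hom_{\cald^b(A)}(\P_{(\zD,f)},\P_{(\zD,f)}[n])$ for some $n\neq 0$, violating the tilting hypothesis. Since distinct arcs of a dissection meet only at boundary points, every non-zero morphism $b\colon \P_{(\beta,f_\beta)}\to \P_{(\zg,f_\zg)}$ with $\beta\in \zD\setminus\{\zg\}$ arises from a boundary intersection at either $q_1$ or $q_2$. The key claim is then that such a $b$ arises at $q_1$ if and only if $ba\neq 0$, where $a=a_1$ is the first morphism of the one-middle-term exchange triangle, which itself arises at $q_1$.

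To justify the key claim I would use the combinatorial description of morphisms and their compositions via string combinatorics in \cite{ALP16} (equivalently, the surface-theoretic interpretation of \cite{OPS18}): two boundary morphisms of graph-map type compose non-trivially precisely when they share a common endpoint where their local orientations are consistent, and compose to zero otherwise. If $b$ arises at $q_1$, then the tilting compatibility of gradings forces $\beta,\zg,\zg_1$ to sit around $q_1$ in the anti-clockwise order $\ldots,\beta,\ldots,\zg,\zg_1,\ldots$ with matching grading values, so that $b$ and $a$ concatenate at $q_1$ to give the non-zero basis morphism $ba\colon \P_{(\beta,f_\beta)}\to \P_{(\zg_1,f_{\zg_1})}$ associated with the anti-clockwise passage from $\beta$ to $\zg_1$. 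If instead $b$ arises at $q_2$, the relevant string identifications for $b$ and $a$ occur at disjoint boundary points and the composition vanishes. Consequently, ``every such $b$ satisfies $ba\neq 0$'' is equivalent to ``no such $b$ arises at $q_2$'', which is precisely the geometric condition of Proposition \ref{proposition:tilting}(2).

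The main obstacle is carrying out the composition dichotomy of the previous paragraph rigorously, as this requires explicitly handling potentially degenerate configurations (loops at $q_1$ or $q_2$, the case $q_1=q_2$, and the possibility $\beta=\zg_1$) and invoking the correct basis-level composition rules from \cite{ALP16}; beyond these routine but delicate verifications the proof is a direct translation of Proposition \ref{proposition:tilting} into algebraic language via Theorem \ref{theorem: compatibilitymutations}.
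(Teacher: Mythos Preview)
Your approach is essentially the same as the paper's: both derive the corollary from Proposition~\ref{proposition:tilting} via Theorem~\ref{theorem: compatibilitymutations}, with the substantive step being the equivalence between the geometric condition ``no other arc of $\zD$ ends at $q_2$'' and the algebraic condition in part (2). The paper's proof is a two-sentence remark that leaves this equivalence implicit, whereas you spell out the dichotomy on where $b$ arises; your justification that $ba=0$ when $b$ comes from $q_2$ is correct but is more cleanly seen by factoring $b$ through $c_1[-1]\colon \P_{(\zg_+,f_{\zg_+})}[-1]\to \P_{(\zg,f_\zg)}$ (using that at $q_2$ the anti-clockwise order is $\zb,\zg_+,\zg$ and the gradings match) rather than by the disjoint-overlap heuristic, which can be delicate for short strings.
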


\begin{proof}
The corollary is just a restatement of  Proposition~\ref{proposition:tilting} using Theorem~\ref{theorem: compatibilitymutations}. Note that in the second case, the condition in the corollary coincides with the condition that there exists no arc in $\zD$ with $q_2$ as an endpoint except for $\zg$ and that this is independent of whether  $\zg$ is a loop or not.
\end{proof}

\section{Silting reduction}\label{section:Silting reductions}

In this section, we give a geometric interpretation of the silting reduction at a pre-silting subcategory arising from an indecomposable pre-silting object in the subcategory of perfect objects of the bounded derived category of a gentle algebra. We do this in terms of cutting the surface associated to the gentle algebra along the arc corresponding to this indecomposable pre-silting object.

For this we fix the following set-up. Let $A$ be a gentle algebra with associated   graded marked surface $(S,M,\zD_A)$. Denote by $\zD^*_A$ the dual admissible $\rpoint$-dissection of $\zD_A$, and by $\calk$ the homotopy category $K^b(\proj A)$. We assume throughout this section that $\zg$  is a $\gpoint$-arc without self-intersections, that is, $\zg$ does not have any self-intersections neither in the  interior of $S$ nor at its endpoints.  Then it follows from  the description of the maps in $\calk$, which we recall in subsection \ref{subsection:derived categories of gentle algebras}, that $\P_{(\zg,f_\zg)}$ is an indecomposable pre-silting object. We set $\calp=add(\P_{(\zg,f_\zg)})$ to be the pre-silting subcategory of $\calk$ generated by $\P_{(\zg,f_\zg)}$.

\subsection{Cutting the surface}\label{subsection:cutting the surface}

\begin{definition}[the cut marked surface]
\label{definition:cut surface}
Let $(S,M)$ be a marked surface. Let $\zg$ be an $\gpoint$-arc in $S$ without self-intersections.

(1) The \emph{cut surface} $S_{\zg}$  is obtained from $S$ by cutting along $\zg$ and contracting the new boundary segments along $\zg$.

(2) We define the \emph{cut marked surface}  $(S_\zg,M_\zg)$ as  follows: $P^{\rpoint}_\zg=P^{\rpoint}$,  $M_{\zg}^{{\rpoint}}
=M^{\rpoint}$, and $M_\zg^{{\gpoint}}$ is obtained from $M^{\gpoint}$ in the following way. Set
$M_\zg^{{\gpoint}}=M^{\gpoint}\setminus \{p,q\}\cup \{pq,p'q'\}$, where $p=p', q=q'$, and $pq$ and $p'q'$  are new $\gpoint$-marked points obtained by cutting along $\zg$ and then contracting, as described in Figures \ref{figure:cut 1} and
\ref{figure:cut 2}. Figure \ref{figure:cut 1}
shows the case when the endpoints of $\zg$ lie on
two different boundary components and Figure \ref{figure:cut 2} shows the case when the endpoints of $\zg$ lie on the same boundary component.
\end{definition}

\begin{figure}[H]
\begin{center}
\begin{tikzpicture}[scale=0.3]
\begin{scope}
	\draw (0,0) circle (2cm);
	\clip[draw] (0,0) circle (2cm);
	\foreach \x in {-2.5,-2,-1.5,-1,-0.5,0,0.5,1,1.5,2,2.5}	\draw[xshift=\x cm]  (-3,3)--(3,-3);
\end{scope}
\begin{scope}
    \draw (0,-8) circle (2cm);
    \clip[draw] (0,-8) circle (2cm);
	\foreach \x in {-2.5,-2,-1.5,-1,-0.5,0,0.5,1,1.5,2,2.5}
	\draw[xshift=\x cm]  (-3,-5)--(3,-11);
\end{scope}
\begin{scope}
    \draw (12,-4) circle (3cm);
    \clip[draw] (12,-4) circle (3cm);
	\foreach \x in {-4,-3.5,-3,-2.5,-2,-1.5,-1,-0.5,0,0.5,1,1.5,2,2.5,3,3.5}
	\draw[xshift=\x cm]  (16,-9)--(-2,16);
\end{scope}
    \draw (0,-2) -- (0,-6);
    \draw (0,-2) node {$\gpoint$};
    \draw (0,-6) node {$\gpoint$};
    \draw (-1,-4) node {$\zg$};
    \draw (-0.8,-2.7) node {$p$};
    \draw (0.8,-2.5) node {$p'$};
    \draw (-0.8,-5.5) node {$q$};
    \draw (0.8,-5.3) node {$q'$};
    \draw (0,3) node {$b_1$};
    \draw (0,-11) node {$b_2$};
    \draw (12,0) node {$b_1$};
    \draw (12,-8) node {$b_2$};
    \draw (9,-4) node {$\gpoint$};
    \draw (15,-4) node {$\gpoint$};
    \draw (7.5,-4) node {$pq$};
    \draw (16.5,-4) node {$p'q'$};
\end{tikzpicture}
\end{center}
\begin{center}
\caption{By cutting at  $\zg$ in $S$ as in the left picture, in the  cut marked surface the two boundary components in $S$ are replaced by  one boundary component with two new marked points.}\label{figure:cut 1}
\end{center}
\end{figure}
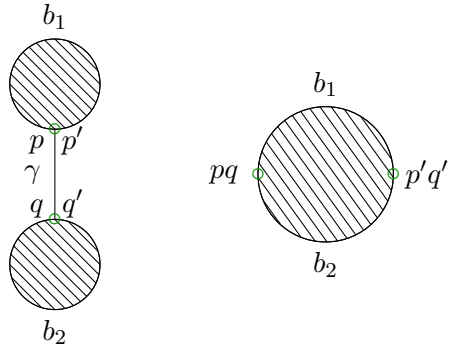

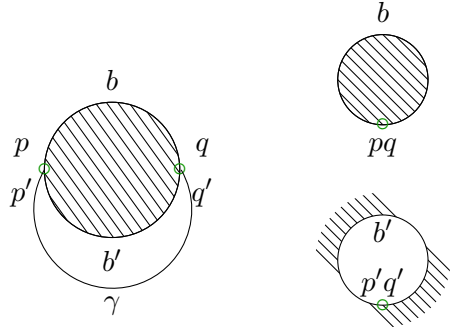
\begin{figure}[H]
\begin{center}
\begin{tikzpicture}[scale=0.3]
\begin{scope}
	\draw (0,0) circle (2cm);
	\clip[draw] (0,0) circle (2cm);
	\foreach \x in {-2.5,-2,-1.5,-1,-0.5,0,0.5,1,1.5,2,2.5}	\draw[xshift=\x cm]  (-3,3)--(3,-3);
\end{scope}
\begin{scope}[even odd rule]
    \draw (0,-8) circle (2cm);
    \clip[clip] (0,-8) circle (2cm) circle (3cm);
	\foreach \x in {-2.5,-2,-1.5,-1,-0.5,0,0.5,1,1.5,2,2.5}
	\draw[xshift=\x cm]  (-3,-5)--(3,-11);
\end{scope}
\begin{scope}
    \draw (-12,-4) circle (3cm);
    \clip[draw] (-12,-4) circle (3cm);
	\foreach \x in {-4,-3.5,-3,-2.5,-2,-1.5,-1,-0.5,0,0.5,1,1.5,2,2.5,3,3.5}
	\draw[xshift=\x cm]  (-8,-9)--(-26,16);
\end{scope}
    \draw (0,-2) node {$\gpoint$};
    \draw (0,-10) node {$\gpoint$};
    \draw (0,-3) node {$pq$};
    \draw (0,-6.7) node {$b'$};
    \draw (0,3) node {$b$};
    \draw (0,-9) node {$p'q'$};
    \draw (-12,0) node {$b$};
    \draw (-12,-8) node {$b'$};
    \draw (-15,-4) node {$\gpoint$};
    \draw (-9,-4) node {$\gpoint$};
    \draw (-16,-3) node {$p$};
    \draw (-16,-5) node {$p'$};
    \draw (-8,-3) node {$q$};
    \draw (-8,-5) node {$q'$};

\draw (-15,-4) arc (150:390:3.5);
    \draw (-12,-10) node {$\zg$};
\end{tikzpicture}
\end{center}
\begin{center}
\caption{If the arc  $\zg$ in $S$ is as in the picture on the left, in the cut marked surface, we obtain two boundary components, each with one new marked point.}\label{figure:cut 2}
\end{center}
\end{figure}

Then similarly to the discussion in  \cite[Proposition 1.11]{APS19}, we have  several cases (note that in our set-up there are no green punctures   in  the initial  dissection of the marked surface, so some of the cases discussed in \cite{APS19} do not appear here). As shown in \cite[Proposition 1.11]{APS19}, for all cases, there are exactly  \( |M^{\gpoint}|+|P|+b+2g-3 \) arcs in an admissible~$\gpoint$-dissection of $(S_\zg,M_\zg)$, where $g$ is the genus of $S$ and $b$ is the number of connected components of $\partial S$.

 \smallskip

\noindent \emph{Case 1:~$\gamma$ is a non-separating curve starting and ending on two different boundary components.}
 In this case,~$(S_\zg,M_\zg)$ is a marked surface with~$|M^{\gpoint}|$ marked points~$\gpoint$,~$|P|$ punctures,
 ~$b-1$ boundary components, and genus~$g$.
 \smallskip

 \noindent \emph{Case 2:~$\gamma$ is a non-separating curve starting and ending on the same boundary component.}
 In this case,~$(S_\zg,M_\zg)$ is a marked surface with~$|M^{\gpoint}|$ ~$\gpoint$-marked points,~$|P|$ punctures,
 ~$b+1$ boundary components, and genus~$g-1$.

 \smallskip

 \noindent \emph{Case 3:~$\gamma$ is a separating curve starting and ending on the same boundary component.}
 In this case,~$(S_\zg,M_\zg)$ is a disjoint union of two marked surfaces, with a total of~$|M^{\gpoint}|$ ~$\gpoint$-marked points,~$|P|$ punctures,
 ~$b+1$ boundary components, and such that the sum of their genuses is equal to $g$.

 \smallskip

Note that the assumption that $\zg$ is not a loop ensures that the cut marked surface $(S_\zg,M_\zg)$  has  no $\gpoint$-punctures.

\subsection{Identifying arcs in the cut surface}\label{subsection:identify arcs by cutting}
Recall that we always assume the curves are in minimal position with regards to the initial $\rpoint$-dissection. All curves which intersect $\zg$ in the interior of $S$ will disappear after cutting. In particular, this holds for any closed curve $\alpha$.  Namely, either $\alpha$ intersects $\gamma$ and then there is no corresponding curve in the cut surface or $\alpha$ does not intersect $\gamma$ and then there is a unique corresponding closed curve in $(S_\zg,M_\zg)$ (up to homotopy). The case of admissible arcs is more complicated, since distinct admissible arcs in $(S,M)$ might be identified in $(S_\zg,M_\zg)$. In order to explicitly describe this, we define an equivalence relation on admissible arcs as follows.

\begin{definition}
Let $\za$ and $\zb$ be two admissible arcs intersecting $\gamma$ at most in their endpoints. We write $\za\mathop{\sim}\limits_{q}^{\zg}\zb$, if $\za$ and $\zg$ intersect at a $\gpoint$-point $q$ and $\zb$ is obtained from $\za$ by  smoothing the crossing with $\zg$ at $q$. We write  $\za\s{\zg}\sim \zb$ if $\za$ and $\zb$ are \emph{$\zg$-smoothing equivalent}, that is, $\zb$ can be obtained from $\za$ by iterated smoothings with $\zg$ at either endpoint of $\gamma$.
\end{definition}

\begin{remark}
Assume $p$ and $q$ are the endpoints of $\zg$. Note that if $\za\mathop{\sim}\limits_{q}^{\zg}\zb$, where $\za$ and $\zg$ intersect  at $q$. Then $\zb$ intersects $\gamma$ at $p$ and $\zb\mathop{\sim}\limits_{p}^{\zg}\za$.
It is then straightforward to verify that
the $\zg$-smoothing equivalence is an equivalence relation on  the  set of admissible arcs on $(S,M)$ that intersect $\gamma$ at most in their endpoints.
\end{remark}

Let $\za$ be an admissible arc, we denote the  $\zg$-smoothing equivalence class of $\za$ by $[\za]_\zg$.
Note that $[\za]_\zg$ has only one element if and only if $\za$ does not intersect with $\zg$ at the boundary of $S$.
The following lemma is a direct consequence of the definitions.

\begin{lemma}\label{lemma:smoothing equivalent-identify}
Two admissible arcs $\za$ and $\zb$ in $(S,M)$,  which intersect $\gamma$ at most in their endpoints, are identified in $(S_\zg,M_\zg)$ if and only if they are in the same $\zg$-smoothing equivalence class.
\end{lemma}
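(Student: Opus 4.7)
My plan is to read off both implications from a factorisation of the cut-and-contract construction of $(S_\zg, M_\zg)$ through an intermediate surface. Let $S'$ denote the surface obtained from $S$ by cutting along $\zg$, equipped with a gluing map $\pi_1 \colon S' \to S$ that identifies the two copies of $\zg$ appearing in $\partial S'$, and a contraction map $\pi_2 \colon S' \to (S_\zg, M_\zg)$ that collapses each of these two copies of $\zg$ to a single point ($pq$ and $p'q'$, respectively). Any admissible arc $\za$ in $S$ meeting $\zg$ at most at its endpoints lifts uniquely to an arc $\tilde\za$ in $S'$, whose endpoints in $\{p,q\}$ are relocated onto the copy of $\zg$ on the side from which $\za$ approaches; the image of $\za$ in $(S_\zg,M_\zg)$ is then precisely $\pi_2(\tilde\za)$.

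For the sufficiency direction I will reduce to a single smoothing $\za \mathop{\sim}\limits_{q}^{\zg} \zb$ and then induct on the length of a chain relating $\za$ and $\zb$. By definition $\zb$ is homotopic in $S$, rel the non-$q$ endpoint of $\za$, to the concatenation $\za \star \zg$ smoothed at the corner $q$. Lifting to $S'$ and applying $\pi_2$, the factor $\zg$ descends to a constant path at $pq$ (or $p'q'$), so $\pi_2(\tilde\zb)$ and $\pi_2(\tilde\za)$ are homotopic in $(S_\zg, M_\zg)$, as required.

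For the necessity direction I will use that $\pi_2$ collapses two contractible boundary segments and therefore is a homotopy equivalence. Consequently, a homotopy in $(S_\zg, M_\zg)$ between $\pi_2(\tilde\za)$ and $\pi_2(\tilde\zb)$ lifts to a homotopy in $S'$ between $\tilde\za$ and $\tilde\zb$ whose endpoints may slide freely along the two collapsed copies of $\zg$. Pushing this homotopy forward via $\pi_1$ produces a one-parameter family of arcs in $S$ deforming $\za$ into $\zb$, with endpoints allowed to travel along $\zg$. After a generic perturbation the only non-trivial events will be instants at which an endpoint crosses $p$ or $q$ transversally, and I will check that each such crossing changes the homotopy class of the evolving arc in $S$ by precisely one $\zg$-smoothing at the corresponding endpoint of $\zg$. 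Concatenating these events yields a finite sequence of smoothings taking $\za$ to $\zb$, and hence $\za \s{\zg}\sim \zb$.

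The chief obstacle is making the discrete-event analysis in the necessity direction rigorous: I must perturb the sliding homotopy in $S'$ so that endpoints cross $p$ and $q$ transversally and one at a time, and verify that each crossing corresponds bijectively to a single smoothing move as in the paper's definition. Extra care will be needed when $\za$ or $\zb$ is a loop based at $p$ or $q$, or when $p$ and $q$ lie on the same copy of $\zg$, since then both endpoints of the evolving arc inhabit the same collapsed segment and can interact during the homotopy.
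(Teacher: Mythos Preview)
The paper itself offers no proof of this lemma beyond the phrase ``a direct consequence of the definitions''; the subsequent Lemma~\ref{lemma:position} then makes this explicit by enumerating the $\zg$-smoothing equivalence classes case by case. Your proposal is therefore not so much a different route as a genuine filling-in of what the paper leaves implicit, and the overall architecture via the intermediate cut-but-not-contracted surface $S'$ and the factorisation $\pi_2\circ\pi_1^{-1}$ is sound. The sufficiency direction is clean and correct as you state it.

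For the necessity direction your plan works, but you are over-engineering it. The phrase ``crosses $p$ or $q$ transversally'' is misleading: $p$ and $q$ are the \emph{endpoints} of the interval $\zg$, so the sliding endpoint of the lifted homotopy in $S'$ is confined to a single copy $\zg_L$ (or $\zg_R$) and never crosses anything --- it simply starts at one of $\{p_L,q_L\}$ and ends at one of $\{p_L,q_L\}$. Because this copy of $\zg$ is contractible, the homotopy class of the lifted arc rel an endpoint on $\zg_L$ depends only on which of the two vertices that endpoint sits at. Hence no generic perturbation or transversality argument is needed: if the endpoints of $\tilde\za$ and $\tilde\zb$ on $\zg_L\cup\zg_R$ agree, then the lifted homotopy may be taken rel endpoints and $\za\simeq\zb$ already in $S$; if they differ, a single smoothing at the corresponding vertex corrects the discrepancy. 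This gives at most one smoothing per endpoint, exactly matching the explicit equivalence classes (of size at most four) that the paper exhibits in Lemma~\ref{lemma:position}. The loop and same-side cases you flag as delicate are absorbed automatically by this bookkeeping, since each half-edge of $\za$ is handled independently.
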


\begin{lemma}\label{lemma:position}
Let $\za$ be an admissible arc in $S$ which intersects  $\zg$ only in either one or both  of its endpoints. Then the following hold.

(1) We have three  cases of relative positions of $\za$ and $\zg$ with respect to their intersection. They are depicted in Figure \ref{figure:possible positions between a and g}, where $\za$ can be equal to any of the $\za_i$, for  $1\leq i\leq4$, where all the triangles are distinguished triangles on the surfaces. In case I, the black bullet $q_3$ corresponds to either a $\gpoint$-marked point or a $\rpoint$-puncture.

(2) Any two admissible arcs intersecting with $\zg$ at their endpoints are $\zg$-smoothing equivalent if and only if they both correspond to one of the arcs $\za_i$ in the same case of Figure \ref{figure:possible positions between a and g}.

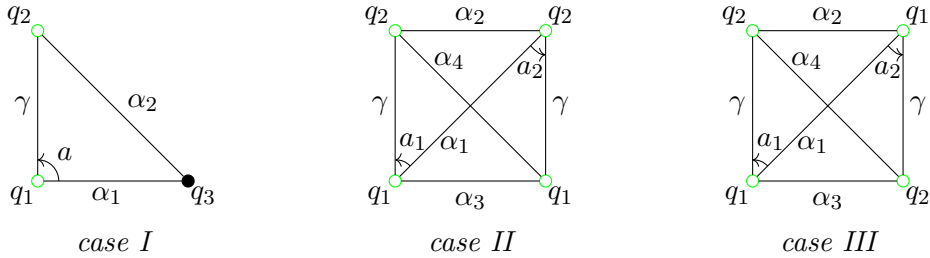
\begin{figure}[H]
\begin{center}
{\begin{tikzpicture}[scale=0.4]
\draw[green] (0,0) circle [radius=0.2];
\draw[green] (0,5) circle [radius=0.2];
\draw[fill] (5,0) circle [radius=0.2];

\draw[-] (0,0.2) -- (0,4.8);

\draw[-] (0.2,0) -- (4.8,0);

\draw[-] (0.15,4.85) -- (4.85,0.15);

\draw[] (3.5,2.6) node {$\za_2$};

\draw (2.3,-.5) node {$\za_1$};

\draw (-.5,2.5) node {$\zg$};

\draw (-.6,5.5) node {$q_2$};
\draw (-.5,-.5) node {$q_1$};
\draw (5.5,-.5) node {$q_3$};

\draw (0.9,0.9) node {$a$};
\draw[->] (0.7,0) to [out=90,in=0] (0,0.7);

\draw (2.5,-2) node {case I};
\draw (2.5,-3.5) node {};
\end{tikzpicture}}
\qquad\qquad
{\begin{tikzpicture}[scale=0.4]
\draw[green] (0,0) circle [radius=0.2];
\draw[green] (0,5) circle [radius=0.2];
\draw[green] (5,5) circle [radius=0.2];
\draw[green] (5,0) circle [radius=0.2];

\draw[-] (0,0.2) -- (0,4.8);
\draw[-] (5,0.2) -- (5,4.8);

\draw[-] (0.2,0) -- (4.8,0);
\draw[-] (0.2,5) -- (4.8,5);

\draw[-] (0.15,0.15) -- (4.85,4.85);
\draw[-] (0.15,4.85) -- (4.85,0.15);

\draw (2,1.2) node {$\za_1$};
\draw[] (1.8,4) node {$\za_4$};
\draw[] (2.5,5.5) node {$\za_2$};
\draw[] (2.5,-.65) node {$\za_3$};

\draw (5.5,2.5) node {$\zg$};
\draw (-.5,2.5) node {$\zg$};

\draw (-.6,5.5) node {$q_2$};
\draw (-.5,-.5) node {$q_1$};
\draw (5.5,-.5) node {$q_1$};
\draw (5.5,5.5) node {$q_2$};

\draw (0.6,1.3) node {$a_1$};
\draw (4.5,3.7) node {$a_2$};
\draw[->] (0.5,0.5) to [out=135,in=0] (0,0.7);
\draw[->] (4.5,4.5) to [out=315,in=180] (5,4.2);

\draw (2.5,-2) node {case II};
\draw (2.5,-3.5) node {};
\end{tikzpicture}}
\qquad\qquad
{\begin{tikzpicture}[scale=0.4]
\draw[green] (0,0) circle [radius=0.2];
\draw[green] (0,5) circle [radius=0.2];
\draw[green] (5,5) circle [radius=0.2];
\draw[green] (5,0) circle [radius=0.2];

\draw[-] (0,0.2) -- (0,4.8);
\draw[-] (5,0.2) -- (5,4.8);

\draw[-] (0.2,0) -- (4.8,0);
\draw[-] (0.2,5) -- (4.8,5);

\draw[-] (0.15,0.15) -- (4.85,4.85);
\draw[-] (0.15,4.85) -- (4.85,0.15);

\draw (2,1.2) node {$\za_1$};
\draw[] (1.8,4) node {$\za_4$};
\draw[] (2.5,5.5) node {$\za_2$};
\draw[] (2.5,-.65) node {$\za_3$};

\draw (5.5,2.5) node {$\zg$};
\draw (-.5,2.5) node {$\zg$};

\draw (-.6,5.5) node {$q_2$};
\draw (-.5,-.5) node {$q_1$};
\draw (5.5,-.5) node {$q_2$};
\draw (5.5,5.5) node {$q_1$};

\draw (0.6,1.3) node {$a_1$};
\draw (4.5,3.7) node {$a_2$};
\draw[->] (0.5,0.5) to [out=135,in=0] (0,0.7);
\draw[->] (4.5,4.5) to [out=315,in=180] (5,4.2);

\draw (2.5,-2) node {case III};
\draw (2.5,-3.5) node {};
\end{tikzpicture}}
\end{center}
\begin{center}
\caption{Possible positions of an arc $\za_i$ intersecting $\zg$ at its endpoints.}\label{figure:possible positions between a and g}
\label{figure:possible intersections}
\end{center}
\end{figure}
\end{lemma}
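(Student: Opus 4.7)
The plan is to prove the two parts separately, beginning with a case analysis based on the number of endpoints that $\alpha$ shares with $\zg$, and then translating the resulting geometric picture into statements about smoothings.

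For part (1), I would first split into two situations. The first is when $\alpha$ shares exactly one endpoint with $\zg$, which we may call $q_1$; the other endpoint $q_3$ of $\alpha$ is then either a $\gpoint$-point on the boundary or a $\rpoint$-puncture. By Proposition-Definition \ref{prop-def-distinguished triangle on the surface}, the smoothing $\alpha'$ of $\alpha$ and $\zg$ at $q_1$ is an admissible arc with endpoints $q_2$ and $q_3$, and the triangle formed by $\alpha$, $\zg$ and $\alpha'$ is a distinguished triangle on the surface. Setting $\alpha_1=\alpha$ and $\alpha_2=\alpha'$ recovers Case I exactly. The second situation is when both endpoints of $\alpha$ lie on $\zg$, necessarily at some combination of $q_1$ and $q_2$. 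A standard picture-argument is to cut a small tubular neighborhood of $\zg$ along $\zg$ to obtain a rectangle whose two vertical sides are identified copies of $\zg$; the curve $\alpha$ appears as a chord of this rectangle. Up to homotopy in $(S,M)$, such a chord is determined by which pair of corners it joins, and the identification of the two vertical copies of $\zg$ is either orientation-preserving or orientation-reversing, which distinguishes Cases II and III.

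For part (2), the ``if'' direction is verified directly by explicit smoothings. In Case I, the relation $\alpha_1\mathop{\sim}\limits^{\zg}_{q_1}\alpha_2$ is immediate from the construction of $\alpha_2$. In Cases II and III, smoothing the diagonal $\alpha_1$ with $\zg$ at $q_1$ produces $\alpha_2$, smoothing at $q_2$ produces $\alpha_3$, and one further smoothing connects $\alpha_2$ or $\alpha_3$ to $\alpha_4$, so all four arcs lie in a single $\zg$-smoothing class. Conversely, for the ``only if'' direction, I would argue that smoothing preserves the configuration class: starting from any $\alpha$ in Case I, the only non-trivial smoothing with $\zg$ produces the other member of the pair $\{\alpha_1,\alpha_2\}$; in Cases II or III, a smoothing at either endpoint of $\zg$ stays within the set $\{\alpha_1,\alpha_2,\alpha_3,\alpha_4\}$ by inspection of the rectangle picture. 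Iterating shows the equivalence class of $\alpha$ is contained in, and hence equal to, the configuration class of $\alpha$.

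The main obstacle will be handling the degenerate configurations cleanly, namely when $\zg$ is a loop (so that $q_1=q_2$ as points of $S$ but are treated as distinct half-edges, as already flagged in the paragraph preceding Lemma \ref{lemma:equalities}), when $\alpha$ is itself a loop, or when $\alpha$ is an infinite arc with $q_3$ a $\rpoint$-puncture. In each of these sub-situations the ``cut open along $\zg$'' rectangle picture must be adapted, and one must check that none of the supposedly distinct $\alpha_i$ are forced to be homotopic on $(S,M)$. A secondary subtlety is that in Case III the two vertical copies of $\zg$ are glued with a twist, so the admissibility of the resulting arcs (i.e. the existence of at least one $\rpoint$-point in each enclosed subsurface) must be verified case by case to guarantee all four $\alpha_i$ are legitimate admissible arcs and not contractible curves.
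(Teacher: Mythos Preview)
Your overall strategy --- a case split for part (1) followed by explicit smoothing computations for part (2) --- matches the paper's, and your treatment of part (2) is essentially what the paper does (it writes out the cyclic smoothing sequences $\za_1\sim\za_2\sim\za_4\sim\za_3\sim\za_1$ in Cases II and III and checks these exhaust the class).

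However, your argument for part (1) has a genuine gap. The ``tubular neighborhood rectangle'' picture is misleading on two counts. First, $\za$ is not a chord of this rectangle: only the germs of $\za$ near its endpoints lie in the tubular neighborhood, while the rest of $\za$ wanders through $S$. Second, and more seriously, your proposed dichotomy ``the identification of the two vertical copies of $\zg$ is either orientation-preserving or orientation-reversing'' does not exist: $S$ is oriented and $\zg$ is an embedded arc with two distinct endpoints, so the tubular neighborhood is an honest rectangle with a single canonical gluing. The distinction between Cases II and III is not about how the rectangle is glued; it is about whether the two ends of $\za$ approach $\zg$ from opposite sides (Case II) or from the same side (Case III). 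The paper sidesteps this by an exhaustive enumeration: it introduces $\mathrm{nep}(\za)\in\{1,2\}$ and draws eight explicit sub-configurations (labelled (1.1), (1.1)$'$, (1.2), (1.3), (1.3)$'$, (2.1), (2.2), (2.2)$'$), then assigns each to one of Cases I--III.

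Finally, one of your anticipated obstacles is spurious: the standing hypothesis throughout this section is that $\zg$ has no self-intersections, hence is not a loop; the paper records this as $\mathrm{nep}(\zg)=2$ at the start of its proof, so that degenerate case never arises.
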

\begin{proof}
(1) For an arc $\zb$, denote by nep($\zb$) the number of distinct endpoints of $\zb$. That is, nep($\zb$)$=1$ if $\zb$ starts and ends at the same point and nep($\zb$)$=2$ if $\zb$ has distinct endpoints. Note that we have nep($\zg)=2$, since in this section we assume that  $\gamma$ is not a loop.
We list in the following all the possibilities of the relative positions of $\za$ and $\zg$ case by case. In Figure (1.2), the fat black point is used to illustrate that $\za$ and $\zg$ are not homotopic, while the  the fat black points in Figures (2.2) and (2.2)$^\prime$ are used to illustrate that $\za$ is not contractible.

1. nep($\za$)=2.
\begin{center}
{\begin{tikzpicture}[scale=0.5]
	\draw[green] (0,0) circle [radius=0.1];
	\draw[green] (5,0) circle [radius=0.1];
	\draw[fill] (5,-3) circle [radius=0.1];
	\draw[](0.1,0)to(4.9,0);
	\draw (2.5,.5) node {$\zg$};	
	\draw[](0.1,-0.05)to(4.93,-2.93);
	\draw[blue](5,-0.05)to(5,-2.93);
	\draw[blue] (5.5,-1.5) node {$\za_1$};	
	\draw (2.5,-2.2) node {$\za$};	
	
	\draw[bend right](-.2,-.6)to(-.2,.6);
	\draw[-]  (-0.1,-.33)to(-.4,-.6);
	\draw[-]  (-.05,-.1)to(-.4,-.4);
	\draw[-]  (-.05,.1)to(-.4,-.2);
	\draw[-]  (-.05,.3)to(-.4,0);
	\draw[-]  (-.1,.45)to(-.4,0.2);

	\draw[bend left](5.2,-.6)to(5.2,.6);
	\draw[-]  (5.35,-.35)to(5.15,-.5);
	\draw[-]  (5.35,-.15)to(5.1,-.35);
	\draw[-]  (5.35,.05)to(5.05,-.2);
	\draw[-]  (5.35,.25)to(5.05,0);
	\draw[-]  (5.35,.45)to(5.05,0.2);
\draw (6,-3) node {$q_3$};
\draw (-1,0) node {$q_1$};
\draw (6,0) node {$q_2$};

	\draw (2.5,-5) node {(1.1)};
	\end{tikzpicture}}
\qquad\qquad
{\begin{tikzpicture}[scale=0.5]
	\draw[green] (0,0) circle [radius=0.1];
	\draw[green] (5,0) circle [radius=0.1];
	\draw[fill] (5,-3) circle [radius=0.1];
	\draw[](0.1,0)to(4.9,0);
	\draw (2.5,.5) node {$\zg$};	
	\draw[blue](0.1,-0.05)to(4.93,-2.93);
	\draw[blue] (2.5,-2.2) node {$\za_1$};	
		\draw[](5,-0.05)to(5,-2.93);
	\draw[] (5.5,-1.5) node {$\za$};	

	\draw[bend right](-.2,-.6)to(-.2,.6);
	\draw[-]  (-0.1,-.33)to(-.4,-.6);
	\draw[-]  (-.05,-.1)to(-.4,-.4);
	\draw[-]  (-.05,.1)to(-.4,-.2);
	\draw[-]  (-.05,.3)to(-.4,0);
	\draw[-]  (-.1,.45)to(-.4,0.2);

	\draw[bend left](5.2,-.6)to(5.2,.6);
	\draw[-]  (5.35,-.35)to(5.15,-.5);
	\draw[-]  (5.35,-.15)to(5.1,-.35);
	\draw[-]  (5.35,.05)to(5.05,-.2);
	\draw[-]  (5.35,.25)to(5.05,0);
	\draw[-]  (5.35,.45)to(5.05,0.2);
\draw (6,-3) node {$q_3$};
\draw (-1,0) node {$q_1$};
\draw (6,0) node {$q_2$};

	\draw (2.5,-5) node {(1.1)$^\prime$};
	\end{tikzpicture}}
\qquad\qquad
{\begin{tikzpicture}[scale=0.8]
	\draw[green] (0,0) circle [radius=0.1];
	\draw[green] (5,0) circle [radius=0.1];
	\draw[fill] (2.5,0) circle [radius=0.15];
	\draw[](0,0)..controls (1.5,2.5) and (3.5,2.5)..(5,0);
	\draw (2.5,2.2) node {$\zg$};	
	\draw[](0,0)..controls (1.5,-2.5) and (3.5,-2.5)..(5,0);
	\draw[blue] (1.3,.9) node {$\za_1$};
	\draw[blue] (3.7,-.9) node {$\za_2$};
	\draw[blue] (3.8,.15) node {$\za_3$};
	\draw (2.5,-2.2) node {$\za$};

	\draw[bend right](-.2,-.6)to(-.2,.6);
	\draw[-]  (-0.1,-.33)to(-.4,-.6);
	\draw[-]  (-.05,-.1)to(-.4,-.4);
	\draw[-]  (-.05,.1)to(-.4,-.2);
	\draw[-]  (-.05,.3)to(-.4,0);
	\draw[-]  (-.1,.45)to(-.4,0.2);

	\draw[bend left](5.2,-.6)to(5.2,.6);
	\draw[-]  (5.35,-.35)to(5.15,-.5);
	\draw[-]  (5.35,-.15)to(5.1,-.35);
	\draw[-]  (5.35,.05)to(5.05,-.2);
	\draw[-]  (5.35,.25)to(5.05,0);
	\draw[-]  (5.35,.45)to(5.05,0.2);

	\draw[blue](0,0)..controls (4.5,2.5) and (4.5,-2.5)..(0,0);
	\draw[blue](5,0)..controls (.5,2.5) and (.5,-2.5)..(5,0);
	\draw[blue]plot [smooth,tension=1] coordinates {(0,0) (1.5,-.3) (2.5,-.4) (3,0) (2.5,.3) (2.1,0) (2.5,-.4) (3.5, -.3) (5,0)};

\draw (-1,0) node {$q_1$};
\draw (6,0) node {$q_2$};

	\draw (2.5,-3.5) node {(1.2)};
	\end{tikzpicture}}
\end{center}

\begin{center}
{\begin{tikzpicture}[scale=0.35]
			\draw[green] (0,0) circle [radius=0.1];
			\draw[green] (8,0) circle [radius=0.1];

			\draw[bend right](-.3,-1)to(-.3,1);
			\draw[-]  (-.1,-.5)to(-.6,-1);
			\draw[-]  (0,-.2)to(-.6,-.8);
			\draw[-]  (-.05,-.1)to(-.6,-.6);
			\draw[-]  (-.05,.1)to(-.6,-.4);
			\draw[-]  (-.05,.3)to(-.6,-.2);
			\draw[-]  (-.05,.5)to(-.6,0);
			\draw[-]  (-.15,.6)to(-.6,0.2);
			\draw[-]  (-.15,.8)to(-.6,0.4);
			
			\draw[](7.4,-1)..controls (8.2,-.2) and (8.2,0.2)..(7.4,1);
			\draw[-]  (0+8,-.2)to(-.6+8,-.8);
			\draw[-]  (-.05+8,-.1)to(-.6+8,-.6);
			\draw[-]  (-.05+8,.1)to(-.6+8,-.4);
			\draw[-]  (-.05+8,.3)to(-.6+8,-.2);
			\draw[-]  (-.05+8-.1,.5-.1)to(-.6+8,0);
			\draw[-]  (-.15+8-.08,.6-.08)to(-.6+8,0.2);
			\draw[-]  (-.15+8-.15,.8-.15)to(-.6+8,0.4);
			
			\draw[blue] (6,0) circle [radius=6];
			\draw[blue] (13,0) node {$\za_2$};
			
			\draw[](0,0)..controls (6,4) and (10,3)..(8,0);
			\draw (5,3) node {$\zg$};
			
			\draw[](0,0)..controls (6,-4) and (10,-3)..(8,0);
			\draw (5,-3) node {$\za$};
			
			\draw[blue]plot [smooth,tension=1] coordinates {(8,0) (7.4,1.3) (5.7,0)  (7.4,-1.3) (8,0)};
			\draw[blue] (6.8,0) node {$\za_1$};
			
			\draw[blue]plot [smooth,tension=1] coordinates {(8,0) (8,1.4) (5.7,1.2) (5.7,-1.2) (9,-1) (9,3) (4,4) (0,0)};
			\draw[blue] (7,5) node {$\za_3$};

\draw (-1.2,0) node {$q_1$};
\draw (8.8,0) node {$q_2$};

	        \draw (6,-9) node {(1.3)};
			
			
			\end{tikzpicture}}
\qquad
{\begin{tikzpicture}[scale=0.35]
			\draw[green] (0,0) circle [radius=0.1];
			\draw[green] (8,0) circle [radius=0.1];

			\draw[bend right](-.3,-1)to(-.3,1);
			\draw[-]  (-.1,-.5)to(-.6,-1);
			\draw[-]  (0,-.2)to(-.6,-.8);
			\draw[-]  (-.05,-.1)to(-.6,-.6);
			\draw[-]  (-.05,.1)to(-.6,-.4);
			\draw[-]  (-.05,.3)to(-.6,-.2);
			\draw[-]  (-.05,.5)to(-.6,0);
			\draw[-]  (-.15,.6)to(-.6,0.2);
			\draw[-]  (-.15,.8)to(-.6,0.4);
			
			\draw[](7.4,-1)..controls (8.2,-.2) and (8.2,0.2)..(7.4,1);
			\draw[-]  (0+8,-.2)to(-.6+8,-.8);
			\draw[-]  (-.05+8,-.1)to(-.6+8,-.6);
			\draw[-]  (-.05+8,.1)to(-.6+8,-.4);
			\draw[-]  (-.05+8,.3)to(-.6+8,-.2);
			\draw[-]  (-.05+8-.1,.5-.1)to(-.6+8,0);
			\draw[-]  (-.15+8-.08,.6-.08)to(-.6+8,0.2);
			\draw[-]  (-.15+8-.15,.8-.15)to(-.6+8,0.4);
			
			\draw[] (6,0) circle [radius=6];
			\draw[blue] (13,0) node {$\za_2$};
			
			\draw[](0,0)..controls (6,4) and (10,3)..(8,0);
			\draw (5,3) node {$\za$};
			
			\draw[](0,0)..controls (6,-4) and (10,-3)..(8,0);
			\draw (5,-3) node {$\zg$};
			
			\draw[blue]plot [smooth,tension=1] coordinates {(8,0) (7.4,1.3) (5.7,0)  (7.4,-1.3) (8,0)};
			\draw[blue] (6.8,0) node {$\za_1$};
			
			
			\draw[blue]plot [smooth,tension=1] coordinates {(8,0) (7.8,-1.6) (5.5,-1.2) (5.5,1.4) (9,1.5) (9,-3) (4,-4) (0,0)};
			\draw[blue] (7,-5) node {$\za_3$};

\draw (-1.2,0) node {$q_1$};
\draw (8.8,0) node {$q_2$};

			\draw (6,-9) node {(1.3)$^\prime$};
			
			\end{tikzpicture}}
\end{center}

2. nep($\za$)=1.
\begin{center}
{\begin{tikzpicture}[scale=0.35]
			\draw[green] (0,0) circle [radius=0.1];
			\draw[green] (8,0) circle [radius=0.1];

			\draw[bend right](-.3,-1)to(-.3,1);
			\draw[-]  (-.1,-.5)to(-.6,-1);
			\draw[-]  (0,-.2)to(-.6,-.8);
			\draw[-]  (-.05,-.1)to(-.6,-.6);
			\draw[-]  (-.05,.1)to(-.6,-.4);
			\draw[-]  (-.05,.3)to(-.6,-.2);
			\draw[-]  (-.05,.5)to(-.6,0);
			\draw[-]  (-.15,.6)to(-.6,0.2);
			\draw[-]  (-.15,.8)to(-.6,0.4);
			
			\draw[](7.4,-1)..controls (8.2,-.2) and (8.2,0.2)..(7.4,1);
			\draw[-]  (0+8,-.2)to(-.6+8,-.8);
			\draw[-]  (-.05+8,-.1)to(-.6+8,-.6);
			\draw[-]  (-.05+8,.1)to(-.6+8,-.4);
			\draw[-]  (-.05+8,.3)to(-.6+8,-.2);
			\draw[-]  (-.05+8-.1,.5-.1)to(-.6+8,0);
			\draw[-]  (-.15+8-.08,.6-.08)to(-.6+8,0.2);
			\draw[-]  (-.15+8-.15,.8-.15)to(-.6+8,0.4);
			
			\draw (6,0) circle [radius=6];
			\draw (13,0) node {$\za$};
			
			\draw[](0,0)..controls (6,4) and (10,3)..(8,0);
			\draw (5,3) node {$\zg$};
			
			\draw[blue](0,0)..controls (6,-4) and (10,-3)..(8,0);
			\draw[blue] (5,-3) node {$\za_1$};
			
			\draw[blue]plot [smooth,tension=1] coordinates {(8,0) (7.4,1.3) (5.7,0)  (7.4,-1.3) (8,0)};
			\draw[blue] (6.8,0) node {$\za_3$};
			
			\draw[blue]plot [smooth,tension=1] coordinates {(8,0) (8,1.4) (5.7,1.2) (5.7,-1.2) (9,-1) (9,3) (4,4) (0,0)};
			\draw[blue] (7,5) node {$\za_2$};

\draw (-1.2,0) node {$q_1$};
\draw (8.8,0) node {$q_2$};

	        \draw (6,-9) node {(2.1)};
			\end{tikzpicture}}
\end{center}
\begin{center}
{\begin{tikzpicture}[scale=0.8]
	\draw[green] (0,0) circle [radius=0.1];
	\draw[green] (5,0) circle [radius=0.1];
	\draw[fill] (2.5,0) circle [radius=0.15];
	\draw(0,0)..controls (1.5,2.5) and (3.5,2.5)..(5,0);
	\draw (2.5,2.2) node {$\zg$};	
	\draw[blue](0,0)..controls (1.5,-2.5) and (3.5,-2.5)..(5,0);
	\draw (1.3,.9) node {$\za$};
	\draw[blue] (3.7,-.9) node {$\za_3$};
	\draw[blue] (3.8,.15) node {$\za_2$};
	\draw[blue] (2.5,-2.2) node {$\za_1$};

	\draw[bend right](-.2,-.6)to(-.2,.6);
	\draw[-]  (-0.1,-.33)to(-.4,-.6);
	\draw[-]  (-.05,-.1)to(-.4,-.4);
	\draw[-]  (-.05,.1)to(-.4,-.2);
	\draw[-]  (-.05,.3)to(-.4,0);
	\draw[-]  (-.1,.45)to(-.4,0.2);

	\draw[bend left](5.2,-.6)to(5.2,.6);
	\draw[-]  (5.35,-.35)to(5.15,-.5);
	\draw[-]  (5.35,-.15)to(5.1,-.35);
	\draw[-]  (5.35,.05)to(5.05,-.2);
	\draw[-]  (5.35,.25)to(5.05,0);
	\draw[-]  (5.35,.45)to(5.05,0.2);

	\draw(0,0)..controls (4.5,2.5) and (4.5,-2.5)..(0,0);
	\draw[blue](5,0)..controls (.5,2.5) and (.5,-2.5)..(5,0);
	\draw[blue]plot [smooth,tension=1] coordinates {(0,0) (1.5,-.3) (2.5,-.4) (3,0) (2.5,.3) (2.1,0) (2.5,-.4) (3.5, -.3) (5,0)};

\draw (-1,0) node {$q_1$};
\draw (6,0) node {$q_2$};

	\draw (2.5,-3.5) node {(2.2)};
	\end{tikzpicture}}
\qquad\qquad
{\begin{tikzpicture}[scale=0.8]
	\draw[green] (0,0) circle [radius=0.1];
	\draw[green] (5,0) circle [radius=0.1];
	\draw[fill] (2.5,0) circle [radius=0.15];
	\draw[blue](0,0)..controls (1.5,2.5) and (3.5,2.5)..(5,0);
	\draw[blue] (2.5,2.2) node {$\za_1$};	
	\draw(0,0)..controls (1.5,-2.5) and (3.5,-2.5)..(5,0);
	\draw (1.3,.9) node {$\za$};
	\draw[blue] (3.7,-.9) node {$\za_3$};
	\draw[blue] (3.8,.15) node {$\za_2$};
	\draw (2.5,-2.2) node {$\zg$};

	\draw[bend right](-.2,-.6)to(-.2,.6);
	\draw[-]  (-0.1,-.33)to(-.4,-.6);
	\draw[-]  (-.05,-.1)to(-.4,-.4);
	\draw[-]  (-.05,.1)to(-.4,-.2);
	\draw[-]  (-.05,.3)to(-.4,0);
	\draw[-]  (-.1,.45)to(-.4,0.2);

	\draw[bend left](5.2,-.6)to(5.2,.6);
	\draw[-]  (5.35,-.35)to(5.15,-.5);
	\draw[-]  (5.35,-.15)to(5.1,-.35);
	\draw[-]  (5.35,.05)to(5.05,-.2);
	\draw[-]  (5.35,.25)to(5.05,0);
	\draw[-]  (5.35,.45)to(5.05,0.2);

	\draw(0,0)..controls (4.5,2.5) and (4.5,-2.5)..(0,0);
	\draw[blue](5,0)..controls (.5,2.5) and (.5,-2.5)..(5,0);
	\draw[blue]plot [smooth,tension=1] coordinates {(0,0) (1.5,-.3) (2.5,-.4) (3,0) (2.5,.3) (2.1,0) (2.5,-.4) (3.5, -.3) (5,0)};

\draw (-1,0) node {$q_1$};
\draw (6,0) node {$q_2$};

	\draw (2.5,-3.5) node {(2.2)$^\prime$};
	\end{tikzpicture}}
\end{center}
Then (1.1), (1.1)$^\prime$ correspond to  case I in Figure~\ref{figure:possible intersections}, (1.3), (1.3)$^\prime$ and (2.1) correspond to  case II in Figure~\ref{figure:possible intersections}, and (1.2), (2.2) and (2.2)$^\prime$ correspond to case III in Figure~\ref{figure:possible intersections}.

(2) We consider the smoothing equivalence class $[\za_1]_\zg$ for each case in Figure \ref{figure:possible positions between a and g}. In case I,  $\zg$ and $\za_1$ only  intersect at $q_1$.
The arc resulting from smoothing the crossing of $\za_1$ and $\zg$ at $q_1$ is the arc $\za_2$. Then $\za_2$  has exactly one intersection  with $\zg$ at  $q_2$ and the smoothing of  $\za_2$ and $\zg$ at $q_2$ is $\za_1$. So $[\za_1]_\zg=\{\za_1,\za_2\}$ for case I.

In case II, $\zg$ and $\za_1$ intersect at two distinct $\gpoint$-points  $q_1$ and $q_2$.
Now we introduce the auxiliary notation of $q_1^\prime$ and $q_2^\prime$, allowing us to distinguish the endpoints on the boundary on  either side $\zg$. More precisely,  we  denote by $q_1^\prime$ and $q_2^\prime$ a new marked  point for each of  $q_1$ and $q_2$, respectively which lie on the `right' of $\gamma$ when looking from the surface to the boundary (analogously to the notation of $p'$ and $q'$ in Figures~\ref{Figure:Possible positions of a pre-silting arc} and \ref{Figure:Possible positions of a pre-silting arc-left}).
Then the arcs $\za_i, 1 \leq i \leq 4$ are connected by $\gamma$-smoothings as follows:
\[\za_1\mathop{\sim}\limits_{q_1}^{\zg}{\za_2}
\mathop{\sim}\limits_{q^\prime_2}^{\zg}
\za_4\mathop{\sim}\limits_{q_2}^{\zg}\za_3
\mathop{\sim}\limits_{q^\prime_1}^{\zg}\za_1.
\]
Note that these are all the possible smoothings of $\za_i, 1 \leq i \leq 4$
with $\zg$. So we have $[\za_1]_\zg=\{\za_1,\za_2,\za_3,\za_4\}$ for this case.

Case III is similar to  case II, namely, we also have $[\za_1]_\zg=\{\za_1,\za_2,\za_3,\za_4\}$.
The $\gamma$-smoothing sequence here is as follows, where we again denote the points $q_1$ and $q_2$ on the right by $q_1^\prime$ and $q_2^\prime$ respectively:
\[\za_1\mathop{\sim}\limits_{q_1}^{\zg}{\za_2}
\mathop{\sim}\limits_{q^\prime_1}^{\zg}
\za_4\mathop{\sim}\limits_{q_2}^{\zg}\za_3
\mathop{\sim}\limits_{q^\prime_2}^{\zg}\za_1.
\]
Then any two admissible arcs intersecting with $\zg$ at the endpoints are $\zg$-smoothing equivalent if and only if they belong to the same picture in Figure \ref{figure:possible positions between a and g}.

\end{proof}
Combining Lemmas \ref{lemma:smoothing equivalent-identify} and \ref{lemma:position}, we summarise the above discussion in the following proposition.

\begin{proposition}[Identifying arcs in the cut marked surface]\label{proposition:identify arcs}
Let $(S_\zg,M_\zg)$ be the cut marked surface of $(S,M)$ with respect to an $\gpoint$-arc $\zg$ without self-intersections. Let $\za$ be an $\gpoint$-arc in $(S,M)$.

(1) If $\za$ and $\zg$ intersect in the interior of $S$ then  $\za$ does not give rise to a curve in $(S_\zg,M_\zg)$.

(2) If $\za$ does not intersect $\zg$, then $\za$ is an arc in $S_\zg$ and there are no other arcs in $S$ which are identified with $\za$ in $(S_\zg,M_\zg)$.

(3) If $\za$ intersects $\zg$ at an endpoint then $\za=\za_i$, for $\za_i$ as in one of the pictures in Figure \ref{figure:possible positions between a and g}, and the other $\za_j$ in the same picture are exactly the arcs which are identified with $\za_i$ in $(S_\zg,M_\zg)$.
\end{proposition}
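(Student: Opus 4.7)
The plan is to derive this proposition as a direct bookkeeping consequence of Lemmas \ref{lemma:smoothing equivalent-identify} and \ref{lemma:position}, treating the three cases in turn.

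For part (2), I would observe that when $\alpha$ is disjoint from $\gamma$ (even at the endpoints), no $\zg$-smoothing can be performed on $\alpha$, so its $\zg$-smoothing equivalence class is $[\alpha]_\zg = \{\alpha\}$. Moreover, $\alpha$ lies entirely in the complement of $\gamma$, so the cutting and contraction procedure of Definition \ref{definition:cut surface} leaves $\alpha$ unchanged, and it remains an admissible $\gpoint$-arc of $(S_\zg, M_\zg)$. Lemma \ref{lemma:smoothing equivalent-identify} then immediately rules out the existence of any other arc of $(S,M)$ that becomes identified with $\alpha$ in $(S_\zg,M_\zg)$.

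For part (3), Lemma \ref{lemma:position}(1) shows that, up to the labelling, the local configuration of $\alpha$ and $\gamma$ at a shared endpoint must be one of the three pictures in Figure \ref{figure:possible positions between a and g}, so $\alpha = \alpha_i$ for some $i \in \{1,\ldots,4\}$ appearing in that picture. Lemma \ref{lemma:position}(2) then identifies the entire smoothing equivalence class $[\alpha]_\zg$ with the set of arcs $\{\alpha_j\}$ drawn in the same picture, and Lemma \ref{lemma:smoothing equivalent-identify} translates this equivalence class into the set of arcs of $(S,M)$ that are identified with $\alpha$ in $(S_\zg,M_\zg)$.

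For part (1), I would argue directly from Definition \ref{definition:cut surface}: if $\alpha$ meets $\gamma$ transversally at a point $p$ in the interior of $S$, then cutting along $\gamma$ severs $\alpha$ at $p$ into two subarcs, each with a new endpoint lying on one of the two new boundary segments produced from $\gamma$. The subsequent contraction of these boundary segments sends these new endpoints to the marked points $pq$ and $p'q'$ of $M_\zg^{\gpoint}$, so $\alpha$ is replaced by strictly shorter subarcs with different endpoints, and thus does not give rise to a single arc in $(S_\zg,M_\zg)$. The main obstacle here is to verify that no two of these subarcs can re-glue, after the contraction, into a curve homotopic to a single admissible arc representing $\alpha$; the clean way to see this is that the subarcs on the two sides of $\gamma$ near $p$ have their freshly created endpoints on the two \emph{distinct} marked points arising from $\gamma$ (namely $pq$ and $p'q'$ in the notation of Figures \ref{figure:cut 1} and \ref{figure:cut 2}), so they cannot be re-attached in the cut surface. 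Together with parts (2) and (3), this exhausts all possibilities for the interaction of $\alpha$ and $\gamma$ and completes the proof.
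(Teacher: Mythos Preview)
Your proposal is correct and follows exactly the paper's approach: the paper presents this proposition as an immediate summary of Lemmas~\ref{lemma:smoothing equivalent-identify} and~\ref{lemma:position} together with the observation, made at the start of subsection~\ref{subsection:identify arcs by cutting}, that curves meeting $\gamma$ in the interior disappear after cutting. You have simply spelled out the bookkeeping in each of the three cases, which is all that is required.
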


\subsection{Identifying objects in the silting reduction}\label{subsection:identify objects by reduction}

In the following we will study the silting reduction $\calk/\thick_\calk \calp$ and its geometric realization. Let $\calz=\lp\cap\rp$. Then by Theorem \ref{theorem:equivalence}, the composition $\calz\subset\calk\xrightarrow{\rho}\calk/\thick_\calk\calp$ of natural functors induces an equivalence of triangulated categories:
\begin{equation}\label{equation:natrual functor}
\bar{\rho}\colon{\calz_\calp}\stackrel{\simeq}{\longrightarrow}\calk/\thick_\calk \calp.
\end{equation}
So from now we will not distinguish the two categories and also refer to  ${\calz_\calp}$ as the silting reduction of $\calk$ by $\calp$.
We also recall that by Theorem \ref{theorem:object and map in derived category} (3), the indecomposable objects in $\calk$ only arise from graded $\gpoint$-arcs or graded closed curves, rather than the infinite arcs, which means the endpoint of a graded arc should not be a $\rpoint$-puncture. Recall that for this subsection we have a fixed pre-silting subcategory $\calp = add(\P_{(\zg,f_\zg)})$ where $\gamma$ is an $\gpoint$-arc which is not a loop.

\begin{lemma}[The objects in $\calz$]\label{lemma:the objects in z}
Let $\P_{(\za,f_\za)}$ be an object in $\calz$ corresponding to a graded curve $(\za,f_\za)$, then $\za$ and $\zg$ do not intersect in  the interior of $S$.
\end{lemma}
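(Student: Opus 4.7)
The plan is to argue by contradiction. Suppose $\alpha$ and $\gamma$ cross at a point $p$ in the interior of $S$. Since $\P_{(\za, f_\za)} \in \calk = K^b(\proj A)$, Theorem~\ref{theorem:object and map in derived category}(3) guarantees that $\alpha$ is either a graded $\gpoint$-arc or a graded closed curve; combined with the standing hypothesis that $\gamma$ is a $\gpoint$-arc with no self-intersections, this ensures that neither curve has an endpoint at a $\rpoint$-puncture, so $p$ is a genuine transversal crossing away from any puncture, contributing one element each to $\Int_S(\alpha, \gamma)$ and $\Int_S(\gamma, \alpha)$.

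Applying the local description of morphisms arising from an interior intersection (recalled just before Theorem~\ref{theorem:object and map in derived category2} and depicted in Figure~\ref{interior intersection}), the crossing at $p$ gives rise to two nonzero morphisms
\[
f\colon \P_{(\za, f_\za)} \longrightarrow \P_{(\zg, f_\zg)}[m]\qquad \text{and} \qquad g\colon \P_{(\zg, f_\zg)} \longrightarrow \P_{(\za, f_\za)}[n]
\]
in $\cald^b(A)$, for uniquely determined integers $m, n \in \Z$ depending on the values of $f_\za$ and $f_\zg$ at the points of $\alpha \cap \Delta_A^*$ and $\gamma \cap \Delta_A^*$ closest to $p$ on either side of the crossing. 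The key claim is then that $m + n = 1$. This is a purely local identity that I would verify using Definition~\ref{definition:gradings} in a manner parallel to the proof of Lemma~\ref{lemma:equalities}: the unique polygon of $\Delta_A^*$ containing $p$ houses exactly one $\gpoint$-marked point, and the transversal crossing at $p$ places that point on opposite sides of $\alpha$ and $\gamma$. Comparing the grading changes of $\alpha$ and $\gamma$ as they traverse this polygon with the matching conditions that determine $m$ on one side and $n$ on the other then forces the degree sum to be $1$.

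Given $m + n = 1$, the contradiction is immediate: either $m \geq 1$, in which case the nonzero morphism $f$ yields $\Hom_{\calk}(\P_{(\za, f_\za)}, \P_{(\zg, f_\zg)}[m]) \neq 0$ with $m > 0$, contradicting $\P_{(\za, f_\za)} \in \lp$; or $m \leq 0$, in which case $n \geq 1$, and the nonzero morphism $g$ yields $\Hom_{\calk}(\P_{(\zg, f_\zg)}, \P_{(\za, f_\za)}[n]) \neq 0$ with $n > 0$, contradicting $\P_{(\za, f_\za)} \in \rp$. Either way we contradict $\P_{(\za, f_\za)} \in \calz = \lp \cap \rp$, so no such interior intersection can exist. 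The main obstacle in executing this plan is the careful sign bookkeeping required to establish $m + n = 1$, but this is a purely local calculation at a single crossing, closely mirroring the argument of Lemma~\ref{lemma:equalities}; once the degree-sum identity is in hand, the $\calz$-condition directly precludes any interior intersection.
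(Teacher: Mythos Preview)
Your approach is correct and follows the same contradiction strategy as the paper's proof: both exploit the grading constraints that membership in $\lp$ and $\rp$ imposes at an interior crossing. The paper's execution differs only in packaging. Rather than abstracting the two morphisms as having degrees $m,n$ with $m+n=1$, the paper names the four nearest points $q_1,q_2,q_3,q_4$ of $\za\cap\Delta_A^*$ and $\zg\cap\Delta_A^*$ on either side of $p$, reads off two inequalities from $\lp$ and two from $\rp$ (one for each adjacent pair of $q_i$'s), chains them into the cycle
\[
f_\zg(q_4)\;\geq\; f_\za(q_3)\;\geq\; f_\zg(q_2)\;\geq\; f_\za(q_1)\;\geq\; f_\zg(q_4),
\]
forcing all four values to coincide, and then contradicts $f_\za(q_1)=f_\za(q_3)\pm 1$. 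Your version is the clean abstraction of this same computation.

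One caution about your key identity $m+n=1$: this is the correct relation in the surface model, but the informal description in this paper just before Theorem~\ref{theorem:object and map in derived category2} says the second morphism lands in degree ``$[1]$ (or $[-1]$ depending on the position of the corresponding $\gpoint$-marked point)'', which on a naive reading would allow $m+n=-1$ and break your argument (e.g.\ $m=0$, $n=-1$ would violate neither $\lp$ nor $\rp$). When you carry out the promised local calculation you must pin down the sign unambiguously. The paper's explicit four-point argument sidesteps this issue entirely by never isolating $m$ and $n$ individually, which is what it buys over your formulation.
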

\begin{proof}
For contradiction, suppose that $\alpha$ and $\zg$ intersect at some point $p$ in the interior of $S$. Without loss of generality we assume that the local configuration of the intersection is as follows. Let  $q_i, 1\leq i\leq 4,$ be the intersections nearest to $p$ in $\za\cap \zD_A^*$,  respectively in $\zg\cap \zD_A^*$.
	
	\begin{center}
		{\begin{tikzpicture}[scale=0.5]
			\draw[-] (0.15,0.15) -- (4.85,4.85);
			\draw[-] (0.15,4.85) -- (4.85,0.15);

			\draw[-,red] (3.5,0.8) -- (4.8,1);
			\draw[-,red] (3.9,4.8) -- (5.1,4.5);
			\draw[-,red] (0,4.3) -- (.9,4.7);
			\draw[-,red] (0.2,.7) -- (1,.6);

			\draw (1.1,3.5) node {$\za$};
			\draw (4,3.5) node {$\zg$};
			
			\draw (4.6,5.2) node {$q_4$};
			\draw (.6,5.2) node {$q_3$};
			\draw (.7,.1) node {$q_2$};
			\draw (4.1,0.4) node {$q_1$};
			\draw (2.5,2) node {$p$};
			\end{tikzpicture}}
	\end{center}
	Then since $\P_{(\za,f_\za)}\in \lp$, we have
$$f_\za(q_1)\geq f_\zg(q_4),~f_\za(q_3)\geq f_\zg(q_2),$$
and since $\P_{(\za,f_\za)}\in \rp$, we have
$$f_\za(q_3)\leq f_\zg(q_4),~f_\za(q_1)\leq f_\zg(q_2).$$
 So we have
$$f_\zg(q_4)\geq f_\za(q_3)\geq f_\zg(q_2)\geq f_\za(q_1)\geq f_\zg(q_4),$$
and thus they are all equal.
A contradiction since we have $f_\za(q_1)=f_\za(q_3)\pm 1$ and $f_\zg(q_4)=f_\zg(q_2)\pm 1$.
\end{proof}
\begin{remark}
We mention that the arcs corresponding to  indecomposable objects in $\calz$ may intersect with $\zg$ at  their endpoints. On the other hand, the inverse statement of the lemma does not hold, that is, even if $\za$ intersects $\zg$ at the boundary of $S$, rather than in the interior of $S$, $\za$ might not give rise to an object in $\calz$ for any grading $f_\za$. Examples of this are  the arcs $\za_2$ and $\za_3$ in Lemma \ref{lemma:reduction2} and Lemma \ref{lemma:reduction3} below.
\end{remark}

The following four lemmas determine when a graded $\gpoint$-arc corresponds to  an object in ${\calz_\calp}$, and describe the $\langle 1 \rangle$-orbits of this object in ${\calz_\calp}$. Each Lemma corresponds to either the case of $\za$ and $\zg$ not intersecting or to one of the three cases in Figure~\ref{figure:possible positions between a and g}.

\begin{lemma}\label{lemma:reduction0}
Let $\za$ be an $\gpoint$-arc which does not intersect $\zg$, then $\P_{(\za,f_\za)}$ is in ${\calz_\calp}$ for any grading $f_\za$. Moreover, the shift functor $\langle 1 \rangle$ in $\calz_\calp$ coincides with $[1]$, and the $\langle 1 \rangle$-orbit of $\P_{(\za,f_\za)}$ in ${\calz_\calp}$ coincides with the $[1]$-orbit of $\P_{(\za,f_\za)}$ in $\calk$.
\end{lemma}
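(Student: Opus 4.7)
The plan is to prove each of the three assertions in turn, with the key input being that $\za$ and $\zg$ have no intersections whatsoever (neither in the interior nor at endpoints), so Theorem \ref{theorem:object and map in derived category2}(2) forces all graded Hom-spaces between $\P_{(\za,f_\za)}$ and $\P_{(\zg,f_\zg)}$ to vanish.

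First I would verify that $\P_{(\za,f_\za)} \in \calz = \lp \cap \rp$. Since $\alpha \cap \gamma = \emptyset$, the oriented intersection numbers $|\Int_S(\za,\zg)|$ and $|\Int_S(\zg,\za)|$ are both zero. Applying Theorem \ref{theorem:object and map in derived category2}(2) (noting that $\za \neq \zg$ and they are not powers of a common primitive closed curve, so the identity-map convention does not enter), this yields
\[
\Hom^{\bullet}_{\cald^b(A)}(\P_{(\za,f_\za)},\P_{(\zg,f_\zg)})=0=\Hom^{\bullet}_{\cald^b(A)}(\P_{(\zg,f_\zg)},\P_{(\za,f_\za)}).
\]
Since $\calp = \add(\P_{(\zg,f_\zg)})$, this gives the stronger vanishing $\Hom_\calk(\P_{(\za,f_\za)},\calp[i]) = 0 = \Hom_\calk(\calp[i],\P_{(\za,f_\za)})$ for all $i \in \Z$, so in particular $\P_{(\za,f_\za)} \in \lp \cap \rp = \calz$.

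Next I would identify the shift $\langle 1 \rangle$ with $[1]$ on $\P_{(\za,f_\za)}$. By Theorem \ref{theorem:equivalence}(1), $\P_{(\za,f_\za)}\langle 1\rangle$ is defined via a triangle $\P_{(\za,f_\za)} \xrightarrow{\iota} P \to \P_{(\za,f_\za)}\langle 1\rangle \to \P_{(\za,f_\za)}[1]$ where $\iota$ is a left $\calp$-approximation. Because $\Hom_\calk(\P_{(\za,f_\za)},\calp) = 0$, the zero map $\P_{(\za,f_\za)} \to 0$ is a left $\calp$-approximation. The resulting triangle
\[
\P_{(\za,f_\za)} \longrightarrow 0 \longrightarrow \P_{(\za,f_\za)}[1] \xrightarrow{\mathrm{id}} \P_{(\za,f_\za)}[1]
\]
shows $\P_{(\za,f_\za)}\langle 1\rangle \cong \P_{(\za,f_\za)}[1]$ in $\calz_\calp$.

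Finally, for the orbits, note that by Theorem \ref{theorem:object and map in derived category2}(1) we have $\P_{(\za,f_\za)}[n] = \P_{(\za,f_\za[n])}$, and the arc $\za$ still does not intersect $\zg$, so the first two parts apply to every shift. Iterating the identification $\langle 1 \rangle = [1]$ shows that the $\langle 1\rangle$-orbit of $\P_{(\za,f_\za)}$ in $\calz_\calp$ is $\{\P_{(\za,f_\za[n])} : n \in \Z\}$, which is precisely its $[1]$-orbit in $\calk$. I expect no real obstacle here; the argument is essentially a direct application of the Hom-vanishing coming from the absence of intersections, combined with the explicit construction of the shift in the silting reduction.
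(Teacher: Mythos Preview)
Your proposal is correct and follows essentially the same approach as the paper: both use the vanishing of all graded Hom-spaces between $\P_{(\za,f_\za)}$ and $\P_{(\zg,f_\zg)}$ (from the absence of intersections and Theorem~\ref{theorem:object and map in derived category2}) to conclude that $\P_{(\za,f_\za)}\in\calz$ and that the zero map is a left $\calp$-approximation, whence $\langle 1\rangle=[1]$ on this object. Your version is in fact slightly more explicit than the paper's, which just says ``$\P_{(\za,f_\za)}\langle i\rangle=\P_{(\za,f_\za)}[i]$ by the definition of $\langle 1\rangle$'' without writing out the approximating triangle.
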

\begin{proof}
By Theorem \ref{theorem:object and map in derived category2} and Remark \ref{remark:object and map in derived category2}, if there are no intersections between $\za$ and $\zg$, then $$\Hom_{\calk}(\P_{(\za,f_\za)},\P_{(\zg,f_\zg)}[i])=0$$
	for any $i\in \mathbb{Z}$ and any grading $f_\za$. So $\P_{(\zg,f_\zg)}\in \calz_\calp$ and $\P_{(\za,f_\za)}\langle i\rangle=\P_{(\za,f_\za)}[i]$ by the definition of $\langle 1\rangle$ in Theorem \ref{theorem:equivalence}. Therefore the $\langle 1 \rangle$-orbit of $\P_{(\za,f_\za)}$ in ${\calz_\calp}$ coincides with the $[1]$-orbit of $\P_{(\za,f_\za)}$ in $\calk$.
\end{proof}

\begin{lemma}\label{lemma:reduction1}
For $\gpoint$-arcs $\za_1$ and $\za_2$ as in case I of Figure \ref{figure:possible positions between a and g}, let $f_{\za_1}$ and $f_{\za_2}$ be the gradings of $\za_1$ and $\za_2$ such that  $f_{\za_1}(s_{q_1}^{\za_1})=f_{\zg}(s_{q_1}^{\zg})$ and $f_{\za_2}(s_{q_2}^{\za_2})=f_{\zg}(s_{q_2}^{\zg})$ respectively.
Then
$$
\P_{(\za_1,f_{\za_1})}[i]\in \calz \Leftrightarrow i\leq 0,$$
$$ {\P_{(\za_2,f_{\za_2})}[i]}\in \calz\Leftrightarrow i\geq 0.
$$
Furthermore, the $\langle 1 \rangle$-orbit of $\P_{(\za_1,f_{\za_1})}$ in $\calz_\calp$ is
\begin{equation*}
	\cdots\s{\langle1\rangle}\longrightarrow\P_{(\za_1,f_{\za_1})}[-2]\s{\langle1\rangle}
	\longrightarrow \P_{(\za_1,f_{\za_1})}[-1]\s{\langle1\rangle}
	\longrightarrow \P_{(\za_1,f_{\za_1})}\s{\langle1\rangle}
	\longrightarrow
\end{equation*}
\begin{equation*}
	{\P_{(\za_2,f_{\za_2})}}\s{\langle1\rangle}
	\longrightarrow \P_{(\za_2,f_{\za_2})}[1]\s{\langle1\rangle}
	\longrightarrow\P_{(\za_2,f_{\za_2})}[2]\s{\langle1\rangle}
	\longrightarrow\cdots.
\end{equation*}
\end{lemma}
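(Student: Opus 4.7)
The plan is to reduce everything to the distinguished triangle on the surface formed by $\alpha_1$, $\gamma$, and $\alpha_2$, lift it to $\calk$ via Lemma~\ref{lemma:triangles as triangles}, and then read off both the membership conditions and the orbit from that triangle.

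First, in Case~I of Figure~\ref{figure:possible positions between a and g} the arc $\alpha_2$ is precisely the smoothing of $\alpha_1$ and $\gamma$ at $q_1$, so $\{\alpha_1,\gamma,\alpha_2\}$ forms a distinguished triangle on the surface in the sense of Proposition-Definition~\ref{prop-def-distinguished triangle on the surface}. The given grading conditions $f_{\alpha_1}(s^{\alpha_1}_{q_1})=f_\gamma(s^\gamma_{q_1})$ and $f_\gamma(s^\gamma_{q_2})=f_{\alpha_2}(s^{\alpha_2}_{q_2})$ match the hypotheses of Lemma~\ref{lemma:triangles as triangles}, which therefore yields a distinguished triangle in $\calk$
\[
\P_{(\alpha_1,f_{\alpha_1})}\xrightarrow{a}\P_{(\gamma,f_\gamma)}\xrightarrow{b}\P_{(\alpha_2,f_{\alpha_2})}\xrightarrow{c}\P_{(\alpha_1,f_{\alpha_1})}[1].
\]

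Next I compute the relevant graded Hom-spaces. By Case~I of Lemma~\ref{lemma:position}, $\alpha_1$ meets $\gamma$ only at the boundary point $q_1$ and $\alpha_2$ meets $\gamma$ only at $q_2$; moreover, from the anticlockwise ordering visible in Figure~\ref{figure:possible positions between a and g}, $q_1\in\Int(\alpha_1,\gamma)\setminus\Int(\gamma,\alpha_1)$ and $q_2\in\Int(\gamma,\alpha_2)\setminus\Int(\alpha_2,\gamma)$. Theorem~\ref{theorem:object and map in derived category2}(2), combined with the grading hypotheses, therefore gives
\[
\Hom(\P_{\alpha_1},\P_\gamma[j])=\delta_{j,0}\,k,\quad \Hom(\P_\gamma,\P_{\alpha_1}[j])=0,
\]
\[
\Hom(\P_\gamma,\P_{\alpha_2}[j])=\delta_{j,0}\,k,\quad \Hom(\P_{\alpha_2},\P_\gamma[j])=0,
\]
the nonzero generators being $a$ and $b$. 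Feeding these vanishings into the definitions of $\lp$ and $\rp$ with $\calp=\add(\P_\gamma)$, the $\rp$-condition for $\P_{\alpha_1}[i]$ is automatic, while the $\lp$-condition demands $\Hom(\P_{\alpha_1},\P_\gamma[k])=0$ for every $k>-i$, which holds exactly when $i\le 0$; dually the $\lp$-condition for $\P_{\alpha_2}[i]$ is automatic and the $\rp$-condition forces $i\ge 0$. This establishes both displayed equivalences.

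Finally, I compute the $\langle 1\rangle$-orbit using Theorem~\ref{theorem:equivalence}(1). For $X=\P_{\alpha_1}[-n]$ with $n\ge 1$, the space $\Hom(X,\P_\gamma)=\Hom(\P_{\alpha_1},\P_\gamma[n])$ vanishes, so the minimal left $\calp$-approximation of $X$ is zero and the defining triangle reduces to $X\to 0\to X[1]\xrightarrow{\mathrm{id}}X[1]$, giving $X\langle 1\rangle=X[1]=\P_{\alpha_1}[-n+1]$. For $X=\P_{\alpha_1}$ the approximation is the generator $a\colon\P_{\alpha_1}\to\P_\gamma$, and the triangle displayed above identifies the cone as $\P_{\alpha_2}$, so $\P_{\alpha_1}\langle 1\rangle=\P_{\alpha_2}$. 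For $X=\P_{\alpha_2}[n]$ with $n\ge 0$ the Hom into $\calp$ vanishes again, yielding $X\langle 1\rangle=\P_{\alpha_2}[n+1]$. Stringing these steps together produces the displayed chain. The principal bookkeeping task is ensuring there are no additional oriented intersections between $\alpha_i$ and $\gamma$ beyond the stated boundary ones; this is precisely what Case~I of Lemma~\ref{lemma:position} delivers.
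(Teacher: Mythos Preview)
Your proof is correct and follows essentially the same approach as the paper: both use the unique boundary intersection to compute the graded Hom-spaces with $\P_\gamma$, deduce the $\calz$-membership conditions directly from the definitions of $\lp$ and $\rp$, and identify $\P_{(\alpha_1,f_{\alpha_1})}\langle 1\rangle=\P_{(\alpha_2,f_{\alpha_2})}$ via the distinguished triangle of Lemma~\ref{lemma:triangles as triangles} while reading off $\langle 1\rangle=[1]$ elsewhere from the Hom-vanishing. Your write-up is in fact somewhat more explicit than the paper's, spelling out the oriented-intersection bookkeeping and invoking Lemma~\ref{lemma:position} to justify that $q_1$ (resp.\ $q_2$) is the only intersection of $\alpha_1$ (resp.\ $\alpha_2$) with $\gamma$.
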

\begin{proof}
Let $\za_1$ be an $\gpoint$-arc as in case I of Figure \ref{figure:possible positions between a and g}. Since $f_{\za_1}(s_{q_1}^{\za_1})=f_{\zg}(s_{q_1}^{\zg})$, the intersection of $\za_1$ and $\zg$ at $q_1$ gives rise to a map $a: \P_{(\za_1,f_{\za_1})} \longrightarrow \P_{(\zg,f_{\zg})}$ in $\calk$.
	Since $q_1$ is the unique intersection between $\za_1$ and $\zg$, we have ${\P_{(\za_1,f_{\za_1})}[i]}\in \rp$ for any $i \in \mathbb{Z}$, and ${\P_{(\za_1,f_{\za_1})}[i]}\in \lp$ if and only if $i \leq 0$. Therefore
	$${\P_{(\za_1,f_{\za_1})}[i]}\in \calz\Leftrightarrow i\leq 0. $$
	
	On the one hand, note that the map $a: \P_{(\za_1,f_{\za_1})} \longrightarrow \P_{(\zg,f_{\zg})}$ is a left-$\calp$-approximation of $\P_{(\za_1,f_{\za})}$,  and by Lemma \ref{lemma:triangles as triangles} (3), it determines a distinguished triangle
	\begin{equation*}
		\P_{(\za_1,f_{\za_1})}\s{a}\longrightarrow \P_{(\zg,f_\zg)}\s{}\longrightarrow \P_{(\za_2,f_{\za_2})}\s{}\longrightarrow {\P_{(\za_1,f_{\za_1})}[1]}
	\end{equation*}
	for the grading $f_{\za_2}$ of $\za_2$ such that $f_{\za_2}(s_{q_2}^{\za_2})=f_{\zg}(s_{q_2}^{\zg})$.
	So we have $\P_{(\za_1,f_{\za_1})}\langle1\rangle=\P_{(\za_2,f_{\za_2})}$ in ${\calz_\calp}$.
	
	On the other hand, by a similar argument, we have
	$${\P_{(\za_2,f_{\za_2})}[i]}\in \calz\Leftrightarrow i\geq 0.$$
	Moreover, note that for any $i<0$, we have $\Hom_{\calk}(\P_{(\za_1,f_\za)},\P_{(\zg,f_\zg)}[i])=0$, and for any $i>0$, we have $\Hom_{\calk}(\P_{(\za_2,f_{\za_2})},\P_{(\zg,f_\zg)}[i])=0$.
	Therefore $\P_{(\za_1,f_{\za_1})}[i]\langle1\rangle=\P_{(\za_1,f_{\za_1})}[i+1]$ for any $i<0$, and
	$\P_{(\za_2,f_{\za_2})}[i]\langle1\rangle=\P_{(\za_2,f_{\za_2})}[i+1]$ for any $i>0$.
	
	To sum up, these objects are connected by the shift functor $\langle1\rangle$ as in the statement. Furthermore the orbit uniquely corresponds to the $\zg$-smoothing equivalence class $[\za_1]_\zg=[\za_2]_\zg=\{\za_1,\za_2\}$ which are identified in $S_\zg$ by Proposition \ref{proposition:identify arcs} (3).
\end{proof}
\begin{lemma}\label{lemma:reduction2}
For $\za_1$ as in case II of Figure \ref{figure:possible positions between a and g}, we denote the copy of  $q_i$ on the right by $q^\prime_i$. Let $f_{\za_1}$ be the grading of $\za_1$ such that  $f_{\za_1}(s_{q_1}^{\za_1})=f_{\zg}(s_{q_1}^{\zg})$.
Then $\P_{(\za_1,f_{\za_1})}\in \calz$ and
\begin{itemize}
	\item  if $f_{{\za_1}}(s^{{\za_1}}_{q^\prime_2})=f_{\zg}(s^{\zg}_{q^\prime_2})$, then the $\langle 1 \rangle$-orbit of $\P_{(\za_1,f_{\za_1})}$ is
	\begin{equation*}
		\cdots\s{\langle1\rangle}\longrightarrow\P_{({\za_1},f_{\za_1})}[-2]\s{\langle1\rangle}
		\longrightarrow \P_{({\za_1},f_{\za_1})}[-1]\s{\langle1\rangle}
		\longrightarrow \P_{({\za_1},f_{{\za_1}})}\s{\langle1\rangle}
		\longrightarrow
	\end{equation*}
	\begin{equation*}
		{\P_{(\za_4,f_{\za_4})}}\s{\langle1\rangle}
		\longrightarrow \P_{(\za_4,f_{\za_4})}[1]\s{\langle1\rangle}
		\longrightarrow\P_{(\za_4,f_{\za_4})}[2]\s{\langle1\rangle}
		\longrightarrow\cdots.
	\end{equation*}
	In particular, $\P_{(\za_2,f_{\za_2})}\notin \calz$ and $\P_{(\za_3,f_{\za_3})}\notin \calz$ for any gradings $f_{\za_2}$ and $f_{\za_3}$ of $\za_2$ and $\za_3$ respectively.\\
	\item  if $f_{{\za_1}}(s^{{\za_1}}_{q^\prime_2})- f_{\zg}(s^{\zg}_{q^\prime_2})=m \geq 1$, then the $\langle 1 \rangle$-orbit of $\P_{(\za_1,f_{\za_1})}$ is
	\begin{equation*}
		\cdots\s{\langle1\rangle}\longrightarrow\P_{({\za_1},f_{\za_1})}[-1]\s{\langle1\rangle}
		\longrightarrow \P_{({\za_1},f_{\za_1})}\s{\langle1\rangle}
		\longrightarrow \P_{(\za_2,f_{\za_2})}\s{\langle1\rangle}
		\longrightarrow \P_{(\za_2,f_{\za_2})}[1]\s{\langle1\rangle}
		\longrightarrow\cdots
		\s{\langle1\rangle}\longrightarrow
	\end{equation*}
	\begin{equation*}
		\P_{(\za_2,f_{\za_2})}[m-1]\s{\langle1\rangle}
		\longrightarrow \P_{(\za_4,f_{\za_4})}[m]\s{\langle1\rangle}
		\longrightarrow \P_{(\za_4,f_{\za_4})}[m+1]\s{\langle1\rangle}
		\longrightarrow\cdots.
	\end{equation*}
	In particular, $\P_{(\za_3,f_{\za_3})}\notin \calz$ for any grading $f_{\za_3}$ of $\za_3$.\\
	\item  if $f_{{\za_1}}(s^{{\za_1}}_{q^\prime_2})- f_{\zg}(s^{\zg}_{q^\prime_2})=m \leq -1$, then the $\langle 1 \rangle$-orbit of $\P_{(\za_1,f_{\za_1})}$ is
	\begin{equation*}
		\cdots\s{\langle1\rangle}\longrightarrow\P_{({\za_1},f_{\za_1})}[m-1]\s{\langle1\rangle}
		\longrightarrow \P_{({\za_1},f_{\za_1})}[m]\s{\langle1\rangle}
		\longrightarrow \P_{(\za_3,f_{\za_3})}[m+1]\s{\langle1\rangle}
		\longrightarrow
	\end{equation*}
	\begin{equation*}
		\cdots
		\s{\langle1\rangle}\longrightarrow\P_{(\za_3,f_{\za_3})}
		\s{\langle1\rangle}\longrightarrow \P_{(\za_4,f_{\za_4})}\s{\langle1\rangle}
		\longrightarrow \P_{(\za_4,f_{\za_4})}[1]\s{\langle1\rangle}
		\longrightarrow\cdots.
	\end{equation*}
	In particular, $\P_{(\za_2,f_{\za_2})}\notin \calz$ for any grading $f_{\za_2}$ of $\za_2$.
\end{itemize}
\end{lemma}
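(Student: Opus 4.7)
My plan is three-fold: (i) compute the oriented intersections of $\za_1$ and $\gamma$ to read off, via Theorem~\ref{theorem:object and map in derived category2}(2), the relevant morphism spaces, and deduce which shifts of $\P_{(\za_1, f_{\za_1})}$ lie in $\calz = \lp \cap \rp$; (ii) identify the shift functor $\langle 1 \rangle$ on the initial object of each orbit using the defining triangle of Theorem~\ref{theorem:equivalence} together with the distinguished triangles on the surface from Lemma~\ref{lemma:triangles as triangles} and Proposition~\ref{proposition:flip and push-out}; and (iii) iterate these distinguished triangles to trace out the full orbit.

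For step (i), the arc $\za_1$ in Case~II meets $\gamma$ only at the boundary endpoints $q_1$ and $q'_2$, and at each of these endpoints $\gamma$ immediately follows $\za_1$ in the anti-clockwise order, so both intersections lie in $\Int_S(\za_1, \gamma)$ while $\Int_S(\gamma, \za_1) = \emptyset$. By the grading hypothesis and the definition of $m$ this produces one morphism $\P_{(\za_1, f_{\za_1})} \to \P_{(\zg, f_\zg)}$ of degree $0$ from $q_1$ and one of degree $-m$ from $q'_2$, and no morphisms in the opposite direction. In the cases $m = 0$ and $m \geq 1$ both degrees are non-positive, so $\P_{(\za_1, f_{\za_1})} \in \calz$; in the case $m \leq -1$ the degree $-m$ is strictly positive, and one must instead take the shift $\P_{(\za_1, f_{\za_1})}[m]$ as the first member of the orbit that lies in $\calz$.

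For step (ii), the left $\calp$-approximation of the initial object depends on $m$. When $m = 0$ both morphisms to $\P_\zg$ lie in degree $0$, so the minimal left $\calp$-approximation is the map $(a_1, a_2): \P_{(\za_1, f_{\za_1})} \to \P_{(\zg, f_\zg)} \oplus \P_{(\zg, f_\zg)}$; applying Proposition~\ref{proposition:flip and push-out}(3)(4) to the quadrilateral of Case~II whose diagonals are $\za_1, \za_4$ and whose two pairs of opposite sides are $\{\za_2, \za_3\}$ and two copies of $\zg$, the cone is identified with $\P_{(\za_4, f_{\za_4})}$, so $\P_{(\za_1, f_{\za_1})}\langle 1 \rangle = \P_{(\za_4, f_{\za_4})}$. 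When $m \geq 1$ the only degree-zero map is $a_1$, so the distinguished triangle on the surface $\triangle(\za_1, \zg, \za_2)$ from smoothing at $q_1$, together with Lemma~\ref{lemma:triangles as triangles}, yields $\P_{(\za_1, f_{\za_1})}\langle 1 \rangle = \P_{(\za_2, f_{\za_2})}$. When $m \leq -1$ one starts from $\P_{(\za_1, f_{\za_1})}[m]$, whose only degree-zero map to $\P_\zg$ is a shift of $a_2$, and the distinguished triangle $\triangle(\za_1, \zg, \za_3)$ on the surface from smoothing at $q'_2$ gives $\P_{(\za_1, f_{\za_1})}[m]\langle 1 \rangle = \P_{(\za_3, f_{\za_3})}[m+1]$.

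For step (iii) I iterate. In the case $m \geq 1$ the same type of intersection computation shows that $\P_{(\za_2, f_{\za_2})}$ has a unique morphism to $\P_\zg$ of each degree in $\{0, \ldots, m-1\}$, whose minimal approximation produces the cone $\P_{(\za_2, f_{\za_2})}[k+1]$, so $\langle 1 \rangle$ agrees with $[1]$ on $\P_{(\za_2, f_{\za_2})}[k]$ for $0 \leq k \leq m-2$; the final jump $\P_{(\za_2, f_{\za_2})}[m-1] \langle 1 \rangle = \P_{(\za_4, f_{\za_4})}[m]$ comes from the distinguished triangle $\triangle(\za_2, \zg, \za_4)$ on the surface from smoothing at $q'_2$, and thereafter $\za_4$ admits no morphism to $\P_\zg$ at all, so $\langle 1 \rangle = [1]$ on all subsequent $\za_4$-shifts. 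The case $m \leq -1$ is treated dually, with $\za_3$ in place of $\za_2$ and the final jump arising from $\triangle(\za_3, \zg, \za_4)$ via smoothing at $q'_1$. The principal obstacle is the careful tracking of gradings through each of these successive distinguished triangles, which is controlled by the grading-compatibility formula of Lemma~\ref{lemma:equalities} (this is what forces the precise integer shifts appearing in the statement of the orbit).
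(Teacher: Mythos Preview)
Your approach mirrors the paper's: compute the boundary intersections of each $\za_i$ with $\zg$, read off from the gradings exactly which shifts lie in $\calz$, and identify $\langle 1\rangle$ via the defining approximation triangle using Lemma~\ref{lemma:triangles as triangles} and Proposition~\ref{proposition:flip and push-out}.

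There are, however, two points where your write-up goes astray. First, your step~(iii) claim that ``$\P_{(\za_2,f_{\za_2})}$ has a unique morphism to $\P_\zg$ of each degree in $\{0,\ldots,m-1\}$, whose minimal approximation produces the cone $\P_{(\za_2,f_{\za_2})}[k+1]$'' is garbled. The arc $\za_2$ meets $\zg$ at \emph{both} $q_2$ and $q'_2$, but only $q'_2$ lies in $\Int_S(\za_2,\zg)$ (yielding a single map $\P_{\za_2}\to\P_\zg$ in one fixed degree), while $q_2$ lies in $\Int_S(\zg,\za_2)$ (a map in the opposite direction). It is the combination of these two conditions that gives $\P_{\za_2}[i]\in\calz\Leftrightarrow 0\le i\le m-1$, as the paper computes directly. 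The correct reason $\langle 1\rangle=[1]$ on $\P_{\za_2}[k]$ for $0\le k\le m-2$ is that the left $\calp$-approximation is \emph{zero} there (no degree-$0$ map to $\P_\zg$), not that the cone of a nonzero approximation happens to be a shift.

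Second, the ``in particular'' assertions that $\P_{(\za_3,g)}\notin\calz$ (resp.\ $\P_{(\za_2,g)}\notin\calz$, or both when $m=0$) for \emph{every} grading $g$ require a direct verification. The paper does this by computing, for instance in subcase $m\ge 1$, that $f_{\za_3}(s^{\za_3}_{q'_1})-f_{\za_3}(s^{\za_3}_{q_1})=m+1>0$, so the $\lp$ condition at $q_1$ and the $\rp$ condition at $q'_1$ are incompatible. This does not follow from merely exhibiting the orbit of $\P_{(\za_1,f_{\za_1})}$: at this point in the argument you have not yet shown that every object of $\calz$ supported on the smoothing class $[\za_1]_\zg$ must appear in that orbit (that is the content of Theorem~\ref{theorem:reduction0}, which depends on the present lemma).
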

\begin{proof}
Let $\za_1$ be an $\gpoint$-arc in case II of Figure \ref{figure:possible positions between a and g}. Since $f_{\za_1}(s_{q_1}^{\za_1})=f_{\zg}(s_{q_1}^{\zg})$, we  have a map $a_1: \P_{({\za_1},f_{{\za_1}})} \longrightarrow \P_{(\zg,f_{\zg})}$, which gives rise to  the distinguished triangle
	\begin{equation*}
		\P_{({\za_1},f_{\za_1})}\s{a_1}\longrightarrow \P_{(\zg,f_\zg)}\s{}\longrightarrow \P_{(\za_2,f_{\za_2})}\s{}\longrightarrow {\P_{({\za_1},f_{\za_1})}[1]},
	\end{equation*}
	where $f_{\za_2}$ is the grading of $\za_2$ such that $f_{\zg}(s^{\zg}_{q_2})=f_{\za_2}(s^{\za_2}_{q_2})$ and $f_{\za_2}(s^{\za_2}_{q^\prime_2})=f_{{\za_1}}(s^{{\za_1}}_{q^\prime_2})-1$.
	Now there are three subcases.
	
	Subcase one: if $f_{{\za_1}}(s^{{\za_1}}_{q^\prime_2})=f_{\zg}(s^{\zg}_{q^\prime_2})$, then there is a triangle
	\begin{equation*}
		\P_{({\za_1},f_{\za_1})}\s{a_2}\longrightarrow \P_{(\zg,f_\zg)}\s{}\longrightarrow \P_{(\za_3,f_{\za_3})}\s{}\longrightarrow {\P_{({\za_1},f_{\za_1})}[1]},
	\end{equation*}
	where $f_{\za_3}$ is the grading of $\za_3$ such that  $f_{\zg}(s^{\zg}_{q^\prime_1})=f_{\za_3}(s^{\za_3}_{q^\prime_1})$ and $f_{\za_3}(s^{\za_3}_{q_1})=f_{{\za_1}}(s^{{\za_1}}_{q_1})-1$.
	Suppose $\P_{(\za_2,g_{\za_2})}\in \calz$ for some grading $g_{\za_2}$. Then we have $g_{\za_2}(s^{\za_2}_{q_2})\leq f_{\zg}(s^{\zg}_{q_2})$ by $\P_{(\za_2,g_{\za_2})}\in \rp$, and $g_{\za_2}(s^{\za_2}_{q^\prime_2})\geq f_{\zg}(s^{\zg}_{q^\prime_2})=f_{\zg}(s^{\zg}_{q_2})$ by $\P_{(\za_2,g_{\za_2})}\in \rp$. So
	$$g_{\za_2}(s^{\za_2}_{q_2})\leq g_{\za_2}(s^{\za_2}_{q^\prime_2}).$$
	On the other hand, we have $$f_{\za_2}(s^{\za_2}_{q_2})=f_{\zg}(s^{\zg}_{q_2})=f_{\zg}(s^{\zg}_{q^\prime_2})=f_{{\za_1}}(s^{{\za_1}}_{q^\prime_2})
	=f_{\za_2}(s^{\za_2}_{q^\prime_2})+1.$$
	So
	$$g_{\za_2}(s^{\za_2}_{q_2})=g_{\za_2}(s^{\za_2}_{q^\prime_2})+1,$$
	a contradiction. Therefore  $\P_{(\za_2,g_{\za_2})}\notin \calz$ for any grading $g_{\za_2}$, and similarly $\P_{(\za_3,g_{\za_3})}\notin \calz$ for any grading $g_{\za_3}$.
	
	Noticing that by Proposition \ref{proposition:flip and push-out} (3),
	\begin{equation*}
		\P_{({\za_1},f_{\za_1})}\s{(a_1,a_2)}\longrightarrow \P_{(\zg,f_{\zg})}\oplus \P_{(\zg,f_{\zg})}
		\longrightarrow \P_{(\za_4,f_{\za_4})}
		\longrightarrow {\P_{({\za_1},f_{\za_1})}[1]}
	\end{equation*}
	is a distinguished triangle for some proper grading $f_{\za_4}$ of $\za_4$, where $(a_1,a_2)$ is a left $\P_{(\zg,f_{\zg})}$-approximation. So $\P_{({\za_1},f_{\za_1})}\langle1\rangle=\P_{(\za_4,f_{\za_4})}$.
	On the other hand, $\P_{(\za_4,f_{\za_4})}[i]\in \calz$ if and only if $i\geq 0$, and $\P_{({\za_1},f_{{\za_1}})}[i]\in \calz$ if and only if $i\leq 0$. Finally we have the wanted $\langle1\rangle$-orbit, which uniquely corresponds with the $\gpoint$-arc $\za_1=\za_4$ in $S_\zg$.

	Subcase two: if $f_{{\za_1}}(s^{{\za_1}}_{q^\prime_2})\geq f_{\zg}(s^{\zg}_{q^\prime_2})+1$, assume that $f_{{\za_1}}(s^{{\za_1}}_{q^\prime_2})- f_{\zg}(s^{\zg}_{q^\prime_2})=m \geq 1$.
	On the one hand, for $i\in \mathbb{Z}$,
	$$\P_{(\za_2,f_{\za_2})}[i]\in \rp \Longleftrightarrow f_{\za_2}(s^{\za_2}_{q_2})\leq f_{\zg}(s^{\zg}_{q_2})+i,$$
	$$\P_{(\za_2,f_{\za_2})}[i]\in \lp \Longleftrightarrow f_{\za_2}(s^{\za_2}_{q^\prime_2})\geq f_{\zg}(s^{\zg}_{q^\prime_2})+i.$$
	On the other hand, we have
	$$f_{\za_2}(s^{\za_2}_{q_2})=f_{\zg}(s^{\zg}_{q_2}),$$
	$$f_{\za_2}(s^{\za_2}_{q^\prime_2})= f_{{\za_1}}(s^{{\za_1}}_{q^\prime_2})-1=f_{\zg}(s^{\zg}_{q^\prime_2})+m-1.$$
	So $\P_{(\za_2,f_{\za_2})}[i]\in \calz$ if and only if $0\leq i \leq m-1$.
	
	Note that there is a triangle
	\begin{equation*}
		\P_{({\za_1},f_{\za_1})}\s{a_2}\longrightarrow \P_{(\zg,f_\zg)}[-m]\s{}\longrightarrow \P_{(\za_3,f_{\za_3})}\s{}\longrightarrow {\P_{({\za_1},f_{\za_1})}[1]},
	\end{equation*}
	where $f_{\za_3}$ is a grading such that $f_{\za_3}(s^{\za_3}_{q^\prime_1})=f_{\zg}(s^{\zg}_{q^\prime_1})+m$ and $f_{\za_3}(s^{\za_3}_{q_1})=f_{{\za_1}}(s^{{\za_1}}_{q_1})-1$.
	Then on the one hand, for $i\in \mathbb{Z}$,
	$$\P_{(\za_3,f_{\za_3})}[i]\in \rp \Longleftrightarrow f_{\za_3}(s^{\za_3}_{q^\prime_1})\leq f_{\zg}(s^{\zg}_{q^\prime_1})+i,$$
	$$\P_{(\za_3,f_{\za_3})}[i]\in \lp \Longleftrightarrow f_{\za_3}(s^{\za_3}_{q_1})\geq f_{\zg}(s^{\zg}_{q_1})+i.$$
	So if $\P_{(\za_3,f_{\za_3})}[i]\in \calz$, then
	$$f_{\za_3}(s^{\za_3}_{q^\prime_1})\leq f_{\za_3}(s^{\za_3}_{q_1}).$$
	On the other hand,
	$$f_{\za_3}(s^{\za_3}_{q_1})=f_{{\za_1}}(s^{{\za_1}}_{q_1})-1=f_{\zg}(s^{\zg}_{q_1})-1=f_{\za_3}(s^{\za_3}_{q^\prime_1})-m-1.$$
	So
	$$f_{\za_3}(s^{\za_3}_{q^\prime_1})> f_{\za_3}(s^{\za_3}_{q_1}),$$
	since $m\geq 1.$ Therefore we obtain a contradiction, and $\P_{(\za_3,g_{\za_3})}\notin \calz$ for any grading $g_{\za_3}$.
	Similarly, we may prove that
	$$\P_{({\za_1},f_{{\za_1}})}[i]\in \calz \Longleftrightarrow i\leq 0,$$
	$$\P_{(\za_4,f_{\za_4})}[i]\in \calz \Longleftrightarrow i\geq m.$$
	
Finally we have the wanted $\langle1\rangle$-orbit, which uniquely corresponds with the $\gpoint$-arc $\za_1=\za_2=\za_4$ over $S_\zg$.
	
	Subcase three: $f_{{\za_1}}(s^{{\za_1}}_{q^\prime_2})\leq f_{\zg}(s^{\zg}_{q^\prime_2})-1$, this is the dual case of Subcase two.
\end{proof}

\begin{lemma}\label{lemma:reduction3}
For $\za_1$ as in case III of Figure \ref{figure:possible positions between a and g}, we denote by the copy of $q_i$ on the right by $q^\prime_i$. Let $f_{\za_1}$ be the grading of $\za_1$ such that  $f_{\za_1}(s_{q_1}^{\za_1})=f_{\zg}(s_{q_1}^{\zg})$.
Then $\P_{(\za_1,f_{\za_1})}\in \calz$ and
\begin{itemize}
	\item  if $f_{{\za_1}}(s^{{\za_1}}_{q^\prime_1})=f_{\zg}(s^{\zg}_{q^\prime_1})$, then the $\langle 1 \rangle$-orbit of $\P_{(\za_1,f_{\za_1})}$ is
	\begin{equation*}
		\cdots\s{\langle1\rangle}\longrightarrow\P_{({\za_1},f_{\za_1})}[-2]\s{\langle1\rangle}
		\longrightarrow \P_{({\za_1},f_{\za_1})}[-1]\s{\langle1\rangle}
		\longrightarrow \P_{({\za_1},f_{{\za_1}})}\s{\langle1\rangle}
		\longrightarrow
	\end{equation*}
	\begin{equation*}
		{\P_{(\za_4,f_{\za_4})}}\s{\langle1\rangle}
		\longrightarrow \P_{(\za_4,f_{\za_4})}[1]\s{\langle1\rangle}
		\longrightarrow\P_{(\za_4,f_{\za_4})}[2]\s{\langle1\rangle}
		\longrightarrow\cdots.
	\end{equation*}
	In particular, $\P_{(\za_2,f_{\za_2})}\notin \calz$ and $\P_{(\za_3,f_{\za_3})}\notin \calz$ for any gradings $f_{\za_2}$ and $f_{\za_3}$ of $\za_2$ and $\za_3$ respectively.\\
	\item  if $f_{{\za_1}}(s^{{\za_1}}_{q^\prime_1})- f_{\zg}(s^{\zg}_{q^\prime_1})=m \geq 1$, then the $\langle 1 \rangle$-orbit of $\P_{(\za_1,f_{\za_1})}$ is
	\begin{equation*}
		\cdots\s{\langle1\rangle}\longrightarrow\P_{({\za_1},f_{\za_1})}[-1]\s{\langle1\rangle}
		\longrightarrow \P_{({\za_1},f_{\za_1})}\s{\langle1\rangle}
		\longrightarrow \P_{(\za_2,f_{\za_2})}\s{\langle1\rangle}
		\longrightarrow \P_{(\za_2,f_{\za_2})}[1]\s{\langle1\rangle}
		\longrightarrow\cdots
		\s{\langle1\rangle}\longrightarrow
	\end{equation*}
	\begin{equation*}
		\P_{(\za_2,f_{\za_2})}[m-1]\s{\langle1\rangle}
		\longrightarrow \P_{(\za_4,f_{\za_4})}[m]\s{\langle1\rangle}
		\longrightarrow \P_{(\za_4,f_{\za_4})}[m+1]\s{\langle1\rangle}
		\longrightarrow\cdots.
	\end{equation*}In particular, $\P_{(\za_3,f_{\za_3})}\notin \calz$ for any grading $f_{\za_3}$ of $\za_3$.\\
	\item  if $f_{{\za_1}}(s^{{\za_1}}_{q^\prime_1})- f_{\zg}(s^{\zg}_{q^\prime_1})=m \leq -1$, then the $\langle 1 \rangle$-orbit of $\P_{(\za_1,f_{\za_1})}$ is
	\begin{equation*}
		\cdots\s{\langle1\rangle}\longrightarrow\P_{({\za_1},f_{\za_1})}[m-1]\s{\langle1\rangle}
		\longrightarrow \P_{({\za_1},f_{\za_1})}[m]\s{\langle1\rangle}
		\longrightarrow \P_{(\za_3,f_{\za_3})}[m+1]\s{\langle1\rangle}
		\longrightarrow
	\end{equation*}
	\begin{equation*}
		\cdots
		\s{\langle1\rangle}\longrightarrow\P_{(\za_3,f_{\za_3})}
		\s{\langle1\rangle}\longrightarrow \P_{(\za_4,f_{\za_4})}\s{\langle1\rangle}
		\longrightarrow \P_{(\za_4,f_{\za_4})}[1]\s{\langle1\rangle}
		\longrightarrow\cdots.
	\end{equation*}
	In particular, $\P_{(\za_2,f_{\za_2})}\notin \calz$ for any grading $f_{\za_2}$ of $\za_2$.\\
\end{itemize}
\end{lemma}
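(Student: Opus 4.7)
The plan is to follow the same three–step template used in the proof of Lemma~\ref{lemma:reduction2}, adapted to the identifications appropriate to case III. As before, the hypothesis $f_{\za_1}(s^{\za_1}_{q_1}) = f_{\zg}(s^{\zg}_{q_1})$ yields a morphism $a_1 \colon \P_{(\za_1,f_{\za_1})} \to \P_{(\zg,f_\zg)}$ arising from the boundary intersection at $q_1$, and Lemma~\ref{lemma:triangles as triangles} applied to the distinguished triangle on the surface $\triangle(\za_1,\za_2,\zg)$ produces a unique compatible grading $f_{\za_2}$ of $\za_2$ fitting into a distinguished triangle
\begin{equation*}
\P_{(\za_1,f_{\za_1})} \xrightarrow{a_1} \P_{(\zg,f_\zg)} \longrightarrow \P_{(\za_2,f_{\za_2})} \longrightarrow \P_{(\za_1,f_{\za_1})}[1].
\end{equation*}
Whenever the grading comparison at $q'_1$ also yields a map $a_2 \colon \P_{(\za_1,f_{\za_1})} \to \P_{(\zg,f_\zg)}[-m]$, an analogous application to $\triangle(\za_1,\za_3,\zg)$ produces a distinguished triangle whose third term is a shift of $\P_{(\za_3,f_{\za_3})}$.

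Next I would split the argument into the three subcases $m = 0$, $m \geq 1$, $m \leq -1$ with $m := f_{\za_1}(s^{\za_1}_{q'_1}) - f_{\zg}(s^{\zg}_{q'_1})$. In each case one translates the defining conditions $\P_{(\beta,f_\beta)}[j] \in \lp$ and $\P_{(\beta,f_\beta)}[j] \in \rp$ into explicit inequalities among the grading values at each intersection of $\beta$ with $\zg$. A routine check then determines precisely which shifts of $\za_1,\za_2,\za_3,\za_4$ (equipped with the gradings inherited from the triangles above) lie in $\calz$: in subcase $m=0$ only $\za_1$ and $\za_4$ survive, while subcase $m \geq 1$ contributes $m$ shifts of $\za_2$ and subcase $m \leq -1$ contributes $|m|$ shifts of $\za_3$.

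To read off the $\langle 1 \rangle$-orbit I would use the definition of $\langle 1\rangle$ via left $\calp$-approximations. In the first subcase, Proposition~\ref{proposition:flip and push-out} identifies the homotopy push-out of $a_1$ and $a_2$ with $\P_{(\za_4,f_{\za_4})}$, so $\P_{(\za_1,f_{\za_1})}\langle 1\rangle = \P_{(\za_4,f_{\za_4})}$; in the other subcases the left $\calp$-approximation factors through a single copy of $\P_{(\zg,f_\zg)}$ at the smaller of the two available grading levels, so the first shift is $\P_{(\za_2,f_{\za_2})}$ (respectively $\P_{(\za_3,f_{\za_3})}[m+1]$), after which one may apply $[1]$ repeatedly inside $\calz$ until the second approximation map kicks in. At each intermediate stage where $\Hom_\calk(\P_{(\beta,f_\beta)},\P_{(\zg,f_\zg)}[*]) = 0$, the shift $\langle 1\rangle$ coincides with $[1]$, which produces the stated stretches in each of the three orbits.

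The main obstacle is bookkeeping: in case III the gluing pattern around $\zg$ swaps $q'_1$ and $q'_2$ relative to case II, so every occurrence of $s^{\bullet}_{q'_2}$ in the Lemma~\ref{lemma:reduction2} argument must be replaced by $s^{\bullet}_{q'_1}$ (and vice versa). The compatible gradings on $\za_2$, $\za_3$, $\za_4$ needed to make these substitutions rigorous are supplied by Lemma~\ref{lemma:equalities} applied to each of the relevant distinguished triangles on the surface, after which the inequalities defining $\lp \cap \rp$ and the orbit computation proceed in complete analogy with Lemma~\ref{lemma:reduction2}. Consistency with the identification of arcs in the cut surface (Proposition~\ref{proposition:identify arcs}) provides a useful sanity check: the single $\langle 1\rangle$-orbit produced in each subcase corresponds to the single $\zg$-smoothing equivalence class $[\za_1]_\zg = \{\za_1,\za_2,\za_3,\za_4\}$ of case III.
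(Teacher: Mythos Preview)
Your proposal is correct and follows exactly the approach the paper takes: the paper's proof of Lemma~\ref{lemma:reduction3} consists of the single sentence ``The proof is similar to the proof of Lemma~\ref{lemma:reduction2},'' and what you have written is precisely a careful outline of that adaptation, including the key observation that the only change is the relabelling $q'_2 \leftrightarrow q'_1$ forced by the different gluing pattern in case~III.
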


\begin{proof}
The proof is similar to the proof of Lemma \ref{lemma:reduction2}.
\end{proof}

Combining  Lemmas \ref{lemma:reduction0},
 \ref{lemma:reduction1},  \ref{lemma:reduction2},
  \ref{lemma:reduction3} and Proposition \ref{proposition:identify arcs},	we obtain the following.

\begin{theorem}\label{theorem:reduction0}
Let $\P_{(\za,f_\za)}$ and $\P_{(\zb,f_\zb)}$ be two indecomposable objects in $\calz_\calp$ corresponding to  $\gpoint$-arcs $\za$ and $\zb$ in $S$ respectively. Then the following are equivalent
\begin{enumerate}
\item $\P_{(\za,f_\za)}$ and $\P_{(\zb,f_\zb)}$ are in the same $\langle 1 \rangle$-orbit in ${\calz_\calp}$,
\item $\za$ and $\zb$ are in the same $\zg$-smoothing equivalence class,
\item $\za$ and $\zb$ are identified in $(S_\zg,M_\zg)$.
\end{enumerate}
\end{theorem}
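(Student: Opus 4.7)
The plan is to deduce the equivalence \mbox{(2)$\Leftrightarrow$(3)} immediately from the surface-level lemmas already at hand, and then establish \mbox{(1)$\Leftrightarrow$(2)} by a case analysis based on how $\za$ meets $\zg$, using the explicit descriptions of the $\langle 1\rangle$-orbits given in Lemmas \ref{lemma:reduction0}--\ref{lemma:reduction3}. The first move is to note that, since $\P_{(\za,f_\za)}$ and $\P_{(\zb,f_\zb)}$ lie in $\calz$, Lemma \ref{lemma:the objects in z} forces both $\za$ and $\zb$ to be disjoint from $\zg$ in the interior of $S$; thus they meet $\zg$ only (possibly) at endpoints, putting us in the scope of Proposition \ref{proposition:identify arcs}. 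The equivalence \mbox{(2)$\Leftrightarrow$(3)} is then exactly Lemma \ref{lemma:smoothing equivalent-identify}.

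For \mbox{(1)$\Rightarrow$(2)}, I would split on whether $\za$ meets $\zg$ at the boundary. If $\za\cap\zg=\emptyset$, then Lemma \ref{lemma:reduction0} shows that $\langle 1\rangle$ coincides with $[1]$ on the orbit, so every object in the orbit of $\P_{(\za,f_\za)}$ has underlying arc $\za$; hence the underlying arc of $\P_{(\zb,f_\zb)}$ is $\za$, i.e.\ $\zb\in[\za]_\zg=\{\za\}$. If instead $\za$ meets $\zg$ at an endpoint, Lemma \ref{lemma:position}(1) places $\za$ into Case I, II or III of Figure \ref{figure:possible positions between a and g} as one of the arcs $\za_i$. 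After, if necessary, replacing $f_\za$ by a shift (which merely relabels objects in the same orbit), I would normalise so that $f_{\za_1}(s^{\za_1}_{q_1})=f_\zg(s^\zg_{q_1})$. The orbit is then listed explicitly in Lemma \ref{lemma:reduction1}, \ref{lemma:reduction2} or \ref{lemma:reduction3}, and by inspection its underlying arcs run precisely through the arcs $\{\za_1,\za_2\}$ (Case I) or $\{\za_1,\za_2,\za_3,\za_4\}$ (Cases II, III), which by Lemma \ref{lemma:position}(2) is the single equivalence class $[\za]_\zg$.

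For \mbox{(2)$\Rightarrow$(1)}, given $\zb\in[\za]_\zg$, Lemma \ref{lemma:position}(2) identifies $\za$ and $\zb$ as two of the arcs $\za_i$ appearing in the same picture of Figure \ref{figure:possible positions between a and g}, and the relevant orbit-description lemma exhibits objects supported on each such $\za_i$ (with the appropriate gradings) as members of the same $\langle 1\rangle$-orbit. To finish, one must check that every grading $f_\zb$ for which $\P_{(\zb,f_\zb)}\in\calz$ indeed corresponds to an object appearing in this explicit orbit. This is the step I expect to require the most care: it amounts to running through the three subcases in Lemmas \ref{lemma:reduction2} and \ref{lemma:reduction3} (the parameter $m\in\Z$ there exhausts all possibilities for the second endpoint grading) and verifying that the $\calz$-membership conditions determined by $\lp\cap\rp$ at the two boundary intersections pin down exactly one grading per $[1]$-shift class, matching one of the listed entries of the orbit.

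The main obstacle is therefore not conceptual but bookkeeping: confirming that the orbit lists of Lemmas \ref{lemma:reduction1}--\ref{lemma:reduction3} are exhaustive, so that no admissible grading $f_\zb$ producing an object of $\calz$ is missed. Once this is verified, the theorem is a direct consequence of the orbit descriptions together with Proposition \ref{proposition:identify arcs}.
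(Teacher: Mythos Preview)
Your proposal is correct and follows essentially the same approach as the paper, which simply states that the theorem follows by combining Lemmas \ref{lemma:reduction0}--\ref{lemma:reduction3} with Proposition \ref{proposition:identify arcs}. The bookkeeping concern you flag about exhaustiveness is already handled inside those lemmas: the $\Leftrightarrow$ characterisations of $\calz$-membership (e.g.\ $\P_{(\za_1,f_{\za_1})}[i]\in\calz\Leftrightarrow i\le 0$) together with the ``In particular, $\P_{(\za_j,f_{\za_j})}\notin\calz$ for any grading'' clauses guarantee that every object of $\calz$ supported on an arc in $[\za]_\zg$ already appears in the listed orbit, so no further verification is needed.
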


\subsection{A geometric model of silting reduction}\label{subsection:the geometric model of silting reduction}

Recall that we denote by  $\overline{\calz_\calp}$ the orbit category  of the subfactor category $\calz_{\calp}$ by the shift functor, see  subsection \ref{subsection:}.
We will show in the following that the cut marked surface $(S_\zg,M_\zg)$ gives a geometric model of $\overline{\calz_\calp}$.
\begin{definition}
We call a non-zero indecomposable object $\overline{X}$ in $\overline{\calz_\calp}$ a string (resp. band) object, if its representative $X$ in $\calz$ is a string (resp. band) object.
\end{definition}

Note that the above definition is well-defined and any indecomposable object in $\overline{\calz_\calp}$ is either a string object or a band object.

\begin{definition}
Define
$$\Phi : \overline{\calz_\calp} \to (S_\zg,M_\zg)$$
$$\overline{X}\mapsto \za_{\overline{X}}$$
 as the composition of the three correspondences in the following diagram, given by first lifting a non-zero object $\overline{X}\in \overline{\calz_\calp}$ to an object $X\in \calz$ , then identifying it with the corresponding  graded curve $(\za_X,f_X)$ on $(S,M,\Delta_A)$, and finally with the associated  curve $\za_X$ in $(S_\zg,M_\zg)$.
\begin{center}
{\begin{tikzpicture}[scale=0.4]
\draw (-4,0) node {$\calz$};
\draw (6,0) node {$\overline{\calz_\calp}$};
\draw (-4,-5) node {$(S,M)$};
\draw (6,-5) node {$(S_\zg,M_\zg)$};

\draw (-4,1.5) node {$X$};
\draw (6,1.5) node {$\overline{X}$};
\draw (1,0.8) node {$\Phi_1$};
\draw (-4,-6.2) node {$(\za_X,f_{\za_X})$};
\draw (6,-6.3) node {$\za_{\overline{X}}=\za_X$};
\draw (1,-5.8) node {$\Phi_3$};

\draw[->] (-4,-0.5) -- (-4,-4.5);
\draw[dashed,->] (6,-0.5) -- (6,-4.5);
\draw (6.8,-2.5) node {$\Phi$};
\draw (-4.8,-2.5) node {$\Phi_2$};
{\sib \draw[dotted, ->] (-3.3,0) -- (4.5,0);}
\draw[->] (-2,-5) -- (3.2,-5);
\end{tikzpicture}}
\end{center}
\end{definition}

For a non-zero object $\overline{X}$ in $\overline{\calz_\calp}$, by Lemma \ref{lemma:the objects in z}, there is no interior intersection between $\za_X$ and $\zg$, and thus $\za_X$ still exists in $S_\zg$. So $\Phi$ establishes a correspondence from
the non-zero objects in $\overline{\calz_\calp}$ to $\gpoint$-arcs or closed curves on $S_\zg$. On the other hand, note that there are many choices for the representative $X$ of $\overline{X}$. However, we now  show, that the composition $\Phi$ is independent of the choice of representative and therefore it is a  well-defined map.

\begin{lemma}\label{lemma:correspondence between objects and arcs}
The map $\Phi$ is a well-defined injection from the set of isomorphism classes of indecomposable objects in $\overline{\calz_\calp}$ to the set of $\gpoint$-arcs and closed curves in $(S_\zg,M_\zg)$ restricting to a bijection from the set of isomorphism classes of indecomposable string objects in $\overline{\calz_\calp}$ to the set of  $\gpoint$-arcs  in $(S_\zg,M_\zg)$.
\end{lemma}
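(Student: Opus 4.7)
The plan is to establish three facts in turn: well-definedness of $\Phi$, injectivity (on string objects), and surjectivity onto $\gpoint$-arcs; the subtlety concerning band objects I will handle separately.

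\textbf{Well-definedness.} Any two representatives $X, X'$ of $\overline{X} \in \overline{\calz_\calp}$ lie in the same $\langle 1\rangle$-orbit in $\calz_\calp$. For a string representative the underlying arc $\za_X$ does not meet $\zg$ in its interior by Lemma~\ref{lemma:the objects in z}, so $\za_X$ descends to a curve in $(S_\zg,M_\zg)$. Theorem~\ref{theorem:reduction0} identifies the $\langle 1\rangle$-orbit of $X$ with a $\zg$-smoothing equivalence class of arcs, and Lemma~\ref{lemma:smoothing equivalent-identify} together with Proposition~\ref{proposition:identify arcs} shows that any two arcs in this class are identified in $(S_\zg,M_\zg)$. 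For a band representative, the underlying closed curve $\zg'$ is disjoint from $\zg$, so by Lemma~\ref{lemma:reduction0} the $\langle 1\rangle$-orbit consists of $\P_{(\zg',f'[i],\zl)}$ for $i\in\Z$, all of which have the same underlying closed curve. In both cases $\Phi(\overline{X})$ is independent of the representative.

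\textbf{Injectivity on string objects and surjectivity.} For injectivity, suppose $\Phi(\overline{X})=\Phi(\overline{Y})$ for two indecomposable string objects with string representatives $X=\P_{(\za_X,f_X)}$ and $Y=\P_{(\za_Y,f_Y)}$. Then $\za_X$ and $\za_Y$ are identified in $(S_\zg,M_\zg)$, so by Proposition~\ref{proposition:identify arcs} they are $\zg$-smoothing equivalent, which by Theorem~\ref{theorem:reduction0} puts $X$ and $Y$ in the same $\langle 1 \rangle$-orbit, giving $\overline{X}=\overline{Y}$. For surjectivity, given an $\gpoint$-arc $\overline{\za}$ in $(S_\zg,M_\zg)$, I will pick a lift $\za$ in $(S,M)$ that meets $\zg$ at most at its endpoints. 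The possible configurations are exactly the four cases: $\za$ disjoint from $\zg$, or case I, II, III of Figure~\ref{figure:possible positions between a and g}. Lemmas~\ref{lemma:reduction0}, \ref{lemma:reduction1}, \ref{lemma:reduction2} and \ref{lemma:reduction3} exhibit, in each case, an explicit grading $f_\za$ such that $\P_{(\za,f_\za)}\in \calz$, so its class $\overline{\P_{(\za,f_\za)}}$ is a preimage of $\overline{\za}$.

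\textbf{The band part.} An indecomposable band object $\P_{(\zg',f',\zl)}$ with $\zg'$ a gradable closed curve in $\calz$ maps under $\Phi$ to the closed curve $\zg'$ regarded in $(S_\zg,M_\zg)$. Two band objects with the same underlying curve but different scalars $\zl\in k^*$ share the same image, so injectivity is to be read together with the scalar parameter, consistent with the finer statement of Theorem~\ref{thm:prelimilary}. Well-definedness on the band part is already covered above, and the analogue of surjectivity (every gradable closed curve in $(S_\zg,M_\zg)$ lifts to one in $(S,M)$ disjoint from $\zg$ and gives rise to band objects in $\calz$) follows from Lemma~\ref{lemma:reduction0} applied to closed curves.

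\textbf{Main obstacle.} The only non-routine step is surjectivity: checking that in cases II and III (where $\za$ meets $\zg$ at both endpoints) the grading supplied by Lemmas~\ref{lemma:reduction2} and \ref{lemma:reduction3} genuinely produces an object in $\calz$ rather than merely in $\lp$ or $\rp$. This is handled by selecting the grading $f_\za$ with $f_\za(s_{q_1}^\za)=f_\zg(s_{q_1}^\zg)$, for which the relevant lemmas explicitly verify both the $\lp$- and $\rp$-conditions; all remaining verifications are immediate from Theorem~\ref{theorem:object and map in derived category2}.
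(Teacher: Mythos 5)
Your proof is correct and follows essentially the same route as the paper's: well-definedness and injectivity on string objects via Lemma~\ref{lemma:the objects in z} and Theorem~\ref{theorem:reduction0}, and surjectivity by lifting an $\gpoint$-arc from the cut surface and grading it. Your explicit check (via Lemmas~\ref{lemma:reduction0}--\ref{lemma:reduction3}) that the lifted arc admits a grading landing in $\calz$, and your observation that on band objects injectivity holds only up to the scalar $\lambda\in k^*$, are both spelled out more carefully than in the paper's terser treatment of these points.
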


\begin{proof}
For a non-zero indecomposable band object $\overline{X}$ in $\overline{\calz_\calp}$, any representative $X$ is a non-zero object in ${\calz_\calp}$. So by Lemma \ref{lemma:the objects in z}, there is no interior intersection between $\za_X$ and $\zg$, and thus no intersection at all, since $\za_X$ is a closed curve. Thus any two distinct lifts of $\overline{X}$ are associated to the same closed curve $\za_X$ in $S_\zg$. Therefore $\Phi$ is well-defined and injective on the band objects.

On the other hand, Theorem \ref{theorem:reduction0} implies that $\Phi$ is well-defined and injective on the indecomposable string objects.
Furthermore, note that any $\gpoint$-arc in $S_\zg$ is the image of some $\gpoint$-arc in $S$, which is always gradable and gives us an indecomposable string object in $\overline{\calz_\calp}$. So the correspondence $\Phi$ is surjective, and thus bijective, on the indecomposable string objects.
\end{proof}

It follows from the proof above that any closed curve in $(S_\zg,M_\zg)$ can be lifted to a unique closed curve in $(S,M)$.  We define the \emph{ winding number} of a closed curve in $(S_\zg,M_\zg)$ to be zero if the corresponding closed curve in $(S,M,\zD_A)$ has winding number zero.

\begin{theorem}\label{theorem:cutsurface}
Let $A$ be a gentle algebra  with associated surface model $(S,M,\zD_A)$. Set $\calp=add(\P_{(\zg,f_\zg)})$ to be the pre-silting subcategory in $K^b(\proj A)$ arising from a graded $\gpoint$-arc $(\zg,f_\zg)$, where $\zg$ is an $\gpoint$-arc without self-intersections. Let ${\calz_\calp}$ be the silting reduction of $K^b(\proj A)$ by $\calp$, and denote by $\overline{\calz_\calp}$ the orbit category of $\calz_\calp$ by the shift functor. Then
\begin{enumerate}
\item
there is a bijection from the set of isomorphism classes of indecomposable objects in $\overline{\calz_\calp}$ to the set of $\gpoint$-arcs and pairs consisting of a closed curve of winding number zero in $(S_\zg,M_\zg)$ and  a non-zero element in the base field $k$;
\item
under the correspondence in (1), the dimensions of the morphism spaces in $\overline{\calz_\calp}$ coincide with the number of oriented intersections of the corresponding $\gpoint$-arcs and  closed curves  in $(S_\zg,M_\zg)$.  That is, for any indecomposable objects $\overline{X},\overline{Y}\in{{\calz_\calp}}$,
$$\dim\Hom_{\overline{\calz_\calp}}
(\overline{X},\overline{Y})=|\Int_{S_\zg}(\za_X,\za_Y)|,$$
where $\alpha_X$ and $\alpha_Y$ are $\gpoint$-arcs or closed curves corresponding to $\overline{X}$ and $\overline{Y}$ respectively. \end{enumerate}
\end{theorem}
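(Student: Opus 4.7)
\smallskip

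\noindent\textbf{Proof proposal.} For part (1), the bijection for indecomposable string objects is already supplied by Lemma \ref{lemma:correspondence between objects and arcs}. For band objects, I would argue as follows. By Theorem \ref{theorem:object and map in derived category}(2), isomorphism classes of indecomposable band objects in $\cald^b(A)$ are parameterized by triples $(\za,f,\zl)$ with $(\za,f)$ a graded closed curve and $\zl\in k^*$. By Lemma \ref{lemma:the objects in z}, a closed curve on $(S,M)$ lies in $\calz$ precisely when it has no interior intersection with $\zg$, and such a curve lifts uniquely to a closed curve on $(S_\zg,M_\zg)$ and conversely. Since the lift of the dual dissection $\zD_A^*$ to $(S_\zg,M_\zg)$ induces the corresponding grading functions, the condition that a closed curve be gradable coincides on both sides with the vanishing of the winding number (see \cite{APS19,HKK17,LP20}). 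Finally, passing to $\overline{\calz_\calp}$ identifies objects differing by shifts of the grading, which matches the fact that a non-primitive closed curve in $(S_\zg,M_\zg)$ is recovered from a primitive curve together with an integer $n\geq 1$ encoded in the dimension of the $k[x,x^{-1}]$-module. Combined with Lemma \ref{lemma:correspondence between objects and arcs}, this gives the stated bijection.

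For part (2), I would expand the morphism space in $\overline{\calz_\calp}$ via the equivalence $\bar\rho\colon\calz_\calp\xrightarrow{\simeq}\calk/\thick\calp$ of Theorem \ref{theorem:equivalence}:
\[
 \dim\Hom_{\overline{\calz_\calp}}(\overline{X},\overline{Y})=\sum_{i\in\Z}\dim\Hom_{\calz_\calp}(X,Y\langle i\rangle)=\sum_{i\in\Z}\dim\Hom_{\calk/\thick\calp}(X,Y[i]).
\]
Using Lemmas \ref{lemma:reduction0}, \ref{lemma:reduction1}, \ref{lemma:reduction2} and \ref{lemma:reduction3}, the $\langle 1\rangle$-orbit of $Y$ in $\calz_\calp$ can be listed as a union of intervals of shifts $Y_k[j]$, where $Y_k$ ranges over the lifts $\P_{(\za_k,f_{\za_k})}$ of the various $\gpoint$-arcs $\za_k$ in the $\zg$-smoothing equivalence class $[\za_Y]_\zg$. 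Summing over $i$ and reorganizing, the left hand side equals
\[
 \sum_{\za_k\in[\za_Y]_\zg}\dim\Hom_{\calk/\thick\calp}^{\bullet}(\P_{(\za_X,f_{\za_X})},\P_{(\za_k,f_{\za_k})}),
\]
where by Theorem \ref{theorem:object and map in derived category2}(2) the $\calk$-version of each summand equals $|\Int_S(\za_X,\za_k)|$. Thus, it suffices to quantify how many basis morphisms in $\Hom_{\calk}^{\bullet}(\P_{(\za_X,f_{\za_X})},\P_{(\za_k,f_{\za_k})})$ factor through $\thick\calp$ (equivalently, vanish in $\calk/\thick\calp$), and match the remaining intersections with $|\Int_{S_\zg}(\za_X,\za_Y)|$.

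The matching splits into three geometric situations. Interior intersections of $\za_X$ with $\za_k$ which do not lie on $\zg$ persist unchanged after cutting, and by \cite{ALP16} the associated morphisms in $\calk$ do not factor through $\P_\zg$, so they contribute to both sides equally. Boundary intersections at a $\gpoint$-point which is not an endpoint of $\zg$ are likewise preserved by cutting and by the quotient. The delicate case is a boundary intersection at an endpoint $p$ of $\zg$: here cutting replaces $p$ by the two new marked points $pq$ and $p'q'$ on the sides of $\zg$, so one needs to verify that the intersections of $\za_X$ and the various lifts $\za_k$ at $p$ in $(S,M)$ organize, after removing those that factor through $\P_\zg$, into precisely the intersections at $pq$ and $p'q'$ in $(S_\zg,M_\zg)$. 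This is the main obstacle, and I would handle it by a case analysis following the three configurations of Figure \ref{figure:possible positions between a and g}: for each $\za_k\in[\za_Y]_\zg$, the three possibilities (at most one map, two maps on opposite sides of $\zg$, or maps factoring through $\calp$) in Definition \ref{definition:oriented intersections} have to be cross-referenced with the anti-clockwise ordering of half-edges at $pq$ and $p'q'$ described in Definition \ref{definition:cut surface}. The resulting numerical match completes the proof, and loops or self-intersections at $p$ contributing several elements to $\Int$ are handled by the same counting applied to each half-edge.
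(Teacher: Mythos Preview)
Your argument for part (1) is essentially the same as the paper's and is fine.

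For part (2), however, the ``reorganization'' step contains a genuine error. You correctly note that the $\langle1\rangle$-orbit of $Y$ consists of \emph{intervals} of $[1]$-shifts of the objects $\P_{(\za_k,f_{\za_k})}$, one interval for each $\za_k\in[\za_Y]_\zg$. But then you claim that
\[
\sum_{i\in\Z}\dim\Hom_{\calz_\calp}(X,Y\langle i\rangle)=\sum_{\za_k\in[\za_Y]_\zg}\dim\Hom_{\calk/\thick\calp}^{\bullet}(\P_{(\za_X,f_{\za_X})},\P_{(\za_k,f_{\za_k})}),
\]
which replaces each interval by a full sum over all shifts. This cannot be right: in $\calk/\thick\calp$ the various $\P_{(\za_k,f_{\za_k})}$ are all shifts of one another (precisely because they lie in the same $\langle1\rangle$-orbit, and $\bar\rho$ is a triangle equivalence), so the right-hand side equals $\big|[\za_Y]_\zg\big|\cdot\dim\Hom_{\overline{\calz_\calp}}(\overline{X},\overline{Y})$. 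Whenever $[\za_Y]_\zg$ has more than one element you overcount by that factor, and the subsequent plan of ``subtracting morphisms that factor through $\thick\calp$'' cannot repair this.

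What the paper actually does is keep the honest interval decomposition in $\calz_\calp$ (not in the Verdier quotient), namely
\[
\bigoplus_{i\le0}\Hom_{\calk}(\P_{(\za,f_\za)},\P_{(\za',f_{\za'})}[i])\ \oplus\ \bigoplus_{i\ge0}\Hom_{\calk}(\P_{(\za,f_\za)},\P_{(\zb',f_{\zb'})}[i]),
\]
and then proves a \emph{bijection} between $\bigoplus_{i\ge0}\Hom_\calk(\P_\za,\P_{\zb'}[i])$ and $\bigoplus_{i\ge1}\Hom_\calk(\P_\za,\P_{\za'}[i])$ using the distinguished triangle on the surface $\triangle(\zg,\zb',\za')$ and the grading shift $f_{\zb'}(s^{\zb'}_{q_4})=f_{\za'}(s^{\za'}_{q_4})-1$. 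This telescopes the two half-sums into a single full sum $\Hom_\calk^\bullet(\P_\za,\P_{\za'})$ for \emph{one} representative, which then equals $|\Int_S(\za,\za')|=|\Int_{S_\zg}(\za,\za')|$. In the second subcase (where $\za$ and $\za'$ share endpoints on opposite ends of $\zg$) one must additionally quotient out the single morphism $ac$ that factors through $\calp$, and the paper checks the intersection count drops by exactly one to compensate. This bijection between the complementary intervals is the missing idea in your proposal; the case analysis you sketch at the end about half-edges at $pq$ and $p'q'$ is not a substitute for it.
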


\begin{proof}
 The proof of the first part for $\gpoint$-arcs directly follows from Lemma \ref{lemma:correspondence between objects and arcs}. For the case of  closed curves, it is enough to remark that  any band object in $\overline{\calz_\calp}$ uniquely corresponds to a pair consisting of a gradable  primitive closed curve and an indecomposable $k[x,x^{-1}]$-module, which in turn by Theorem \ref{theorem:object and map in derived category} corresponds to a pair consisting of a closed curve of winding number zero and a non-zero element in $k$. We now prove the second part.
Let $\overline{X}$ and $\overline{Y}$ be two indecomposable objects in $\overline{\calz_\calp}$.

Assume first that  the associated curves $\za_X$ and $\za_Y$ do not intersect $\zg$.  Then there are no morphisms between $\P_{(\zg,f_\zg)}$ and $X[i]$, and between $\P_{(\zg,f_\zg)}$ and $Y[i]$, for any $i \in \mathbb{Z}$.
Moreover, by Lemma \ref{lemma:reduction0}, on the objects arising from $\za_X$ and $\za_Y$ the shift functor $\langle 1 \rangle$ in $\calz_\calp$ coincides with $[1]$. Thus
$$\Hom_{{\calz_\calp}}(X,Y\langle i \rangle)=\Hom_\calk(X,Y[i])$$
for any $i \in \mathbb{Z}.$ So
$$\Hom_{\overline{\calz_\calp}}(\overline{X},\overline{Y})
=\Hom_{\calk}^{\bullet}(X,Y)$$
and, in particular,
$$\dim\Hom_{\overline{\calz_\calp}}(\overline{X},\overline{Y})
=\dim\Hom_{\calk}^{\bullet}(X,Y).$$
Furthermore, the intersections between $\za_X$ and $\za_Y$ remain unchanged after cutting the surface, so
$$|\Int_{S}(\za_X,\za_Y)|
=|\Int_{S_\zg}(\za_X,\za_Y)|.$$
On the other hand, by Theorem \ref{theorem:object and map in derived category2} (2),
$\dim\Hom_{\calk}^{\bullet}(X,Y)=|\Int_{S}(\za_X,\za_Y)|$.
Therefore we have
$$\dim\Hom_{\overline{\calz_\calp}}(\overline{X},\overline{Y})
=|\Int_{S_\zg}(\za_X,\za_Y)|.$$

 Now assume that at least one of $\za_X$ and $\za_Y$ intersects $\zg$ but that their intersection with  $\zg$ is only at their endpoints. Then  $\za_X$ and $\za_Y$ are both $\gpoint$-arcs, and there are several possibilities depending on the relative positions of $\za_X$, $\za_Y$ and $\zg$, see Figure \ref{figure:possible positions between a and g}. In the following we give a detailed proof of the case when both $\za_X$ and $\za_Y$ are  arcs such as in case I of Figure \ref{figure:possible positions between a and g}, that is,  both of them intersect  $\zg$. The other cases are proved in a similar way.

 Case I of Figure \ref{figure:possible positions between a and g} splits into two subcases as described in Figure \ref{figure:morphisms}.
  For convenience, we will denote $\za_X$ by $\za$ and denote $\za_Y$ by $\za^\prime$. We also assume that $X=\P_{(\za,f_\za)}$ and $Y=\P_{(\za^\prime,f_{\za^\prime})}$, where $f_\za$ and $f_{\za^\prime}$ are gradings of $\za$ and $\za'$ respectively  such that we have the following maps $a:\P_{(\za,f_\za)}\rightarrow \P_{(\za^\prime,f_{\za^\prime})}$ and $b:\P_{(\za^\prime,f_{\za^\prime})}\rightarrow \P_{(\zg,f_\zg)}$ in subcase I, and maps $a:\P_{(\za,f_\za)}\rightarrow \P_{(\zg,f_\zg)}$ and $b:\P_{(\za^\prime,f_{\za^\prime})}\rightarrow \P_{(\zg,f_\zg)}$ in subcase II.

\begin{figure}[H]
\begin{center}
{\begin{tikzpicture}[scale=0.4]
\draw[green] (0,0) circle [radius=0.2];
\draw[green] (0,5) circle [radius=0.2];
\draw[green] (5,0) circle [radius=0.2];
\draw[green] (5,5) circle [radius=0.2];

\draw[-] (0,0.2) -- (0,4.8);
\draw[-,blue] (0.2,5) -- (4.8,5);
\draw[-] (0.2,0) -- (4.8,0);

\draw[-,blue] (0.15,4.85) -- (4.85,0.15);
\draw[-] (0.15,0.15) -- (4.85,4.85);

\draw[blue] (4.5,1.6) node {$\zb$};
\draw (5.7,3.6) node {$\za^\prime {\sib (= \za_Y})$};
\draw (2.3,-.5) node {$\za {\sib (= \za_X)}$};
\draw[blue] (2.3,5.6) node {$\zb^\prime$};
\draw (-.5,2.5) node {$\zg$};

\draw (-.6,5.5) node {$q_2$};
\draw (-.5,-.5) node {$q_1$};
\draw (5.5,-.5) node {$q_3$};
\draw (5.5,5.5) node {$q_4$};

\draw (0.5,1.1) node {$b$};
\draw (1.1,0.5) node {$a$};

\draw[->] (0.5,0.5) to [out=135,in=0] (0,0.7);
\draw[->](.7,0) to [out=90,in=-45] (.5,.5);

\draw (2.5,-2) node {subcase I};
\end{tikzpicture}}
\quad\quad
{\begin{tikzpicture}[scale=0.4]
\draw[green] (0,0) circle [radius=0.2];
\draw[green] (0,5) circle [radius=0.2];
\draw[green] (5,0) circle [radius=0.2];
\draw[green] (5,5) circle [radius=0.2];

\draw[-,blue] (0,0.2) -- (0,4.8);
\draw[-,blue] (5,0.2) -- (5,4.8);
\draw[-] (0.2,5) -- (4.8,5);
\draw[-] (0.2,0) -- (4.8,0);
\draw[-] (0.15,0.15) -- (4.85,4.85);

\draw (2.3,-.5) node {$\za {\sib (= \za_X)}$};
\draw (2.3,5.6) node {$\za^\prime {\sib( = \za_Y)}$};
\draw (2,2.5) node {$\zg$};
\draw[blue] (-.5,2.5) node {$\zb^\prime$};
\draw[blue] (5.5,2.5) node {$\zb$};
\draw (-.6,5.5) node {$q_4$};
\draw (-.5,-.5) node {$q_1$};
\draw (5.5,-.5) node {$q_3$};
\draw (5.5,5.5) node {$q_2$};

\draw (1.1,0.5) node {$a$};
\draw (3.9,4.5) node {$b$};
\draw (0.5,1.1) node {$c$};

\draw[->] (0.5,0.5) to [out=135,in=0] (0,0.7);
\draw[->](.7,0) to [out=90,in=-45] (.5,.5);
\draw[->](4.3,5) to [out=270,in=135] (4.5,4.5);
\draw (2.5,-2) node {subcase II};
\end{tikzpicture}}
\end{center}
\begin{center}
\caption{$\gpoint$-arcs $\za$ and $\za^\prime$ which intersect with $\zg$ at endpoints. The points $q_3$ and $q_4$ may coincide.}\label{figure:morphisms}
\end{center}
\end{figure}
Recall from the Lemma \ref{lemma:reduction1}, that the $\langle1\rangle$-orbits of $\P_{(\za,f_{\za})}$ and $\P_{(\za',f_{\za'})}$ are respectively
\begin{equation*}
\cdots\s{\langle1\rangle}
\longrightarrow \P_{(\za,f_\za)}[-1]\s{\langle1\rangle}
\longrightarrow \P_{(\za,f_{\za})}\s{\langle1\rangle}
\longrightarrow {\P_{(\zb,f_{\zb})}}\s{\langle1\rangle}
\longrightarrow \P_{(\zb,f_{\zb})}[1]\s{\langle1\rangle}
\longrightarrow\cdots
\end{equation*}
and
\begin{equation*}
\cdots\s{\langle1\rangle}
\longrightarrow \P_{(\za^\prime,f_{\za^\prime})}[-1]\s{\langle1\rangle}
\longrightarrow \P_{(\za^\prime,f_{\za^\prime})}\s{\langle1\rangle}
\longrightarrow {\P_{(\zb^\prime,f_{\zb^\prime})}}\s{\langle1\rangle}
\longrightarrow \P_{(\zb^\prime,f_{\zb^\prime})}[1]\s{\langle1\rangle}
\longrightarrow\cdots.
\end{equation*}

Subcase I. We have the following
$$\begin{array}{rcl}
 & &\Hom_{\overline{\calz_\calp}}(\overline{\P_{(\za,f_{\za})}},\overline{\P_{(\za^\prime,f_{\za^\prime})}})\\
 & = &
 \underset{i=-\infty}{\overset{\infty}{\bigoplus}}\Hom_{{\calz_\calp}}(\P_{(\za,f_{\za})},\P_{(\za^\prime,f_{\za^\prime})}\langle i \rangle) \\
& = & \underset{i=-\infty}{\overset{0}{\bigoplus}}\Hom_{{\calz_\calp}}(\P_{(\za,f_{\za})},\P_{(\za^\prime,f_{\za^\prime})}[i])\oplus
\underset{i=0}{\overset{\infty}{\bigoplus}}\Hom_{{\calz_\calp}}(\P_{(\za,f_{\za})},\P_{(\zb^\prime,f_{\zb^\prime})}[i]) \\
& = & \underset{i=-\infty}{\overset{0}{\bigoplus}}\Hom_{\calk}(\P_{(\za,f_{\za})},\P_{(\za^\prime,f_{\za^\prime})}[i])\oplus
\underset{i=0}{\overset{\infty}{\bigoplus}}\Hom_{\calk}(\P_{(\za,f_{\za})},\P_{(\zb^\prime,f_{\zb^\prime})}[i]).
\end{array}$$

The last equality holds since we know from Figure \ref{figure:morphisms} that in this case no morphism in the two  sums in the last line  will  factor through an object in $\calp$.

 We claim that there is a bijection between the space
$$
\underset{i=0}{\overset{\infty}{\bigoplus}}\Hom_{\calk}(\P_{(\za,f_{\za})},
\P_{(\zb^\prime,f_{\zb^\prime})}[i])
$$
and the space
$$
\underset{i=1}{\overset{\infty}{\bigoplus}}\Hom_{\calk}(\P_{(\za,f_{\za})},
\P_{(\za^\prime,f_{\za^\prime})}[i]),
$$
and thus there is a bijection between the last sum of spaces in the above equation and the space
$$\underset{i=-\infty}{\overset{\infty}{\bigoplus}}
\Hom_{\calk}(\P_{(\za,f_{\za})},\P_{(\za^\prime,f_{\za^\prime})}[i]).$$

Thus finally $$\dim\Hom_{\overline{\calz_\calp}}(\overline{\P_{(\za,f_{\za})}},
\overline{\P_{(\za^\prime,f_{\za^\prime})}})
=\dim\Hom_{\calk}^{\bullet}(\P_{(\za,f_{\za})},
\P_{(\za^\prime,f_{\za^\prime})}).$$

To prove this claim it is enough to show that for any $i\geq 0$, there is an one-to-one correspondence between $\Hom_{\calk}(\P_{(\za,f_{\za})},\P_{(\zb^\prime,f_{\zb^\prime})}[i])$ and $\Hom_{\calk}(\P_{(\za,f_{\za})},\P_{(\za^\prime,f_{\za^\prime})}[i+1])$. However, this directly follows from the following two observations:

\medskip

(1) By assumption $\zb^\prime$ is the smoothing of $\za^\prime$ and $\zg$. Thus if  $\za$ intersects  $\za^\prime$ at some point $_\za p_{\za^\prime}$  different from $q_1$
then $\za$ must also intersect $\zb^\prime$  at some point $_\za p_{\zb^\prime}$. Conversely, every intersection of $\za$ with $\zb'$  implies that there is an intersection of $\za$ with $\za'$ distinct from $q_1$. Thus we have an one-to-one correspondence between the following sets, which maps $_\za p_{\za^\prime}$ to $_\za p_{\zb^\prime}$:
\begin{center}
\{oriented intersections  from $\za$ to $\za^\prime$  excepting the intersection at $q_1$\}

 $\stackrel{1-1}{\longleftrightarrow}$ \{oriented intersections from $\za$ to $\zb^\prime$\}.
\end{center}

(2)  Note that, if we denote by   $s^{\zb^\prime}_{q_4}$, respectively $s^{\za^\prime}_{q_4}$,  the first intersection of $\zb$, respectively of $\za$, with $\Delta^*$, then $f_{\zb^\prime}(s^{\zb^\prime}_{q_4})=f_{\za^\prime}(s^{\za^\prime}_{q_4})-1$. This follows from the fact that   $\triangle(\zg,\zb',\za')$ is a distinguished triangle and from the existence of the map $b$ in the left picture of Figure \ref{figure:morphisms}. So we have $f_{\zb^\prime}(_\za p_{\zb^\prime})=f_{\za^\prime}(_\za p_{\za^\prime})-1$, and $_\za p_{\zb^\prime}$ induces a map from $\P_{(\za,f_{\za})}$ to $\P_{(\zb^\prime,f_{\zb^\prime})}[i]$ if and only if
$_\za p_{\za^\prime}$ induces a map from $\P_{(\za,f_{\za})}$ to $\P_{(\za,f_{\za^\prime})}[i+1].$
On the other hand, the intersection of $\za$ and $\za'$ at $q_1$, gives us a map in $\Hom_{\calk}(\P_{(\za,f_{\za})},\P_{(\za^\prime,f_{\za^\prime})})$. However, note that we only consider the space $\Hom_{\calk}(\P_{(\za,f_{\za})},\P_{(\za^\prime,f_{\za^\prime})}[i+1])$ for $i\geq 0$ in the claim.

To sum up, we have
$$\dim\Hom_{\overline{\calz_\calp}}(\overline{X},\overline{Y})
=\dim\Hom_{\calk}^{\bullet}(X,Y)=|\Int_{S}(\za,\za^\prime)|.$$
On the other hand, note that in this case we have
$$|\Int_{S}(\za,\za^\prime)|=|\Int_{S_{\zg}}(\za,\za^\prime)|.$$ Therefore
$$\dim\Hom_{\overline{\calz_\calp}}(\overline{X},\overline{Y})=|\Int_{S_{\zg}}(\za,\za^\prime)|.$$

Subcase II. Similar to the subcase I, we have equalities
$$\begin{array}{rcl}
& &\Hom_{\overline{\calz_\calp}}(\overline{\P_{(\za,f_{\za})}},\overline{\P_{(\za^\prime,f_{\za^\prime})}})\\
& = &
\underset{i=-\infty}{\overset{\infty}{\bigoplus}}\Hom_{{\calz_\calp}}(\P_{(\za,f_{\za})},\P_{(\za^\prime,f_{\za^\prime})}\langle i \rangle) \\
& = & \underset{i=-\infty}{\overset{0}{\bigoplus}}\Hom_{{\calz_\calp}}(\P_{(\za,f_{\za})},\P_{(\za^\prime,f_{\za^\prime})}[i])\oplus
\Hom_{{\calz_\calp}}(\P_{(\za,f_{\za})},\P_{(\zb^\prime,f_{\zb^\prime})})\oplus \\
&   &
~~\underset{i=1}{\overset{\infty}{\bigoplus}}\Hom_{{\calz_\calp}}(\P_{(\za,f_{\za})},\P_{(\zb^\prime,f_{\zb^\prime})}[i])\\
& = & \underset{i=-\infty}{\overset{0}{\bigoplus}}\Hom_{\calk}(\P_{(\za,f_{\za})},\P_{(\za^\prime,f_{\za^\prime})}[i])\oplus
\Hom_{\calk}(\P_{(\za,f_{\za})},\P_{(\zb^\prime,f_{\zb^\prime})})/\langle ac \rangle\oplus \\
&   &
~~\underset{i=1}{\overset{\infty}{\bigoplus}}\Hom_{\calk}(\P_{(\za,f_{\za})},\P_{(\zb^\prime,f_{\zb^\prime})}[i]).
\end{array}$$
To show the validity of the last equality, one notices that the only intersections between $\za$, $\za'$, $\zb$ and $\zb'$ with $\zg$ are the ending points, so the linear extensions of the composition $ac$ are the only maps in the above Hom-space  which factor through an object in $\calp$.

Then similar to the above statement for case I, for any $i\leq 0$, there is an one-to-one correspondence between $\Hom_{\calk}(\P_{(\za,f_{\za})},\P_{(\zb^\prime,f_{\zb^\prime})}[i])$ and $\Hom_{\calk}(\P_{(\za,f_{\za})},\P_{(\za^\prime,f_{\za^\prime})}[i+1])$.
Therefore we have an one-to-one correspondence between
$\Hom_{\overline{\calz_\calp}}(\overline{\P_{(\za,f_{\za})}},\overline{\P_{(\za^\prime,f_{\za^\prime})}})$
and
$\Hom_{\calk}^{\bullet}(\P_{(\za,f_{\za})},\P_{(\zb^\prime,f_{\zb^\prime})})/\langle ac \rangle.$
Thus
$$
\dim\Hom_{\overline{\calz_\calp}}(\overline{X},\overline{Y})=
|\Int_{S}(\za,\zb^\prime)|-1=|\Int_{S}(\za,\za^\prime)|$$
On the other hand, note that
$|\Int_{S}(\za,\za^\prime)|=|\Int_{S_{\zg}}(\za,\za^\prime)|.$ Therefore
$$\dim\Hom_{\overline{\calz_\calp}}(\overline{X},\overline{Y})=|\Int_{S_{\zg}}(\za,\za^\prime)|.$$

\end{proof}

\end{document}